\begin{document}

\title{The Monadic Tower for $\infty$-Categories}
\maketitle

\begin{abstract}
    Every right adjoint functor between presentable $\infty$-categories is shown to decompose canonically as a \textit{coreflection}, followed by, possibly transfinitely many,  \textit{monadic functors}. Furthermore, the coreflection part is given a presentation in terms of a functorial iterated colimit.  Background material, examples, and the relation to homology localization and completion are discussed as well. 
\end{abstract}

\begin{figure}[H]
    \centering{}
    \includegraphics[scale=0.129]{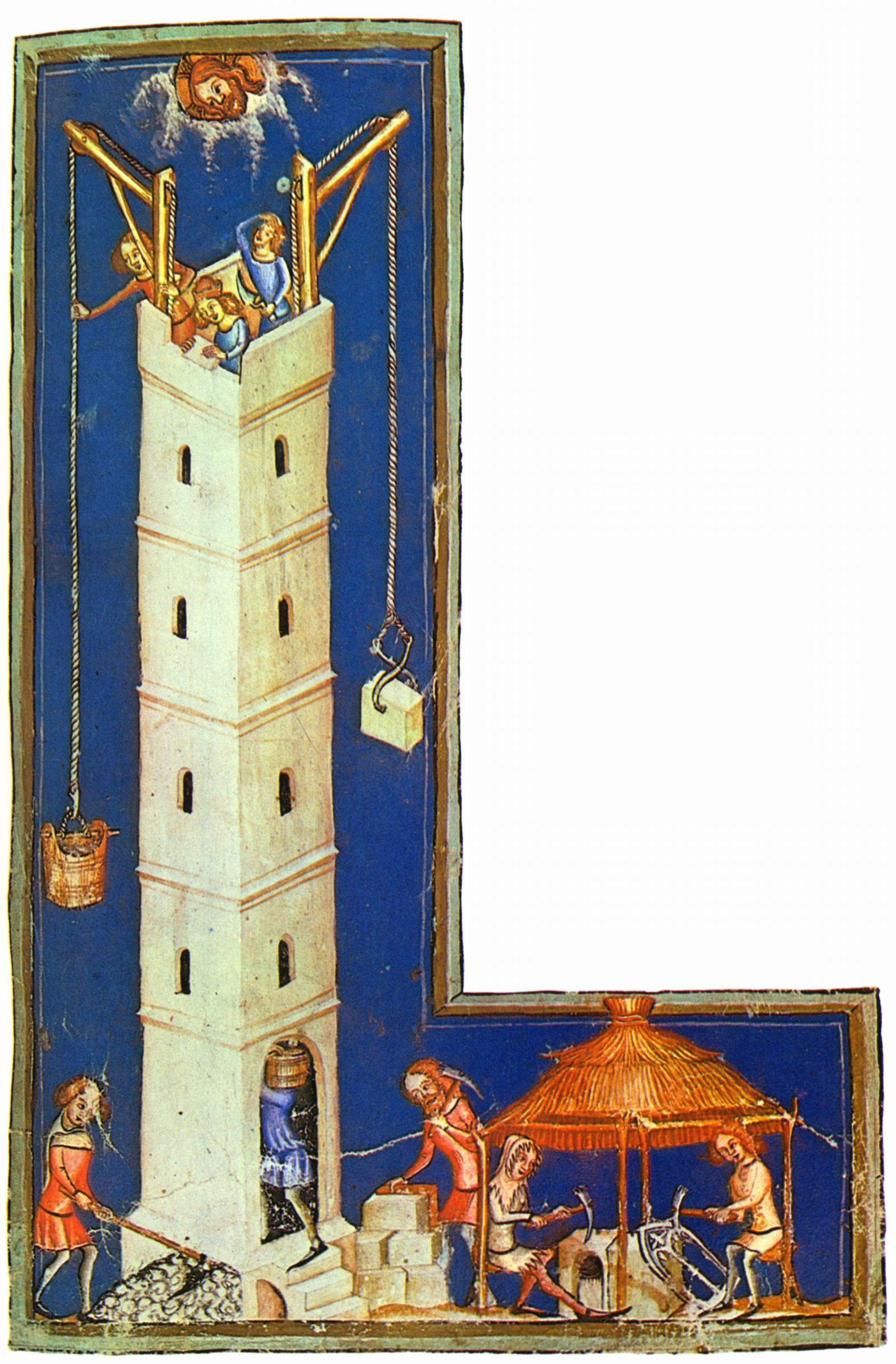}
    \caption*{Weltchronik in Versen, Szene: \textit{Der Turmbau zu Babel} (c. 1370s). Depiction of the construction of the tower of Babel.}
\end{figure}

\tableofcontents{}

\section{Introduction}
\subsubsection{Background}

The theory of \textit{monads} is a general categorical framework that axiomatizes the notion of an \textit{algebraic structure}. 
Loosely speaking, a monad $T$ on an $\infty$-category $\cC$ encodes a collection of formal operations with ``arities in $\cC$'' and certain relations among their compositions. 
For an object $X\in\cC$, a structure of a $T$-algebra on $X$ is a realization of this collection of \textit{formal} operations as \textit{actual} operations on $X$, such that the required relations hold (see \cite{berger2012monads}, for an elaboration of this perspective). This framework encompasses many of the familiar algebraic structures in ordinary and $\infty$-category theory, including, for example, all those which arise from $\infty$-operads.\footnote{It also includes some less ``algebraically looking'' examples, such as the structure of a compact Hausdorff topology on a set, which arises via the \textit{ultrafilter monad} (see, for example, \cite{leinster2013codensity}).} 

The functor from the $\infty$-category of $T$-algebras back to $\cC$, which forgets the $T$-algebra structure, is always a conservative right adjoint. A functor to $\cC$ that is equivalent to such a forgetful functor, for some monad $T$ on $\cC$, is called \textit{monadic}. The famous \textit{monadicity theorem} (due to Barr-Beck for ordinary categories and Lurie for $\infty$-categories) characterizes the monadic functors among conservative right adjoints as those which in addition preserve a certain special type of colimits. This gives an intrinsic criterion for recognizing when an abstract functor is one which ``forgets algebraic structure''. It is interesting to note that monadic functors are not closed under composition. The following is a classical example:

\begin{exa}\label{Ex_Cat}
    Let $\Cat$ be the ordinary category of (small) categories and functors and let $\Grph$ be the ordinary category of (small) reflexive directed multi-graphs. The functor $G''\colon \Cat \to \Grph$, that forgets the composition and remembers only the  objects, morphisms, and identities, is monadic. Similarly, the functor $G'\colon \Grph \to \Set$ that takes a reflexive directed multi-graph to the set of its edges, is monadic as well. However, the composition
    \[
        \begin{tikzcd}
        	\Cat & \Grph & \Set,
        	\arrow["{G''}", from=1-1, to=1-2]
        	\arrow["{G'}", from=1-2, to=1-3]
        	\arrow["G"', curve={height=15pt}, from=1-1, to=1-3]
        \end{tikzcd}
    \]
    is \textit{not} monadic.
\end{exa}

The ``two-step monadicity'' of the functor $G\colon \Cat \to \Set$ in the above example can be understood as follows. A category can be thought of as consisting of a set $A$ of ``morphisms'' endowed with an additional structure. First, one has the unary operators $s,t\colon A \to A$, which map a given morphism to the identity morphisms of its source and target respectively, and which satisfy
\[
    s^2=s\;,\;ts=s\;,\;t^2=t\;,\;st=t.
\]
This structure makes $A$ precisely into a reflexive directed multi-graph (on the set of identity morphisms). Having that, one has the binary composition operation
\[
    \circ \colon 
    A\; {}_s\!\!\times_t A \longrightarrow A,
\]
which is required to satisfy the associativity and unitality identities.
The failure of the forgetful functor 
$G\colon \Cat \to \Set$
to be monadic is closely related to the fact that this composition operation is only \textit{partially defined}, as the source and target operators $s$ and $t$ are  needed to specify its domain of definition. In general, a composition of several monadic functors can still be thought of as encoding an \textit{essentially algebraic structure} on the target (in the sense of \cite{freyd1972aspects}). Namely, one that is given by a collection of recursively \textit{partially defined} operations, whose domain and range are specified by equations involving previously defined operations.

\subsubsection{Main results}

Working backwards, under mild cocompleteness assumptions, every right adjoint functor can be universally approximated by a monadic one. By iterating this process transfinitely, we get a canonical factorization of every right adjoint functor through a transfinite composition of monadic functors. More precisely, 
given a right adjoint functor $G\colon\cD \to \cC$  of (large) $\infty$-categories, such that $\cD$ admits small sifted colimits, we obtain a diagram of $\infty$-categories under $\cD$ indexed by the linearly ordered (large) set of small ordinals
\[
    \cD \lto 
    (\dots \lto \cC_\alpha \lto \dots \lto \cC_2 \lto \cC_1 \lto \cC_0 = \cC).
\]
This diagram satisfies, in particular, that for every ordinal $\alpha$, the functor 
$\cC_{\alpha+1} \to \cC_\alpha$ is monadic and for every limit ordinal $\alpha$, we have 
$\cC_\alpha \iso  \invlim_{\beta<\alpha}\cC_\beta.$
We refer to this construction as the \textit{monadic tower} of $G$ (see \Cref{Def_Monadic_Tower} for details). 

To study the \textit{convergence} of the monadic tower, we further impose the assumption of \textit{presentability}. The main goal of this paper is to prove the following:
\begin{thm*}[Monadic Convergence, {\ref{Monadic_Convergence}}]
    Let $G\colon\cD \to \cC$ be a functor between presentable $\infty$-categories, which admits a left adjoint $F$.
    \begin{enumerate}
        \item The monadic tower of $G$ stabilizes for $\alpha\gg0$ on $\cC_\infty := \invlim_\alpha \cC_\alpha$.
        \item The induced functor 
            \(
                \cD \to \cC_\infty
            \)
            admits a fully faithful left adjoint $\cC_\infty \into \cD$.
        \item  The image of $\cC_\infty \into \cD$ is the subcategory $\cD_\infty\sseq \cD$ generated under colimits by $F(\cC)\sseq \cD$. 
    \end{enumerate}
\end{thm*}

In particular, if the functor $G$ is \textit{conservative}, then the coreflection $G_\infty\colon\cD \to \cD_\infty$ is an \textit{equivalence}. That is, the monadic tower of $G$ converges to $G$ itself. Thus, we get the following:
\begin{corollary}[Transfinite Monadicity, \ref{Trans_Monad}]
    A right adjoint functor between presentable $\infty$-categories is conservative, if and only if it is a transfinite composition of monadic functors.
\end{corollary}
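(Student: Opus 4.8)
The plan is to deduce both implications from the Monadic Convergence theorem together with elementary stability properties of conservative functors. The substantive analytic input (the stabilization and the coreflection) is exactly what the theorem supplies, so in each direction the genuinely new work is light.

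For the forward implication, assume $G$ is conservative and apply Monadic Convergence. Parts (1)--(3) produce the stable stage $\cC_\infty = \invlim_\alpha \cC_\alpha$, a fully faithful left adjoint $i\colon \cC_\infty \into \cD$ to the induced functor $L\colon \cD \to \cC_\infty$, and an identification of its essential image with the colimit-closure $\cD_\infty \sseq \cD$ of $F(\cC)$. The only point to verify is that conservativity forces $\cD_\infty = \cD$, i.e.\ that the coreflection is an equivalence. I would argue this through the counit $iL(d) \to d$: for every $c\in\cC$ the object $F(c)$ lies in $\cD_\infty$, so the universal property of the coreflection makes $\mathrm{Map}_\cD(F(c), iL(d)) \to \mathrm{Map}_\cD(F(c), d)$ an equivalence; transposing along $F \dashv G$ identifies this with the map induced by $G(iL(d)) \to G(d)$, and letting $c$ range over $\cC$ and invoking the Yoneda lemma shows that $G$ sends the counit to an equivalence. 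Since $G$ is conservative, the counit is itself an equivalence for every $d$, whence $\cD_\infty = \cD$ and $L$ is an equivalence. The composite $\cD \simeq \cC_\infty = \invlim_\alpha \cC_\alpha \to \dots \to \cC_1 \to \cC_0 = \cC$ then exhibits $G$ as a transfinite composition of monadic functors. This is precisely the content of the remark following the theorem, which I may simply cite.

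For the converse, suppose $G$ is presented as such a tower, so that $\cD \simeq \invlim_\alpha \cC_\alpha$ with each transition $\cC_{\alpha+1} \to \cC_\alpha$ monadic, each limit stage the inverse limit of the preceding ones, and $G$ the projection $p_0$ to $\cC_0 = \cC$. I would show $G$ is conservative by a transfinite induction establishing that, for a morphism $f$ in $\cD$ with $G(f)$ invertible, every projection $p_\alpha(f)$ is an equivalence. The base case is the hypothesis. At a successor $\alpha+1$, the transition functor is monadic, hence conservative, and since it carries $p_{\alpha+1}(f)$ to $p_\alpha(f)$, invertibility of the latter gives invertibility of the former. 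At a limit ordinal $\alpha$, the projections of $\cC_\alpha = \invlim_{\beta<\alpha}\cC_\beta$ are jointly conservative and carry $p_\alpha(f)$ to the various $p_\beta(f)$, so the inductive hypothesis yields invertibility of $p_\alpha(f)$. Since the total family $\{p_\alpha\}$ is jointly conservative on $\cD \simeq \invlim_\alpha \cC_\alpha$, the morphism $f$ is itself an equivalence, and $G$ is conservative.

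I expect the main (though modest) obstacle to lie entirely in the converse, namely in pinning down the precise meaning of ``transfinite composition of monadic functors'' as a single functor and in verifying that equivalences are jointly detected by the projections of the limit $\infty$-categories. This is a standard fact about limits in $\widehat{\Cat}_\infty$, but it must be invoked carefully both at each individual limit stage and for the total tower, since conservativity of a limit of functors is exactly what propagates invertibility up the ordinal filtration. The forward direction, by contrast, reduces to the single Yoneda/adjunction computation showing that conservativity collapses the coreflection onto all of $\cD$.
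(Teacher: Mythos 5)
Your proposal is correct and takes essentially the same route as the paper: the forward direction applies the Monadic Convergence theorem and checks that conservativity of $G$ forces the coreflection $\cD \to \cD_\infty$ to be an equivalence (the paper simply cites that a conservative coreflection is an equivalence, where you unwind this via the counit and a Yoneda argument). The converse likewise matches the paper's reasoning that monadic functors are conservative and conservative right adjoints are closed under (transfinite) composition, with your transfinite induction merely making explicit the joint conservativity of limit-stage projections that the paper leaves implicit.
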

More informally, among functors of presentable $\infty$-categories, the conservative right adjoints are precisely those which forget \textit{essentially algebraic structure}. 

For a general, non-conservative, functor $G\colon \cD \to \cC$, the monadic convergence theorem provides a factorization of $G$ as 
\[
    \cD \oto{G_\infty} \cD_\infty \oto{\:\cl{G}\:} \cC,
\]
where $G_\infty$ admits a fully faithful left adjoint $F_\infty$ and $\cl{G}$ is a transfinite composition of monadic functors. As monadic functors are conservative and the latter are closed under composition, it follows that $\cl{G}$ is conservative as well. We show that this realizes a general ``coreflection-conservative'' factorization system on $\Prr$ and that in every such factorization, $F_\infty(\cD_\infty) \sseq \cD$ is the full subcategory generated under colimits by $F(\cC)$, where $F$ is the left adjoint of $G$ (\Cref{PrR_Factor_Sys}). 
Thus, for an object $Y\in\cD$, the counit map $F_\infty G_\infty (Y) \to Y$ provides a universal approximation of $Y$ from the left, by an object which is an iterated colimit of objects in $F(\cC)$. The realization of $G_\infty$ as the limit of the monadic tower of $G$ provides a canonical and functorial such presentation. Namely, we have the following:

\begin{corollary}[Colocalization Sequence, \ref{Iterated_Colimit_Functorial}]
    Given $G\colon \cD \to \cC$ in $\Prr$, the counits of the adjunctions 
    \[
        \begin{tikzcd}
        	{G_\alpha:\cD} & {\cC_\alpha: F_\alpha,}
        	\arrow[shift right=1, shorten <=1pt, shorten >=1pt, from=1-1, to=1-2]
        	\arrow[shift right=1, shorten <=1pt, shorten >=1pt, from=1-2, to=1-1]
        \end{tikzcd}
    \]
    in the monadic tower of $G$, assemble into a transfinite sequence in $\cD_{/Y}$,
    \[
   \begin{tikzcd}
    	{F_0G_0(Y)} & {F_1G_1(Y)} & F_2G_2(Y) & \dots & {F_\alpha G_\alpha(Y)} & \dots
    	\\ &&&& Y
    	\arrow[from=1-1, to=1-2]
    	\arrow[from=1-2, to=1-3]
    	\arrow[from=1-3, to=1-4]
    	\arrow[from=1-4, to=1-5]
    	\arrow[from=1-5, to=1-6]
    	\arrow[from=1-5, to=2-5]
     	\arrow[curve={height=12pt}, from=1-1, to=2-5]
     	\arrow[curve={height=9pt}, from=1-2, to=2-5]
     	\arrow[curve={height=9pt}, from=1-3, to=2-5]
    \end{tikzcd}
    \]
    which stabilizes on $F_\infty G_\infty (Y) \to Y$. Moreover, for each ordinal $\alpha$, the term $F_\alpha G_\alpha (Y)$ is canonically a colimit of objects in $F_\beta(\cC)$ for $\beta <\alpha$.    
\end{corollary}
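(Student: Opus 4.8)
The plan is to assemble the corollary from the structural data of the monadic tower. I would begin by recalling the setup: for each ordinal $\alpha$ we have an adjunction $F_\alpha \dashv G_\alpha$ between $\cD$ and $\cC_\alpha$, together with the connecting functors $\cC_{\alpha+1} \to \cC_\alpha$ in the tower. The first task is to produce, functorially in $\alpha$, the maps $F_\alpha G_\alpha(Y) \to F_{\alpha+1}G_{\alpha+1}(Y)$ and, at limit stages, the maps out of the limit. For the successor step, I would use that the monadic functor $\cC_{\alpha+1}\to\cC_\alpha$ comes from a comparison functor, which gives a canonical transformation relating the two adjunctions over $\cD$; concretely, the composite $F_\alpha G_\alpha$ and $F_{\alpha+1}G_{\alpha+1}$ are both endofunctors of $\cD$ equipped with counits to $\id_\cD$, and the tower structure supplies a natural map between them compatible with these counits. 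I would phrase this as a sequence in the slice $\cD_{/Y}$, where all the objects $F_\alpha G_\alpha(Y)$ sit over $Y$ via their counits and the connecting maps are maps over $Y$.

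Next I would address convergence. By the Monadic Convergence theorem (\ref{Monadic_Convergence}), the tower stabilizes for $\alpha \gg 0$ at $\cC_\infty = \invlim_\alpha \cC_\alpha$, and the induced functor $\cD \to \cC_\infty$ has a fully faithful left adjoint $F_\infty$. I would argue that applying $F_\infty G_\infty$ to $Y$ computes the stable value of the sequence, so the transfinite sequence $\alpha \mapsto F_\alpha G_\alpha(Y)$ stabilizes on $F_\infty G_\infty(Y) \to Y$. The key point here is that the stabilization of the tower of $\infty$-categories transfers to stabilization of the associated counit-endofunctors evaluated at $Y$; I would derive this by noting that $F_\infty, G_\infty$ are the limit-stage adjoints and that the comparison maps become equivalences once the tower has stabilized.

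The third and most delicate assertion is the ``colocalization'' property: that $F_\alpha G_\alpha(Y)$ is canonically a colimit of objects in $F_\beta(\cC)$ for $\beta < \alpha$. I would prove this by transfinite induction on $\alpha$. At successor stages, the crucial input is the functorial iterated-colimit presentation of the coreflection promised in the abstract together with part (3) of the convergence theorem, which identifies the relevant image as the subcategory generated under colimits by $F(\cC)$. I expect the monadic functor $\cC_{\alpha+1}\to\cC_\alpha$ to be presented, via the monadicity/Barr–Beck machinery, as a sifted (in fact bar/simplicial) colimit of free algebras, and pushing this presentation through $F_{\alpha+1}$ should exhibit $F_{\alpha+1}G_{\alpha+1}(Y)$ as a colimit of objects of the form $F_\alpha(-)$, hence inductively of objects in $F_\beta(\cC)$ for $\beta < \alpha+1$. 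At limit stages, I would use that $\cC_\alpha \iso \invlim_{\beta<\alpha}\cC_\beta$, together with the interaction between this limit and the colimit-generation statement, to obtain the desired presentation by combining the presentations at earlier stages.

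The main obstacle will be establishing the \emph{canonicity} and \emph{functoriality} of the colimit presentation at the successor step, rather than merely its existence. It is one thing to know abstractly that $F_{\alpha+1}G_{\alpha+1}(Y)$ lies in the colimit-closure of $F_\beta(\cC)$; it is another to produce a natural, coherent diagram whose colimit it is, compatibly with the transfinite sequence and with the maps to $Y$. I expect the resolution to come from the explicit bar construction underlying the monadic comparison: the simplicial (or, more generally, sifted) diagram of free $T_\alpha$-algebras is itself functorial, and applying the left adjoints $F_\alpha$ preserves these colimits, so the presentation is inherited coherently. Ensuring that all of this is compatible across limit ordinals — so that the pieces genuinely assemble into a single functorial colocalization sequence in $\cD_{/Y}$ — is where I would expect to spend most of the care.
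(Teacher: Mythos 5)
Your overall route is the paper's own: the successor-stage maps come from the tower structure exactly as in \Cref{Def_Coloc_Sequence}, your ``push the bar resolution through the left adjoint'' argument is precisely \Cref{Super_Sulyma} (the colimit of the $\comnd_\alpha$-resolution of $Y$ is $\comnd_{\alpha+1}Y = F_{\alpha+1}G_{\alpha+1}(Y)$, with terms in $F_\alpha(\cC_\alpha)$), and stabilization is deduced from \Cref{Monadic_Convergence} just as in the paper's proof of \Cref{Iterated_Colimit_Functorial}. However, there is a genuine gap at limit ordinals. Everything there rests on a fact you never supply: that the induced functor $G_\alpha \colon \cD \to \invlim_{\beta<\alpha}\cC_\beta$ admits a left adjoint at all, and that its counit $F_\alpha G_\alpha(Y)\to Y$ is canonically the colimit over $\beta<\alpha$ of the earlier counits $F_\beta G_\beta(Y) \to Y$. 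This single fact produces the limit-stage maps of the transfinite sequence, the continuity of the sequence, and the limit-stage colimit presentation; the paper gets it from \cite[Theorem B]{horev2017conjugates}. Your phrase ``the interaction between this limit and the colimit-generation statement'' does not substitute for it: a cone of right adjoints induces a functor to the limit, but that this functor is again a right adjoint, with counit computed as a colimit of the earlier counits, is the one nontrivial categorical input of the whole construction.

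Second, your appeal at successor stages to ``the functorial iterated-colimit presentation of the coreflection promised in the abstract,'' together with part (3) of the convergence theorem, is circular: that promise \emph{is} the corollary being proved, and part (3) plays no role in the term-wise presentation. In fact, no transfinite induction is needed for the ``Moreover'' clause. The claim is a single colimit presentation with vertices in $\bigcup_{\beta<\alpha}F_\beta(\cC_\beta)$, and it is immediate from the construction: at a successor $\alpha+1$ the term is the colimit of the simplicial $\comnd_\alpha$-resolution, all of whose terms lie in $F_\alpha(\cC_\alpha)$; at a limit $\alpha$ the term is the colimit of the preceding terms $\comnd_\beta Y = F_\beta G_\beta(Y) \in F_\beta(\cC_\beta)$. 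Your induction is instead aimed at the stronger \emph{iterated}-colimit statement (the paper's \Cref{Functorial_Colimit}); keeping the two statements apart removes both the need for induction and the circular input.
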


\subsubsection{Localization and completion}

The inspiration for this work comes from the subject of homology localizations and completions of spaces in homotopy theory. More concretely, in \cite{dror1977long}, Dwyer and Farjoun\footnote{The mathematician formerly known as Emmanuel \textit{Dror}, has changed his name (back) to Emmanuel \textit{Farjoun} (the words `dror' and `farjoun' mean `freedom' in Hebrew and Arabic respectively).} construct for each space $X$ and a ring $R$, which is either $\FF_p$ or a subring of $\QQ$, a functorial transfinite tower of spaces under $X$
\[
   \begin{tikzcd}
    	& X \\
    	\dots & {T_\alpha X} & \dots & {T_2X} & {T_1X} & {T_0X,}
    	\arrow[from=2-5, to=2-6]
    	\arrow[from=2-2, to=2-3]
    	\arrow[from=2-1, to=2-2]
    	\arrow[from=1-2, to=2-2]
    	\arrow[curve={height=-9pt}, from=1-2, to=2-5]
    	\arrow[curve={height=-12pt}, from=1-2, to=2-6]
    	\arrow[from=2-4, to=2-5]
    	\arrow[from=2-3, to=2-4]
    	\arrow[curve={height=-6pt}, from=1-2, to=2-4]
    \end{tikzcd}
\]
which they call the \textit{long homology localization tower} of $X$ with respect to $R$. Moreover, they show that this tower always stabilizes on the \textit{$R$-localization} of $X$ in the sense of Bousfield (see \Cref{Ex_Homology_Loc}). They also observe that the first few terms of this tower are familiar constructions in terms of the \textit{$R$-completion} of $X$ in the sense of Bousfield-Kan (see \Cref{Unstable_Nil_Completion}). These ideas were further studied in \cite{casacuberta1999localizations}.

The long homology localization sequence can be deduced from our setting, or rather its \textit{dual}, as follows. For every \textit{ring spectrum} $R$, the composition
\[
    \Spc \oto{\;\;\Sigma^\infty_+\;\;} 
    \Sp \oto{R\otimes(-)}
    \Mod_R(\Sp)
\]
is a \textit{left} adjoint functor between presentable $\infty$-categories. Thus, we can construct the associated \textit{comonadic tower}, by formally dualizing the construction of the monadic tower of a right adjoint functor. The localization sequence of Dwyer and Farjoun can then be obtained from this comonadic tower by the same procedure that produces the colocalization sequence from a monadic tower (see \Cref{Ex_Homology_Loc_Tower}). Unfortunately, our monadic convergence theorem does not give an alternative proof for the convergence of this long homology localization sequence, since it applies to presentable rather than \textit{op}-presentable $\infty$-categories. We shall, however, discuss how one might be able to modify our methods to address this dual setting, modulo a certain closure property of presentable $\infty$-categories, which is known to hold for \textit{ordinary} presentable categories (\Cref{Shauli_Thesis}).   

\subsubsection{Relation to other work}
It was only in an advanced stage of this project, that I have learned that most of the ideas and results presented above were already known for \textit{ordinary} categories (see, for example, \cite{applegate1970iterated,dubuc2006kan,macdonald1982tower,adamek1989monadic}, for various treatments of the subject). However, even in hindsight, the generalization to $\infty$-categories is not so straightforward. While for generalizing the \textit{construction} of the monadic tower, one has to deal ``only'' with the usual higher coherence issues, for proving its \textit{convergence}, one has to deal with a more serious obstacle. Monadic functors of ordinary categories are \textit{faithful}\footnote{This is in accordance with the `stuff, structure, properties' philosophy of \cite[Section 2.4]{baez2010lectures}.}, which allows one to argue about \textit{subobjects} and \textit{quotients}. This phenomenon has no evident analogue in the world of  $\infty$-categories, as a general monadic functor might induce on mapping spaces, maps which are not homotopically truncated at any degree. The argument for convergence presented in this paper is therefore, necessarily, of a different flavor. This is also where the presentability assumption, which is stronger than what is known to be needed for ordinary categories, is most heavily used. 


\subsubsection{Acknowledgments}

The diagrams in this paper were created with the aid of
\href{https://q.uiver.app/}{quiver}. I would like to thank Shay Ben Moshe, Shachar Carmeli, Tomer Schlank, and the entire Seminark group for useful discussions regarding the subject of this work. I would like to thank Shay Ben Moshe and Tomer Schlank also for their comments on an earlier draft. I thank the anonymous referee for valuable suggestions and corrections.  
Finally, I want to express my gratitude to Emmanuel Farjoun, for introducing me to homotopy theory and for his constant support and mentoring throughout the years. 

\subsubsection{Conventions}
We shall generally follow \cite{htt,HA} in notation and terminology regarding $\infty$-categories and, in particular, in the use of the words `small' and `large' in set-theoretical considerations. We use the symbol $\sseq$ to indicate a \textit{full} subcategory and $\subset$ to indicate a \textit{not necessarily full}  subcategory. 
We have the following $\infty$-categories (of $\infty$-categories):
\begin{itemize}
    \item $\widehat{\Cat}$ is the $\infty$-category of large $\infty$-categories (which is itself \textit{very large}).
    
    \item $\Catr \subset \widehat{\Cat}$ is the wide subcategory of large $\infty$-categories and \textit{right} adjoints.
    
    \item $\Prr \sseq \Catr$ is the full subcategory of presentable $\infty$-categories and \textit{right} adjoints.
    
    \item $\Prl \simeq (\Prr)^\op$ is the $\infty$-category of presentable $\infty$-categories and \textit{left} adjoints. 
    
    \item $\Ord \in \widehat{\Cat}_\infty$ is the (large) poset of \textit{small} ordinals.  
    
    \item $\Cat_\infty \in \widehat{\Cat}_\infty$ is the $\infty$-category of \textit{small} $\infty$-categories.
    
    \item $\Spc \sseq \Cat_\infty$ is the $\infty$-category of \textit{small} spaces (i.e. $\infty$-groupoids). 
    
    \item $\Delta \sseq \Cat_\infty$ is the \textit{simplex category}, whose objects we  denote by $[n]:=(0\to1\to\dots \to n)$.
\end{itemize}
As a default, by an `$\infty$-category' we shall mean a \textit{large} $\infty$-category and by `colimits' we shall mean \textit{small} colimits. We shall also identify a collection of objects $S$ in an $\infty$-category $\cC$ with the full subcategory of $\cC$ spanned by $S$.  

\section{Conservativity and Generation}

In this section, we shall study a certain ``epi-mono'' factorization system on $\Prl$ (\Cref{PrL_Epi_Mono}) and show that it induces on $\Prr$, via the equivalence $\Prr \simeq (\Prl)^{\op}$, a ``coreflection-conservative'' factorization system (\Cref{PrR_Factor_Sys}). We shall also discuss a dual construction giving a second pair of factorization systems on $\Prl$ and $\Prr$ (\Cref{PrL_Factor_Sys}) and discuss the technical differences between the two versions. 

\subsubsection{Colimit generators}

We begin with a few observations regarding ``colimit generators'' in presentable $\infty$-categories. 

\begin{defn}
    For an $\infty$-category $\cC$ and a set of objects $S\sseq \cC$, by the subcategory \tdef{generated by $S$ under colimits} (of some given shapes), we mean the smallest full subcategory of $\cC$, which contains $S$ and is closed under all colimits (of the given shapes) which exist in $\cC$. 
\end{defn}

This should be contrasted with the following:
\begin{defn}\label{Def_Cl}
    For an $\infty$-category $\cC$ and a set of objects $S\sseq \cC$, we let $\mdef{\cl{S}}\sseq \cC$ denote the full subcategory spanned by objects, that can be written as a colimit of a diagram with values in $S$.     
\end{defn}

We warn the reader that $\cl{S}$ might \textit{not} be closed under colimits in $\cC$. Thus, to construct the subcategory generated by $S$ under colimits, one might need to iterate the construction $S \mapsto \cl{S}$ (possibly transfinitely) many times.

\begin{example}
    For $\cC = \Ab$, the set $S = \{\ZZ\}$ clearly generates $\Ab$ under colimits. However, the objects of $\cl{S}$ are precisely those abelian groups which have a presentation, in which every relation contains only two generators. These are known as the \textit{simply presented} abelian groups and not all abelian groups are such.
\end{example}

By definition, every presentable $\infty$-category is generated under (small) colimits by a small set. Conversely, 
\begin{prop}\label{Gen_Presentable}
    Let $\cC$ be a presentable $\infty$-category and $S\sseq \cC$ a small set of objects. The subcategory of $\cC$ generated by $S$ under colimits is also presentable. 
\end{prop}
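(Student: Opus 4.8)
The plan is to realize the subcategory generated by $S$ under colimits, which I will denote $\cC_S\sseq\cC$, as the $\kappa$-ind-completion of a small $\infty$-category for a well-chosen regular cardinal $\kappa$, so that presentability follows from the fact that $\mathrm{Ind}_\kappa$ of a small $\infty$-category is presentable. Since $\cC$ is presentable and $S$ is small, I would first choose $\kappa$ large enough that $\cC$ is $\kappa$-accessible and that every object of $S$ is $\kappa$-compact. Writing $\cC^\kappa\sseq\cC$ for the full subcategory of $\kappa$-compact objects, we then have $\cC\simeq\mathrm{Ind}_\kappa(\cC^\kappa)$.

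Next I would let $\cD_0\sseq\cC^\kappa$ be the smallest full subcategory containing $S$ and closed under $\kappa$-small colimits computed in $\cC$; these colimits land in $\cC^\kappa$ because $\kappa$-compact objects are closed under $\kappa$-small colimits. As a full subcategory of the essentially small $\infty$-category $\cC^\kappa$, the category $\cD_0$ is again essentially small, and by construction it admits all $\kappa$-small colimits. Applying the universal property of the ind-completion to the fully faithful inclusion $\cD_0\into\cC$, whose image consists of $\kappa$-compact objects, I obtain a fully faithful functor $F\colon\mathrm{Ind}_\kappa(\cD_0)\into\cC$ \cite[Prop. 5.3.5.11]{htt}. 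Moreover $F$ preserves \emph{all} small colimits, not merely $\kappa$-filtered ones: because $\cD_0$ is closed under $\kappa$-small colimits, $\mathrm{Ind}_\kappa(\cD_0)$ is cocomplete, and a functor out of it preserving $\kappa$-small and $\kappa$-filtered colimits preserves every small colimit \cite[Cor. 4.2.3.11]{htt}.

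It then remains to identify the essential image of $F$ with $\cC_S$. Since $F$ is fully faithful and preserves small colimits and its domain is cocomplete, its essential image is a full subcategory of $\cC$ closed under all small colimits; as it contains $S$, it contains $\cC_S$. Conversely, every object in the image is a $\kappa$-filtered colimit of objects of $\cD_0$, each of which is a $\kappa$-small colimit of objects of $S$, so the image lies in any colimit-closed full subcategory containing $S$, and in particular in $\cC_S$. Hence $\cC_S\simeq\mathrm{Ind}_\kappa(\cD_0)$ is presentable. I expect the point requiring the most care to be precisely this last identification: verifying that the $\kappa$-small colimit closure followed by $\kappa$-filtered colimits already exhausts the full colimit closure, which rests on the standard decomposition of an arbitrary small colimit as a $\kappa$-filtered colimit of $\kappa$-small ones, together with the bookkeeping of the $\kappa$-compactness hypotheses that make $F$ fully faithful rather than merely co-cofinal onto the closure.
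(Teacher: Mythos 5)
Your proposal is correct and follows essentially the same route as the paper's own proof: both choose $\kappa$ so that $\cC$ is $\kappa$-compactly generated and $S$ consists of $\kappa$-compact objects, close $S$ under $\kappa$-small colimits inside the $\kappa$-compact objects, and identify $\Ind_\kappa$ of this small category with the colimit closure of $S$ via the fully faithful, colimit-preserving functor $\Ind_\kappa(\cC_0^\kappa)\to\cC$ furnished by \cite[Propositions 5.3.5.11 and 5.3.5.13]{htt}. The only differences are cosmetic: you route the colimit-preservation through the decomposition of colimits into $\kappa$-small and $\kappa$-filtered ones rather than citing \cite[Proposition 5.3.5.13]{htt} directly, and you spell out the final identification of the essential image, which the paper leaves implicit.
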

\begin{proof}
    Let $\kappa$ be a large enough regular cardinal, such that $\cC$ is $\kappa$-compactly generated and all objects of $S$ are $\kappa$-compact. Denote by $\cC^\kappa \sseq \cC$ the full subcategory spanned by the $\kappa$-compact objects and by $\cC_0^\kappa\sseq \cC$ the full subcategory generated by $S$ under $\kappa$-small colimits. Since $\kappa$-compact objects are closed under $\kappa$-small colimits, we have $\cC_0^\kappa \sseq \cC^\kappa$, and moreover, this inclusion preserves $\kappa$-small colimits. Hence, the induced functor
    \[
        \Ind_\kappa(\cC_0^\kappa) \to \Ind_\kappa(\cC^\kappa)
    \]
    is fully faithful by \cite[Proposition 5.3.5.11]{htt} and preserves all colimits by \cite[Proposition 5.3.5.13]{htt}. Finally, the $\kappa$-compactly generated (and so in particular presentable) $\infty$-category $\Ind_\kappa(\cC_0^\kappa)$ is precisely the full subcategory of $\cC$ generated by $S$ under colimits.
\end{proof}

We deduce a useful criterion for a set of objects to generate a presentable $\infty$-category under colimits. 
\begin{cor}\label{Gen_Crit}
    Let $\cC$ be a presentable $\infty$-category and $S\sseq\cC$ a small set of objects. The set $S$  generates $\cC$ under colimits, if and only if the collection of corepresentable functors $\{\Map(Z,-)\}_{Z\in S}$ is jointly conservative.    
\end{cor}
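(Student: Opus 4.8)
The plan is to reformulate the statement in terms of the full subcategory $\cC_0 \sseq \cC$ generated by $S$ under colimits, so that the condition ``$S$ generates $\cC$ under colimits'' becomes simply ``$\cC_0 = \cC$'', and then to play this off against the coreflection furnished by \Cref{Gen_Presentable}. Throughout I would freely use that mapping out of a colimit is the limit of the mapping spaces, that a limit of equivalences is an equivalence, and the Yoneda lemma.

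First I would treat the forward (``only if'') direction, which needs no extra structure. Assume $\cC_0 = \cC$ and let $f\colon X \to Y$ satisfy that $\Map(Z,f)$ is an equivalence for every $Z \in S$. Consider the full subcategory $\cD \sseq \cC$ spanned by those $W$ for which $\Map(W,f)\colon \Map(W,X) \to \Map(W,Y)$ is an equivalence. Since $\Map(-,f)$ sends colimits in the first variable to limits, $\cD$ is closed under all small colimits that exist in $\cC$, and by hypothesis it contains $S$. Hence $\cD$ contains the smallest such subcategory, namely $\cC_0 = \cC$, so $\Map(W,f)$ is an equivalence for every $W \in \cC$; the Yoneda lemma then forces $f$ to be an equivalence. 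This shows $\{\Map(Z,-)\}_{Z\in S}$ is jointly conservative.

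For the reverse (``if'') direction I would invoke the coreflection. By \Cref{Gen_Presentable} the subcategory $\cC_0$ is presentable, and the fully faithful inclusion $i\colon \cC_0 \into \cC$ preserves all small colimits, so it admits a right adjoint $R$ by the adjoint functor theorem. Full faithfulness of $i$ makes $\cC_0$ coreflective: the unit $\mathrm{id}_{\cC_0} \to Ri$ is an equivalence, and an object $X \in \cC$ lies in $\cC_0$ precisely when the counit $\epsilon_X\colon iR(X) \to X$ is an equivalence. The key point is that for every $Z \in S$ (indeed every $Z \in \cC_0$) the map $\Map(Z,\epsilon_X)$ is an equivalence: writing $Z = i(Z')$, both $\Map_\cC(Z,iR(X))$ and $\Map_\cC(Z,X)$ are identified with $\Map_{\cC_0}(Z',RX)$ via full faithfulness of $i$ and the adjunction $i \dashv R$, compatibly with $\epsilon_X$. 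Assuming $\{\Map(Z,-)\}_{Z\in S}$ is jointly conservative and applying it to $\epsilon_X$ then yields that $\epsilon_X$ is an equivalence for every $X$, whence $\cC_0 = \cC$, i.e. $S$ generates $\cC$ under colimits.

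The step I expect to be the main obstacle is this last computation showing $\Map(Z,\epsilon_X)$ is an equivalence: one must verify that the two identifications of the mapping spaces with $\Map_{\cC_0}(Z',RX)$ are genuinely intertwined by the counit, which is where the triangle identities and full faithfulness have to be used carefully rather than merely cited. Everything else reduces to the presentability input of \Cref{Gen_Presentable}, the adjoint functor theorem, and the elementary behaviour of mapping spaces under colimits together with Yoneda.
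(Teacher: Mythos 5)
Your proof is correct and follows essentially the same route as the paper: both arguments form the subcategory $\cC_0$ generated by $S$, invoke \Cref{Gen_Presentable} and the adjoint functor theorem to obtain the coreflection $R$ (the paper's $G$), and then translate joint conservativity of $\{\Map(Z,-)\}_{Z\in S}$ into the statement that the counit of $i \dashv R$ is an equivalence, i.e.\ that $\cC_0 = \cC$. The only cosmetic difference is that you unpack the paper's chain of equivalences into two explicit directions (your closure-under-colimits argument for the ``only if'' direction is exactly the paper's implicit step passing from $Z \in \cC_0$ to $Z \in S$), and your ``main obstacle'' is just the standard description of the adjunction equivalence as ``apply $i$, then post-compose with the counit.''
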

\begin{proof}
    Let $\cC_0 \sseq \cC$ be the full subcategory of $\cC$ generated by $S$ under colimits. By \Cref{Gen_Presentable}, the $\infty$-category $\cC_0$ is presentable. Since the inclusion $\cC_0 \into \cC$ is colimit preserving, by the adjoint functor theorem, it admits a right adjoint $G\colon \cC \to \cC_0$. By the Yoneda lemma, $G$ is conservative if and only if the collection of functors $\{\Map(Z,-)\}_{Z\in\cC_0}$ is jointly conservative. However, since $S$ generates $\cC_0$ under colimits, the latter condition is equivalent to the collection of functors $\{\Map(Z,-)\}_{Z\in S}$ being jointly conservative. Finally, since $G$ admits a fully faithful left adjoint, it is conservative if and only if it is an equivalence, which is if and only if $\cC_0 = \cC$.
\end{proof}

\begin{rem}
    It is easy to see that the `only if' part of \Cref{Gen_Crit} holds without any presentability assumptions. However, the `if' part (which is the real essence of the claim) is more subtle (see \cite[Example 4.3 and Remark 4.4]{borger1990total} for an ``almost presentable'' counterexample). 
\end{rem}

In the situation of \Cref{Gen_Crit}, the condition that the collection of functors $\{\Map(Z,-)\}_{Z\in S}$ is jointly conservative is equivalent to the restricted Yoneda functor $\Yo_S \colon \cC \to \Fun(S^\op,\Spc)$ being conservative. If $\Yo_S$ is moreover \textit{fully faithful}, $S$ is said to \textit{strongly generate} $\cC$. This condition is equivalent to the property that for every object $X\in\cC$, we have 
$\colim_{Z \in S_{/X}} Z \iso X.$
Namely, that every $X\in \cC$ is the colimit of the canonical diagram of objects in $S$ mapping to it (see \cite[Section 4.4]{lurie2009infinity}). 

\begin{example}\label{Ex_CSS}
    Let $\cC = \Cat_\infty$, the $\infty$-category of small $\infty$-categories. The singleton set $\{[1]\}$ generates $\Cat_\infty$ under colimits (say, by \Cref{Gen_Crit}), but does not \textit{strongly} generate it. In contrast, the set $\{[n]\}_{n\in\NN}$ does strongly generate $\Cat_\infty$, as the restricted Yoneda functor $\Cat_\infty \to \Fun(\Delta^\op,\Spc)$ is fully faithful. The essential image consists of \textit{complete Segal spaces} (see, for example, \cite[Corollary 4.3.16]{lurie2009infinity}).  
\end{example}

\subsubsection{Factorization systems}

\Cref{Gen_Presentable} allows us to produce the following ``epi-mono'' factorization system on $\Prl$.
\begin{prop}\label{PrL_Epi_Mono}
    Every functor $F\colon \cC \to \cD$ in $\Prl$ can be uniquely factored as 
    \[
        \cC  \oto{\;\mdef{\cl{F}}\;} \mdef{\cD_\infty} \oto{\mdef{F_\infty}} \cD 
        \qin \Prl,
    \] 
    such that 
    \begin{enumerate}
        \item $\cl{F}(\cC)$ generates $\cD_\infty$ under colimits.
        \item $F_\infty$ is fully faithful.
    \end{enumerate}
Moreover, $F_\infty(\cD_\infty)\sseq \cD$ is the subcategory generated under colimits by $F(\cC)$. 
\end{prop}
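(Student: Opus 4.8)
The plan is to build $\cD_\infty$ by hand as the subcategory of $\cD$ generated by $F(\cC)$ under colimits, and then to read off the factorization from full faithfulness, with \Cref{Gen_Presentable} supplying the crucial fact that this subcategory is again presentable. For existence, I would first fix a small set $S\sseq\cC$ generating $\cC$ under colimits, which exists since $\cC$ is presentable. Because $F$ preserves colimits, every object of $F(\cC)$ is an iterated colimit of objects of $F(S)$, so $F(\cC)$ and the \emph{small} set $F(S)$ generate the same full subcategory of $\cD$ under colimits; call it $\cD_\infty$. Applying \Cref{Gen_Presentable} to $F(S)$ shows $\cD_\infty$ is presentable, and since $\cD_\infty$ is by definition closed under colimits in $\cD$, the inclusion $F_\infty\colon\cD_\infty\into\cD$ both preserves and reflects them. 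Thus $F_\infty$ is a fully faithful morphism of $\Prl$, which is condition (2), and the final ``moreover'' clause holds by construction.

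Next I would extract $\cl{F}$. As $F(\cC)\sseq\cD_\infty$ on objects and $F_\infty$ is fully faithful, $F$ factors essentially uniquely as $F\simeq F_\infty\circ\cl{F}$ for some $\cl{F}\colon\cC\to\cD_\infty$. Since $F=F_\infty\circ\cl{F}$ preserves colimits while $F_\infty$ reflects them, $\cl{F}$ preserves colimits, so $\cl{F}$ lies in $\Prl$. Condition (1) is then immediate: $\cD_\infty$ is generated under colimits by $F(\cC)=F_\infty(\cl{F}(\cC))$, and $F_\infty$ is an equivalence onto its image, so $\cl{F}(\cC)$ generates $\cD_\infty$.

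For uniqueness, the key point is that the essential image of \emph{any} fully faithful morphism of $\Prl$ is closed under colimits in its target: a diagram landing in the image lifts along the equivalence onto the image, and its colimit is preserved by the colimit-preserving inclusion, hence stays in the image (this is just the colimit-closedness of the coreflective subcategory cut out by the fully faithful left adjoint). Given an arbitrary factorization satisfying (1) and (2), its middle functor therefore has essential image in $\cD$ that (a)~is closed under colimits, (b)~contains $F(\cC)$, and (c)~by condition (1) together with colimit-preservation of the inclusion, consists exactly of colimits of objects of $F(\cC)$. Hence this image coincides with the canonical subcategory $\cD_\infty$ constructed above; since the middle functor is fully faithful onto it, the entire factorization is pinned down up to equivalence.

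The main obstacle is essentially absorbed into \Cref{Gen_Presentable}: without knowing that the colimit-closure of the small set $F(S)$ is again presentable, $\cD_\infty$ would fail to be an object of $\Prl$ and the factorization would not even exist in the intended category. Once presentability is in hand, the only remaining subtlety is the uniqueness step, and within it the lemma that a fully faithful left adjoint has colimit-closed essential image -- this is exactly what canonically identifies $F_\infty(\cD_\infty)$ with the subcategory generated by $F(\cC)$ and rules out genuinely different factorizations.
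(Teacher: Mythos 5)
Your proof is correct and follows essentially the same route as the paper's: take $\cD_\infty$ to be the subcategory of $\cD$ generated by $F(\cC)$ under colimits, apply \Cref{Gen_Presentable} to the image of a small generating set of $\cC$ to get presentability, and for uniqueness observe that a fully faithful colimit-preserving functor has colimit-closed essential image, forcing the middle term of any factorization to coincide with $\cD_\infty$. The only cosmetic differences are that the paper explicitly invokes the adjoint functor theorem to place $\cl{F}$ and $F_\infty$ in $\Prl$, and cites the fact that fully faithful functors are monomorphisms of $\infty$-categories to pin down $\cl{F}$ uniquely, points you handle implicitly but correctly.
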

\begin{proof}
    For existence, let $\cD_\infty \sseq \cD$ be the full subcategory of $\cD$ generated by $F(\cC)$ under colimits. Since $\cC$ is presentable, it has a small set of objects $S$ that generates it under colimits and hence, $F(S)$ is a small set that generates $\cD_\infty$ under colimits (as $F$ is colimit preserving). Thus, by \Cref{Gen_Presentable}, the $\infty$-category $\cD_\infty$ is presentable. 
    Now, the functor $F$ decomposes (uniquely) as a composition 
    $\cC \oto{\;\cl{F}\;} \cD_\infty \oto{F_\infty} \cD,$
    of functors which clearly satisfy conditions (1) and (2) of the claim.  It remains to show that $\cl{F}$ and $F_\infty$ are left adjoints. The functor $\cl{F}$ preserves colimits by construction and the functor $F_\infty$ preserves colimits since by being fully faithful it reflects colimits (see \cite[Proposition 2.4.7]{riehl2018elements}) and $\cD_\infty$ is closed under colimits in $\cD$. Thus, the claim follows 
    by the adjoint functor theorem.
    
    For uniqueness,  by condition (2), we may first assume without loss of generality that $\cD_\infty$ is a full subcategory of $\cD$. Since the inclusion functor $F_\infty\colon \cD_\infty \into \cD$ is colimit preserving, $\cD_\infty$ must be closed under colimits in $\cD$. Therefore, by condition (1), $\cD_\infty$ is precisely the subcategory generated by $F(\cC)$ under colimits, which identifies uniquely $F_\infty$.
    The uniqueness of $\cl{F}$ follows from the fact that fully faithful functors are monomorphisms of $\infty$-categories and hence $F$ factors through $F_\infty$ in a unique way.
\end{proof}

By the equivalence $\Prr \simeq (\Prl)^\op$, \Cref{PrL_Epi_Mono} induces a factorization system on $\Prr$. The right adjoint of a fully faithful functor is, by definition, a \textit{coreflection}.
The following proposition identifies the other class as consisting precisely of the \textit{conservative} right adjoints. 
\begin{prop}\label{Conserv_Generates}
    Let $G\colon \cD \to \cC$ in $\Prr$ with left adjoint $F$. The functor $G$ is conservative, if and only if $F(\cC)$  generates $\cD$ under colimits. 
\end{prop}
\begin{proof}
    Let $S$ be a small set of objects which generates $\cC$ under colimits. By \Cref{Gen_Crit}, the collection of functors $\{\Map(Z,-)\}_{Z\in S}$ is jointly conservative. Hence, by the cancellation property of conservative functors, $G$ is conservative, if and only if the collection of functors 
    \[
        \Map_\cC(Z,G(-)) \simeq \Map_\cD(F(Z),-)
        \quad\text{for}\quad Z \in S,
    \]
    is jointly conservative.
    Using \Cref{Gen_Crit} again, this is if and only if $F(S)$, or equivalently $F(\cC)$, generates $\cD$ under colimits.  
\end{proof}

We thus get the following ``coreflection-conservative'' factorization system on $\Prr$.
\begin{cor}\label{PrR_Factor_Sys}
    Every functor $G\colon\cD\to\cC$ in $\Prr$ can be uniquely factored as
    \[
        \cD \oto{\mdef{G_\infty}} \mdef{\cD_\infty} \oto{\;\mdef{\cl{G}}\;} \cC
        \qin \Prr,
    \]
    such that
    \begin{enumerate}
        \item $G_\infty$ admits a fully faithful left adjoint.
        \item $\cl{G}$ is conservative.
    \end{enumerate}
    Moreover, we can take $\cD_\infty \sseq \cD$ to be the subcategory generated under colimits by the essential image of the left adjoint of $G$ and $\cl{G}$ to be the restriction of $G$ to $\cD_\infty$.  
\end{cor}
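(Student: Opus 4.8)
The plan is to derive \Cref{PrR_Factor_Sys} by transporting the factorization system of \Cref{PrL_Epi_Mono} across the equivalence $\Prr \simeq (\Prl)^\op$, and then identifying the resulting right-hand class using \Cref{Conserv_Generates}. The key observation is that a functor $G\colon\cD\to\cC$ in $\Prr$ corresponds to its left adjoint $F\colon\cC\to\cD$ in $\Prl$, and a factorization $G_\infty$ then $\cl{G}$ in $\Prr$ corresponds, under the $\op$, to a factorization $\cl{F}$ then $F_\infty$ in $\Prl$, read in reverse order. So the very first step is purely formal: apply \Cref{PrL_Epi_Mono} to $F$ to obtain the unique factorization $\cC \oto{\cl{F}} \cD_\infty \oto{F_\infty} \cD$ in $\Prl$.

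Next I would read off what the two defining conditions of the $\Prl$-factorization become for the adjoints. The functor $F_\infty\colon\cD_\infty\into\cD$ is fully faithful, so its right adjoint in $\Prr$ is a coreflection; this right adjoint is exactly the $G_\infty$ of the statement, giving condition (1). Likewise, $\cl{F}\colon\cC\to\cD_\infty$ has the property that $\cl{F}(\cC)$ generates $\cD_\infty$ under colimits; its right adjoint is $\cl{G}\colon\cD_\infty\to\cC$, and by \Cref{Conserv_Generates} applied to the adjunction $\cl{F}\dashv\cl{G}$, the generation condition is precisely equivalent to $\cl{G}$ being conservative, giving condition (2). The identification of the middle object, that $\cD_\infty\sseq\cD$ is the subcategory generated under colimits by the essential image of $F$, is inherited directly from the final clause of \Cref{PrL_Epi_Mono}, since $F_\infty(\cD_\infty)$ is exactly that subcategory and $\cl{G}$ is then simply the right adjoint to the inclusion, i.e.\ the restriction of $G$.

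For uniqueness, I would again transport: the factorization system structure (existence together with uniqueness) is preserved by passing to opposite categories, so the uniqueness of the $\Prr$-factorization follows from the uniqueness already established in \Cref{PrL_Epi_Mono}. The only point requiring a small argument is checking that the two classes really do correspond to each other under $(-)^\op$ — that ``admits a fully faithful left adjoint'' in $\Prr$ is dual to ``is fully faithful'' in $\Prl$, and that ``conservative right adjoint'' is dual to ``left adjoint whose image generates under colimits.'' The former is immediate from the adjoint equivalence $\Prr\simeq(\Prl)^\op$; the latter is the content of \Cref{Conserv_Generates}.

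I do not anticipate a serious obstacle here, since all the substance has been front-loaded into \Cref{PrL_Epi_Mono} and \Cref{Conserv_Generates}; the present statement is essentially their formal combination. The one place to be careful is the bookkeeping of the duality: one must verify that the composite $\cl{G}\circ G_\infty$ recovers $G$ (not merely that their adjoints compose correctly), and that the coreflection $G_\infty$ and the conservative $\cl{G}$ sit on the correct sides of the middle object $\cD_\infty$. This amounts to confirming that taking right adjoints is functorial and reverses composition, so that $(F_\infty\circ\cl{F})^{\text{radj}} = \cl{F}^{\text{radj}}\circ F_\infty^{\text{radj}} = \cl{G}\circ G_\infty$, which is exactly the desired factorization of $G$.
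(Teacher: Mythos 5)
Your proposal is correct and is exactly the paper's argument: the paper's proof reads ``By taking left adjoints, this follows immediately from \Cref{PrL_Epi_Mono} and \Cref{Conserv_Generates},'' and your write-up simply spells out the same dualization, including the bookkeeping that composition of right adjoints reverses order and that \Cref{Conserv_Generates} translates the generation condition into conservativity. No gaps.
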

\begin{proof}
     By taking left adjoints, this follows immediately from \Cref{PrL_Epi_Mono} and \Cref{Conserv_Generates}.
\end{proof}

\begin{rem}\label{Abstract_Loc}
    In the situation of \Cref{PrR_Factor_Sys}, let $W$ be the collection of morphisms in $\cD$, that are sent to isomorphisms by $G\colon \cD \to \cC$. The functor $G_\infty\colon\cD \to \cD_\infty$ exhibits $\cD_\infty$ as the abstract localization $\cD[W^{-1}]$, so in particular, the $\infty$-category $\cD_\infty$ depends only on $W$ (\cite[\href{https://kerodon.net/tag/02GA}{02GA}]{kerodon}).
\end{rem}

\begin{example}
     The functor $\Omega^\infty\colon \Sp \to \Spc$ factors as the composition of the connective cover functor $\tau_{\ge0}\colon \Sp \to \Sp^\cn$ with the conservative restricted functor $\Omega^\infty\colon\Sp^\cn \to \Spc$. Correspondingly, the essential image of the left adjoint $\Sigma_+^\infty \colon \Spc \to \Sp$ generates under colimits the subcategory of connective spectra $\Sp^\cn \sseq \Sp$. 
\end{example}

\subsubsection{Dual factorizations}

We also have the ``dual'' of \Cref{PrR_Factor_Sys}, producing a \textit{second} factorization system on $\Prl$.
\begin{prop}\label{PrL_Factor_Sys}
    Every morphism in $\Prl$ can be uniquely factored as a reflection followed by a conservative left adjoint.
\end{prop}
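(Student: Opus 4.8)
The plan is to produce the factorization by localizing $\cC$ at the class of morphisms that $F$ inverts, dually to how \Cref{PrL_Epi_Mono} factors through the colimit-closure of the image. Given $F\colon \cC \to \cD$ in $\Prl$, with right adjoint $G$, I would let $W$ denote the collection of morphisms $f$ of $\cC$ such that $F(f)$ is an equivalence. Since $F$ preserves all colimits, $W$ is a \emph{strongly saturated} class of morphisms (\cite[\S 5.5.4]{htt}). The heart of the argument, and the step I expect to be the main obstacle, is to show that $W$ is moreover \emph{of small generation}, so that the reflective localization of $\cC$ at $W$ is accessible. Granting this, the theory of accessible localizations (\cite[\S 5.5.4]{htt}) produces a reflective localization $L\colon \cC \to \cC[W^{-1}]$ onto the full subcategory of $W$-local objects, with $\cC[W^{-1}]$ presentable and $L$ a reflection in $\Prl$, whose class of inverted morphisms is precisely $W$.

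Granting this first step, the rest assembles routinely. As $F$ inverts every morphism of $W$, the universal property of the localization supplies an essentially unique colimit-preserving functor $C\colon \cC[W^{-1}] \to \cD$ with $C\circ L \simeq F$; by the adjoint functor theorem $C$ lies in $\Prl$, so that $F \simeq C \circ L$ is a reflection followed by a left adjoint. To see that $C$ is conservative, I would take a morphism $g$ between $W$-local objects with $C(g)$ an equivalence; viewing $g$ as a morphism of $\cC$, the unit identifies $g$ with its localization and identifies $F(g)$ with $C(g)$, so $g \in W$. But $W$ is exactly the class inverted by $L$, hence $g$ is inverted by $L$, and a morphism between $W$-local objects inverted by $L$ is already an equivalence. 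Thus $C$ is conservative, completing the existence half.

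For uniqueness I would use that a reflection is determined by its class of inverted morphisms. If $F \simeq C'\circ L'$ is any factorization with $L'$ a reflection and $C'$ a conservative left adjoint, then the class of morphisms inverted by $L'$ agrees with the class inverted by $C'\circ L' \simeq F$ — here conservativity of $C'$ gives the nontrivial inclusion — and this class is $W$. Hence $L'$ is again the localization at $W$, yielding a canonical equivalence over $\cC$ between the two intermediate categories under which $C'$ corresponds to $C$. One can additionally verify that reflections are left orthogonal to conservative left adjoints, so that these two classes in fact constitute an orthogonal factorization system on $\Prl$.

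The remaining work, and the genuine obstacle, is the small generation of $W$, which is exactly where presentability enters and which is available precisely because we are in the \emph{presentable}, rather than op-presentable, setting. The strategy is to fix a regular cardinal $\kappa$ for which $\cC$ and $\cD$ are $\kappa$-compactly generated and $F$ carries $\kappa$-compact objects to $\kappa$-compact objects, and to set $W_0 \sseq W$ to be the small set of morphisms between $\kappa$-compact objects that $F$ inverts. One then shows that the strongly saturated class generated by $W_0$ is all of $W$; concretely, that every $W_0$-local object is automatically $W$-local, which follows from the $\kappa$-accessibility of $F$ by presenting an arbitrary morphism of $W$ as a $\kappa$-filtered colimit of morphisms between $\kappa$-compact objects. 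This accessibility input is the one place where the argument depends essentially on the presentability hypothesis, and is the analogue, in the presentable world, of the closure property for op-presentable $\infty$-categories discussed in the introduction.
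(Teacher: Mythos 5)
Your overall strategy coincides with the paper's: both factor $F$ by localizing $\cC$ at the class $W = F^{-1}(\mathrm{Iso}_\cD)$ of morphisms inverted by $F$, obtain a presentable reflective localization onto the $W$-local objects from the theory of accessible localizations in \cite[Section 5.5.4]{htt}, factor $F$ through it by the universal property, and get conservativity and uniqueness exactly as you describe (the paper declares uniqueness ``clear''; your argument via the inverted class is the intended one). The divergence is in the one step you yourself flag as the genuine obstacle -- small generation of $W$ -- and there your proposed argument has a real gap. You take $W_0$ to be the set of morphisms between $\kappa$-compact objects inverted by $F$, and claim that every $W_0$-local object is $W$-local ``by presenting an arbitrary morphism of $W$ as a $\kappa$-filtered colimit of morphisms between $\kappa$-compact objects.'' But such a presentation $f \simeq \colim_i f_i$ in $\Fun(\Delta^1,\cC)$ only guarantees that $\colim_i F(f_i) \simeq F(f)$ is an equivalence; the individual $F(f_i)$ need not be equivalences, so the $f_i$ need not lie in $W_0$, and $W_0$-locality of an object $Z$ then gives no control over the maps $\Map(Y_i,Z)\to\Map(X_i,Z)$ whose limit you need to be an equivalence. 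What your argument actually requires is the much stronger statement that every morphism of $W$ is a $\kappa$-filtered colimit of morphisms which are between $\kappa$-compact objects \emph{and themselves lie in} $W$ -- equivalently, that $W$ is an accessible full subcategory of $\Fun(\Delta^1,\cC)$.

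That accessibility statement is true, but it is not a formal consequence of the $\kappa$-accessibility of $F$; it is the technical heart of \cite[Proposition 5.5.4.16]{htt}, whose proof rests on the accessibility machinery of \cite[Section 5.4]{htt} (preimages of accessible subcategories under accessible functors are accessible). The paper sidesteps the issue entirely: it observes that $\mathrm{Iso}_\cD$ is strongly saturated and of small generation (generated by the identities of a small generating set of $\cD$), and then quotes \cite[Proposition 5.5.4.16]{htt} to conclude that $W = F^{-1}(\mathrm{Iso}_\cD)$ is strongly saturated and of small generation, after which \cite[Propositions 5.5.4.15 and 5.5.4.20]{htt} supply the presentable reflection and its universal property. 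So your proof becomes correct, and essentially identical to the paper's, once the sketched small-generation step is replaced by a citation of (or an honest proof of) that result; as written, the sketch does not establish it.
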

\begin{proof}
    For $F\colon \cC \to \cD$ in $\Prl$, we denote by $\mathrm{Iso}_\cD$ the collection of isomorphisms in $\cD$ and we let $W=F^{-1}(\mathrm{Iso}_\cD)$. The class $\mathrm{Iso}_\cD$ is trivially strongly saturated in the sense of \cite[Definition 5.5.4.5]{htt} and is also of small generation (it is generated by the small collection of identity morphisms of a small generating set of $\cD$). Thus, by \cite[Proposition 5.5.4.16]{htt}, the collection $W$ is also strongly saturated and of small generation. Hence, the full subcategory $\cC_\infty \sseq \cC$ spanned by the $W$-local objects is presentable and reflective (see \cite[Proposition 5.5.4.15]{htt}). Using the universal property of the reflection $F''\colon \cC \to \cC_\infty$ (\cite[Proposition 5.5.4.20]{htt}), we can factor $F$ as a composition
    \[
        \cC \oto{F''} \cC_\infty \oto{F'} \cD
        \qin \Prl.
    \]
    It follows by construction, that $F'$ is conservative and hence, we get the desired factorization. The uniqueness is clear. 
\end{proof}

\begin{example}[Homology Localization]\label{Ex_Homology_Loc}
    Given a spectrum $E\in \Sp$, the functor 
    \[
        E\otimes(-)\colon \Sp \to \Sp \qin \Prl
    \]
    factors as a reflection onto the full subcategory of $E$-local spectra $\Sp_E\sseq \Sp$ followed by the restriction of $E\otimes(-)$ to $\Sp_E$, on which it is conservative. Similarly, the composition
    \[
        \Spc \oto{\Sigma_+^\infty} 
        \Sp \oto{E\otimes(-)} 
        \Sp
        \qin \Prl
    \]
    factors through a reflection onto the full subcategory $\Spc_E \sseq \Spc$ of $E$-local \textit{spaces}, followed by a conservative functor. These are the classical stable and unstable homology localizations constructed by Bousfield in
    \cite{bousfield1979localization,bousfield1975localization}.
\end{example}

The factorization system on $\Prl$ given by \Cref{PrL_Factor_Sys} dualizes  to give a second factorization system on $\Prr$. Namely, given $G\colon \cD \to \cC$ in $\Prr$, it can be factored uniquely as a composition
\[
    \cD \oto{G'} \cC_\infty \oto{G''} \cC,  
\]
where $G'$ admits a conservative left adjoint and $G''$ is fully faithful. One can view this as an ``epi-mono'' factorization system on $\Prr$. It would be nice to have a dual for \Cref{Conserv_Generates}, that characterizes functors with a conservative left adjoint in terms of their essential image (in particular, justifying the name ``epi-mono facorization''). For this, consider first the following dual version of \Cref{Gen_Presentable}:

\begin{conj}\label{Shauli_Thesis}
    Let $\cC$ be a presentable $\infty$-category and $\cC_0\sseq \cC$ a full subcategory. If $\cC_0$ is closed under small limits and $\kappa$-filtered colimits for some regular cardinal $\kappa$, then $\cC_0$ is presentable. 
\end{conj}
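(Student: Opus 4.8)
The plan is to reduce the statement to the single assertion that the inclusion $i\colon\cC_0\into\cC$ admits a left adjoint, and then to build that adjoint by a solution-set argument. First I would dispose of the easy reductions. Since $\cC$ is presentable it is complete (\cite[Corollary 5.5.2.4]{htt}), and because $\cC_0$ is closed under small limits the inclusion $i$ creates them; hence $\cC_0$ is complete and $i$ preserves small limits. Suppose we knew $i$ were a right adjoint, i.e. that $\cC_0$ is reflective, with reflection $L\colon\cC\to\cC_0$. Then, viewing $L$ as an endofunctor of $\cC$, it factors as $\cC\to\cC_0\xrightarrow{\ i\ }\cC$; the first functor preserves all colimits and, since $\cC_0$ is closed under $\kappa$-filtered colimits, $i$ preserves $\kappa$-filtered colimits, so the endofunctor $L$ preserves $\kappa$-filtered colimits and is therefore an accessible localization. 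By \cite[Proposition 5.5.4.15]{htt} this forces $\cC_0$ to be presentable. Thus the entire content of the conjecture is the existence of the left adjoint.

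To produce it I would invoke the $\infty$-categorical general adjoint functor theorem in its solution-set form (in the style of Nguyen--Raptis--Schrade): a small-limit-preserving functor out of a complete, locally small $\infty$-category has a left adjoint as soon as it satisfies the solution-set condition. Since $i$ already preserves small limits, the whole problem is the solution-set condition: for each $X\in\cC$ one needs a \emph{small} family of maps $\{X\to A_j\}_{j\in J}$ with $A_j\in\cC_0$ such that every map $X\to A$ into an object of $\cC_0$ factors through some $A_j$. Equivalently, one must show that each comma $\infty$-category $\cC_0\times_{\cC}\cC^{X/}$ admits a small weakly initial family.

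To attack this, fix $X$ and choose a regular cardinal $\lambda\ge\kappa$ for which $\cC$ is $\lambda$-accessible and $X$ is $\lambda$-compact. Given $X\to A$ with $A\in\cC_0$, write $A\simeq\colim_i A_i$ as a $\lambda$-filtered colimit of $\lambda$-compact objects of $\cC$; since $\Map_\cC(X,A)\simeq\colim_i\Map_\cC(X,A_i)$, the map factors through some $A_i\in\cC^\lambda$. The obstruction is that $A_i$ need not lie in $\cC_0$, so one must replace it by a \emph{bounded-size} approximation inside $\cC_0$ and assemble these into a set. This is exactly the step I expect to be the main obstacle, and it is where the $\infty$-categorical situation departs from the classical one. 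Over ordinary categories the relevant functors are faithful, so the reflection of a $\lambda$-compact object can be exhibited as a subobject of a product, or as a quotient, of objects of controlled cardinality; this yields a uniform bound on the ``rank'' of the reflection and hence a small solution set. In the $\infty$-categorical world there is no subobject/quotient calculus available, and a map $X\to A$ records not a factorization up to equality but an entire space of higher coherences that must be bounded simultaneously.

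The route I would try in place of the subobject argument is a purely homotopical size estimate: rather than bound a lattice of subobjects, bound the $\lambda$-compactness rank of the candidate reflections directly, by a simultaneous transfinite construction carried out inside the accessible $\infty$-category $\cC$ and controlled by the cardinality theory of accessible $\infty$-categories (\cite[Section 5.4]{htt}), using closure of $\cC_0$ under $\kappa$-filtered colimits to guarantee that the transfinite colimits of the construction remain in $\cC_0$. Concretely, one would aim to show that $\cC_0$ is \emph{accessible} with $i$ an accessible functor; combined with the completeness of $\cC_0$ established above, accessibility yields presentability and, a posteriori, reflectivity. Obtaining the requisite uniform bound in the presence of all higher coherences is the crux of the matter, and is precisely the point at which the argument currently stalls, leaving the statement at the level of a conjecture.
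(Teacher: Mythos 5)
The first thing to note is that the paper does not prove this statement at all: it appears as \Cref{Shauli_Thesis}, an open \emph{conjecture}, and the surrounding remark only records that the ordinary-category analogue is a theorem of Ad\'amek--Rosick\'y \cite{adamek1989reflections} and that the conjecture follows from the Vop\v{e}nka principle via \cite{rosicky2003left}. So there is no proof in the paper to compare yours against, and the correct verdict on your proposal is the one you yourself reach: it is not a proof, and the statement remains a conjecture. To your credit, you do not overclaim --- you stall exactly where the genuine difficulty lies.

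Your partial reductions are sound, and they essentially reconstruct the implicit content of the paper's remark. Closure under small limits makes $\cC_0$ complete with a limit-preserving inclusion; if the inclusion $i\colon \cC_0 \into \cC$ had a left adjoint, then the composite endofunctor would be an accessible localization (the left adjoint preserves all colimits, and $i$ preserves $\kappa$-filtered ones), whence $\cC_0$ would be presentable; conversely, presentability of $\cC_0$ would give the left adjoint by the adjoint functor theorem. Thus, under the stated hypotheses, the conjecture is \emph{equivalent} to reflectivity of $\cC_0$, which is precisely why the paper can assert that Vop\v{e}nka's principle (reflectivity of limit-closed subcategories of presentable $\infty$-categories) implies the conjecture. (A minor citation quibble: the fact you need is the presentability of accessible reflective localizations, which is the content of \cite[Section 5.5.1]{htt} rather than of \cite[Proposition 5.5.4.15]{htt}, which concerns strongly saturated classes of small generation; this does not affect the logic.) Your diagnosis of the obstruction is also the correct one and agrees with the paper's own commentary: the classical argument secures the solution-set condition through faithfulness, well-poweredness, and subobject/quotient size estimates, and this calculus has no analogue for $\infty$-categories, where the relevant functors can be arbitrarily non-truncated on mapping spaces. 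What is missing from your write-up is any actual mechanism for the uniform compactness bound --- the ``simultaneous transfinite construction'' is named but never carried out --- so, read as a proof, it has a genuine gap at exactly that step; read as an explanation of why the statement is stated as a conjecture, it is accurate.
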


\begin{rem}
    For ordinary presentable categories, \Cref{Shauli_Thesis} was proven in \cite{adamek1989reflections} and it seems extremely likely to hold for presentable $\infty$-categories as well (though the proof does not generalize in a straightforward way). In particular, by \cite{rosicky2003left}, under a large cardinal axiom known as the \textit{Vop\v{e}nka principle}, every limit closed subcategory of a presentable $\infty$-category is reflective, which implies \Cref{Shauli_Thesis}.
\end{rem}

Given \Cref{Shauli_Thesis}, one can show that a functor in $\Prr$ admits a conservative left adjoint, if and only if its essential image generates the target under small limits and sufficiently filtered colimits.

\begin{prop}\label{PrR_Epi_Mono}
    Let $G\colon \cD \to \cC$ be a functor in $\Prr$ with left adjoint $F$. If for every regular cardinal $\kappa$, the essential image $G(\cD)$ generates $\cC$ under limits and $\kappa$-filtered colimits, then $F$ is conservative. The converse holds assuming \Cref{Shauli_Thesis}.
\end{prop}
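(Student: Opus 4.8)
The plan is to prove the statement by dualizing it to a claim about left adjoints and then run a Yoneda-style conservativity argument, exactly paralleling the proof of \Cref{Conserv_Generates}, but with the roles of the generation conditions swapped according to \Cref{Shauli_Thesis}. The forward direction (generation $\Rightarrow$ conservative) should not require \Cref{Shauli_Thesis} at all; only the converse uses it to produce a presentable subcategory from a closure condition.

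For the \textbf{forward direction}, I would argue as follows. Suppose $G(\cD) \sseq \cC$ generates $\cC$ under limits and $\kappa$-filtered colimits for every regular $\kappa$. I want to show the left adjoint $F\colon \cC \to \cD$ is conservative. Dually to the mapping-space identity used in \Cref{Conserv_Generates}, for an object $Y\in\cD$ the functor $\Map_\cC(-,G(Y))\colon \cC^\op \to \Spc$ is corepresented (contravariantly) via $\Map_\cD(F(-),Y) \simeq \Map_\cC(-,G(Y))$. The key point is that a map $f\colon A \to B$ in $\cC$ is inverted by $F$ iff $F(f)$ is an isomorphism in $\cD$, which by Yoneda in $\cD$ holds iff $\Map_\cD(F(B),Y) \to \Map_\cD(F(A),Y)$ is an equivalence for all $Y\in\cD$, i.e.\ iff $\Map_\cC(B,G(Y)) \to \Map_\cC(A,G(Y))$ is an equivalence for all $Y$. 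Thus $F$ is conservative iff the collection $\{\Map_\cC(-,W)\}_{W\in G(\cD)}$ jointly detects isomorphisms in $\cC$. Since these contravariant functors send colimits in $\cC$ to limits and send limits in $\cC$ to the appropriate limits, the class of objects $W$ for which $\Map_\cC(-,W)$ preserves the relevant (co)limit structure is closed under limits and $\kappa$-filtered colimits; because $G(\cD)$ generates $\cC$ under exactly these operations, joint conservativity on $G(\cD)$ propagates to all of $\cC$, giving that $F$ is conservative.

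For the \textbf{converse}, assume \Cref{Shauli_Thesis} and that $F$ is conservative; I must show $G(\cD)$ generates $\cC$ under limits and $\kappa$-filtered colimits for every $\kappa$. Let $\cC_0 \sseq \cC$ be the smallest full subcategory containing $G(\cD)$ and closed under small limits and $\kappa$-filtered colimits for all $\kappa$; equivalently, closed under small limits and $\kappa_0$-filtered colimits for a single sufficiently large $\kappa_0$. By \Cref{Shauli_Thesis}, $\cC_0$ is presentable, and since its inclusion preserves small limits and $\kappa_0$-filtered colimits it is an accessible limit-preserving functor, hence a right adjoint by the adjoint functor theorem; let $L$ be its left adjoint (a reflection onto $\cC_0$). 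I would then show that the assumption $\cC_0 \ne \cC$ contradicts conservativity of $F$: if some object of $\cC$ is not in $\cC_0$, the reflection unit $\eta\colon X \to \iota L(X)$ is not an isomorphism for that $X$, yet $G(Y) \in \cC_0$ for all $Y$ forces $\Map_\cC(\iota L(X), G(Y)) \xrightarrow{\sim} \Map_\cC(X,G(Y))$, i.e.\ $F(\eta)$ is an equivalence in $\cD$, contradicting conservativity of $F$. Hence $\cC_0 = \cC$.

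The \textbf{main obstacle} I anticipate is the converse direction, specifically the bookkeeping around the cardinal $\kappa$: the statement quantifies over \emph{all} regular $\kappa$, so I must be careful that closing under limits and $\kappa$-filtered colimits for every $\kappa$ can be replaced by closure for a single large $\kappa_0$ without losing the generation conclusion, and that the resulting $\cC_0$ genuinely satisfies the hypotheses of \Cref{Shauli_Thesis}. The forward direction is essentially a formal dualization of \Cref{Conserv_Generates} and should be routine; the delicate part is confirming that the reflection $L$ onto $\cC_0$ interacts correctly with $F$ so that a non-trivial unit produces a non-trivial kernel for $F$, which is where the conservativity hypothesis is converted into the generation statement.
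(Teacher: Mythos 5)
The genuine gap is in your forward direction, at exactly the step you wave through. Having reduced to the claim that the family $\{\Map_\cC(-,W)\}_{W\in G(\cD)}$ jointly detects isomorphisms, fix $f\colon X \to Y$ with $F(f)$ an isomorphism and consider the class $\cC_f$ of objects $Z$ such that $f^*\colon \Map_\cC(Y,Z)\to\Map_\cC(X,Z)$ is an equivalence (the $f$-local objects). This class contains $G(\cD)$ and is closed under limits, but it is \emph{not} closed under $\kappa$-filtered colimits for an arbitrary regular $\kappa$: that requires $\Map_\cC(X,-)$ and $\Map_\cC(Y,-)$ to commute with $\kappa$-filtered colimits, i.e.\ $X$ and $Y$ to be $\kappa$-compact. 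Your stated justification --- that contravariant hom functors turn colimits into limits --- concerns the wrong variable, and taken at face value it would ``prove'' that $\cC_f$ is closed under \emph{all} colimits, which is false in general ($f$-local objects need not be closed even under filtered colimits when $X,Y$ are not compact). The missing idea, which is the paper's key move and the entire reason the hypothesis quantifies over \emph{all} regular cardinals, is to choose $\kappa$ \emph{after} fixing $f$, large enough that $X$ and $Y$ are both $\kappa$-compact; then $\cC_f$ is closed under limits and $\kappa$-filtered colimits, the generation hypothesis for that particular $\kappa$ gives $\cC_f=\cC$, and Yoneda gives that $f$ is an isomorphism.

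Your converse direction is essentially the paper's argument: generate a subcategory from $G(\cD)$, invoke \Cref{Shauli_Thesis} for presentability, the adjoint functor theorem for a reflection, and conservativity of $F$ to force the reflection to be trivial. (The paper factors both $G$ and $F$ through the generated subcategory and argues that the reflection $F''$ is conservative, hence an equivalence; your variant, showing the reflection unit $\eta\colon X \to \iota L(X)$ is $F$-inverted, is the same argument in different clothing, and in fact avoids the paper's need to choose $\kappa$ so that $G$ preserves $\kappa$-filtered colimits, since you never factor $G$ itself.) One correction there: your claim that ``closed under $\kappa$-filtered colimits for all $\kappa$'' is equivalent to ``closed under $\kappa_0$-filtered colimits for a single large $\kappa_0$'' is wrong --- the former is equivalent to closure under $\omega$-filtered colimits, and the generated subcategories genuinely shrink as $\kappa$ grows. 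The clean fix is to fix an \emph{arbitrary} regular $\kappa$, run your reflection argument with the subcategory generated under limits and $\kappa$-filtered colimits, and observe that the argument is uniform in $\kappa$; this also discharges the quantifier ``for every $\kappa$'' in the conclusion.
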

\begin{proof}
    Assume first that for every regular cardinal $\kappa$, the essential image $G(\cD)$ generates $\cC$ under limits and $\kappa$-filtered colimits. Let $f\colon X \to Y$ be a map in $\cC$ such that $F(f)$ is an isomorphism in $\cD$, and let $\cC_\infty\sseq \cC$ be the full subcategory of objects $Z\in\cC$ for which the induced map 
    \[
        f^*\colon \Map_\cC(Y,Z) \longrightarrow \Map_\cC(X,Z)
    \]
    is an isomorphism. Since $F(f)$ is an isomorphism, we have $G(\cD)\sseq \cC_\infty$ by the adjunction $F\dashv G$. The full subcategory $\cC_\infty$ is clearly closed under limits in $\cC$. Moreover, if $\kappa$ is large enough so that both $X$ and $Y$ are $\kappa$-compact, then $\cC_\infty$ is also closed under $\kappa$-filtered colimits in $\cC$. By our assumption, it follows that $\cC_\infty = \cC$ and hence $f$ is an isomorphism by the Yoneda lemma. Therefore $F$ is conservative.
    
    Conversely, assume $F$ is conservative and let $\kappa$ be a sufficiently large regular cardinal so that $G$ preserves $\kappa$ filtered colimits. Let $\cC_\infty \sseq \cC$ be the full subcategory generated by $G(\cD)$ under small limits and $\kappa$-filtered colimits. Denoting by $G''\colon \cC_\infty \into \cC$ the fully faithful embedding, the functor $G$ can be uniquely factored as
    \[
        \cD  \oto{\;G'\;} \cC_\infty \oto{G''} \cC
        \qin \Prr.
    \] 
    As in the proof of \Cref{PrL_Epi_Mono}, both functors $G'$ and $G''$ preserve small limits and $\kappa$-filtered colimits. Now, by \Cref{Shauli_Thesis}, the $\infty$-category $\cC_\infty$ is presentable. Hence, we can invoke the adjoint functor theorem to deduce that $G'$ and $G''$ admit left adjoints $F'$ and $F''$ respectively. It follows that $F$ factors as
    \[
        \cC \oto{F''} \cC_\infty \oto{\;F'\;} \cD
        \qin \Prl.
    \]
    Since $F$ is conservative, $F''$ is conservative. However, $F''$ is a reflection, being the left adjoint of the fully faithful functor $G''$, and so is conservative if and only if it is an equivalence. We deduce that $\cC_\infty = \cC$ and thus that $G(\cD)$ generates $\cC$ under small limits and $\kappa$-filtered colimits.
\end{proof}

\section{Monads and Monadicity}

In this section, we review some background material on monads, monadicity and (co)monadic resolutions in $\infty$-categories. Though the material is fairly standard, proofs of some statements, which were hard to locate in the literature, are provided (most notably   \Cref{Super_Sulyma}). The main takeaway from this section is  \Cref{Free2Cmpl} and \Cref{Cmpl2Free}, which provide control from ``below and above'' on the convergence of the monadic tower. We conclude by considering the examples of stable and unstable \textit{homology completion} (\Cref{Nil_Completion,Unstable_Nil_Completion}). 

\subsubsection{Monads and algebras}

We recall from \cite[Section 4.7.3]{HA} the fundamental facts regarding monads and their algebras.
For every $\infty$-category $\cC$, composition of endofunctors induces a canonical monoidal structure on the $\infty$-category
\[
    \End(\cC):=\Fun(\cC,\cC).
\]
A \textit{monad} on $\cC$ is an algebra object in $\End(\cC)$ with respect to the said monoidal structure. 
Using the natural action of $\End(\cC)$ on $\cC$, 
for every such monad $T\in\alg(\End(\cC))$, one can form the $\infty$-category $\Mod_T(\cC)$ of \textit{$T$-algebras}\footnote{The terminology is somewhat unfortunate. It would be more consistent to call these objects \textit{$T$-modules}.} in $\cC$. This construction comes with a forgetful functor $U\colon \Mod_T(\cC) \to \cC$, which is conservative and
admits a left adjoint. The left adjoint takes an object $X\in\cC$ to the object $TX\in\cC$ equipped with the \textit{free $T$-algebra} structure induced from the monad structure of $T$.
Moreover, as explained in \cite[Remark 4.7.3.8]{HA}, the construction of the $\infty$-category of algebras over a monad is \textit{functorial}. Namely, we get a functor 
\[
    \Mod_{(-)}(\cC)\colon
    \alg(\End(\cC))^\op \to \Catr_{/\cC},
\]
taking a monad $T$ on $\cC$ to the forgetful functor $\Mod_T(\cC) \to \cC$.

When $\cC$ is \textit{presentable}, it makes sense to restrict attention to \textit{accessible} monads. That is, to those which preserve $\kappa$-filtered colimits for some sufficiently large regular cardinal $\kappa$. It is a pleasant fact, that in this case the $\infty$-category of $T$-algebras is also presentable.

\begin{prop}[{\cite[Proposition B-6]{gepner2016universality}}]\label{ModT_Presentable}
    Let $T$ be an accessible monad on a presentable $\infty$-category $\cC$. The $\infty$-category $\Mod_T(\cC)$ is also presentable. 
\end{prop}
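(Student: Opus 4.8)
The plan is to establish the two defining ingredients of presentability separately---that $\Mod_T(\cC)$ admits all small limits and that it is accessible---and then to conclude via the $\infty$-categorical analogue of the classical fact that a complete accessible $\infty$-category is automatically presentable (equivalently, cocomplete). Completeness is the easy half: the forgetful functor $U\colon \Mod_T(\cC) \to \cC$ creates all limits that exist in $\cC$, a general feature of forgetful functors from algebras over a monad (see \cite{HA}). Since $\cC$ is presentable and hence complete, it follows that $\Mod_T(\cC)$ is complete and that $U$ preserves all limits.

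The substance is accessibility, which is where the hypothesis that $T$ is accessible enters. I would fix a regular cardinal $\lambda \geq \kappa$ for which $T$ preserves $\lambda$-filtered colimits and $\cC$ is $\lambda$-compactly generated. Since $T$ preserves $\lambda$-filtered colimits, $U$ creates them, so $\Mod_T(\cC)$ admits $\lambda$-filtered colimits and $U$ is a conservative, $\lambda$-accessible functor. I would then produce accessibility in one of two ways. The first is to exhibit $\Mod_T(\cC)$ as a limit, in $\widehat{\Cat}$, of a diagram whose terms are copies of $\cC$ and whose structure maps are assembled out of $T$ and its iterates (an Eilenberg--Moore/lax-limit presentation); each term is accessible and each structure functor is accessible, so the limit is accessible by the closure of accessible $\infty$-categories under small limits along accessible functors \cite[\S 5.4]{htt}. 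The second, more hands-on route is to observe that for a set $S$ of $\lambda$-compact generators of $\cC$, the free algebras $F(s)$ are $\lambda$-compact in $\Mod_T(\cC)$---because $\Map_{\Mod_T(\cC)}(F(s),-) \simeq \Map_\cC(s, U(-))$ and both $\Map_\cC(s,-)$ and $U$ preserve $\lambda$-filtered colimits---and that they are jointly conservative, since $U$ is conservative and $S$ generates $\cC$.

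Finally I would combine the pieces: $\Mod_T(\cC)$ is complete and accessible, hence presentable. The main obstacle is the accessibility step. In the limit-presentation route the difficulty lies entirely in the higher coherences---coherently writing $\Mod_T(\cC)$ as such a limit and checking that the assembled structure functors are genuinely accessible---rather than in any new idea. In the generator route the difficulty is that joint conservativity of the $F(s)$ does not by itself furnish the $\lambda$-filtered generation required for accessibility, so one must still feed in completeness (or cocompleteness) to upgrade it, via the implication above. For this reason the cleanest packaging keeps accessibility and completeness separate and invokes the complete-accessible-implies-presentable theorem only at the very end.
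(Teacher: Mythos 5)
First, a point of reference: the paper does not actually prove this proposition --- it is imported wholesale from \cite[Proposition B-6]{gepner2016universality} --- so your proposal has to be measured against that cited proof. Your strategy (completeness because $U$ creates limits; accessibility by a limit-closure argument; then ``accessible and complete implies presentable'') is the $\infty$-categorical rendering of the classical Ad\'amek--Rosick\'y argument for ordinary categories, and it is a genuinely different route from the cited one. The proof in \cite{gepner2016universality} is more hands-on: as the remark that follows the proposition in the paper records, it shows the sharper, cardinal-controlled statement that if $\cC$ is $\kappa$-compactly generated and $T$ is $\kappa$-accessible then $\Mod_T(\cC)$ is $\kappa$-compactly generated --- the ingredients being exactly the ones you isolate in your ``generator route'' (free algebras on $\kappa$-compact objects are $\kappa$-compact and jointly conservative). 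Your argument, going through abstract accessibility closure and a completeness-implies-presentability theorem, yields presentability with no control over the presentability rank, so it does not recover that sharper statement.

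Two steps of your proposal carry essentially all the weight, and neither is routine. (i) The Eilenberg--Moore category is a \emph{lax} limit of the diagram encoding $T$, not a strict one: the honest limit in $\widehat{\Cat}$ of a diagram of copies of $\cC$ with structure maps $T$ and its iterates computes coherent $T$-\emph{fixed points}, not $T$-algebras, and does not even see the monad multiplication. To invoke the closure of accessible $\infty$-categories under small limits along accessible functors (\cite[Section 5.4]{htt}) you must first rewrite $\Mod_T(\cC)$ as a genuine small limit, for instance via the weighted-limit description of Eilenberg--Moore objects in \cite{riehl2016Adjunctions}: the weight there is small, so $\Mod_T(\cC)$ becomes a totalization of terms of the form $\Fun(K,\cC)$ with $K$ small, whose structure maps are built from precomposition (which preserves all colimits) and postcomposition by $T$ (accessible by hypothesis). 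This conversion is exactly where your ``higher coherences'' live; it is doable, but it is the crux rather than a verification. (ii) The final implication ``accessible $+$ small limits $\Rightarrow$ presentable'' is classical for ordinary categories, but it is \emph{not} available off-the-shelf in \cite{htt}, and it is not formal: the classical proof runs through Freyd-type initial-object arguments that do not carry over to $\infty$-categories. It was established in the $\infty$-categorical setting by Nguyen--Raptis--Schrade in their work on adjoint functor theorems. With that reference your argument closes up; without it, this step would be a genuine gap (and the unavailability of such a result at the time is presumably why \cite{gepner2016universality} argues via compact generation instead). Finally, you are right to reject your generator route as a stand-alone proof of accessibility: compact, jointly conservative free algebras do not yield accessibility without already knowing cocompleteness, so that route is circular on its own.
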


\begin{rem}
    The proof of \cite[Proposition B-6]{gepner2016universality} shows the following somewhat sharper statement: If $\cC$ is $\kappa$-compactly generated and $T$ is $\kappa$-accessible, then $\Mod_T(\cC)$ is $\kappa$-compactly generated. 
\end{rem}

\subsubsection{Monadic functors}

Every adjunction of $\infty$-categories gives rise to a monad as follows. Let $G\colon \cD \to \cC$ be a right adjoint functor with a left adjoint $F\colon \cC \to \cD$. Using the adjunction data, one can endow the composition of $G$ and $F$ with a monad structure,
\[
    \mdef{T} := GF \qin \alg(\End(\cC)).
\]

Moreover, by \cite[Proposition 4.7.3.3]{HA}, we have a natural factorization
\[
    \xymatrix@C=10ex{ & \Mod_T(\cC)\ar[d]^U\\
    \cD\ar[r]_{G}\ar@{-->}[ru]^{\mdef{\mnd{G}}} & \cC.
    }
\]
We note that since $U$ is conservative, $G$ is conservative if and only if $\mnd{G}$ is conservative. In general, $\mnd{G}$ need not be an equivalence (even if $G$ is conservative). 

\begin{defn}
    If $\mnd{G}$ is an equivalence, $G$ is called \tdef{monadic}.
\end{defn}

The following is a rather degenerate, yet still useful, instance of monadicity:
\begin{prop}\label{Idemp_Monad}
    Let $G\colon \cD \to \cC$ be a right adjoint. If $G$ is fully faithful, then it is monadic. 
\end{prop}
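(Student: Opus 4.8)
The plan is to verify the two hypotheses of the Barr--Beck--Lurie monadicity theorem \cite[Theorem 4.7.3.5]{HA}: that $G$ is conservative, and that $\cD$ admits colimits of $G$-split simplicial objects which $G$ preserves. Conservativity is immediate, as every fully faithful functor is conservative. Before treating the colimit condition, I would record the structural input driving the argument. Full faithfulness of $G$ means the counit $\varepsilon\colon FG \to \mathrm{id}_\cD$ is an equivalence, so the multiplication $\mu = G\varepsilon_F$ of the monad $T = GF$ is invertible and $T$ is idempotent; concretely, an object $X\in\cC$ lies in the essential image of $G$ if and only if the unit $\eta_X\colon X \to GFX = TX$ is an equivalence (one direction from the triangle identity together with invertibility of $\varepsilon$, the other being tautological).

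For the colimit condition, let $\cD_\bullet\colon \Delta^\op \to \cD$ be $G$-split, so that $G\cD_\bullet$ extends to a split simplicial object of $\cC$. Such objects are absolute colimit diagrams; writing $C_{-1}\in\cC$ for the resulting colimit, $C_{-1}$ is in particular a retract of $C_0 = G\cD_0$. The crucial observation is that the essential image of $G$ is closed under retracts: since $\eta$ is natural, the retract datum for $C_{-1}$ maps under $\eta$ to an exhibition of $\eta_{C_{-1}}$ as a retract of $\eta_{C_0}$; the latter is an equivalence because $C_0$ lies in the essential image, and as equivalences are closed under retracts, $\eta_{C_{-1}}$ is an equivalence, whence $C_{-1} \simeq G(D_{-1})$ for a unique $D_{-1}\in\cD$.

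It remains to promote this to a colimit of $\cD_\bullet$ in $\cD$ preserved by $G$. Every value of the augmented split diagram now lies in the essential image of $G$, and postcomposition with $G$ is fully faithful on $\Fun(\Delta_+^\op,\cD) \to \Fun(\Delta_+^\op,\cC)$; hence the augmented diagram is $G$ of an augmented simplicial object $\overline{\cD}_\bullet$ of $\cD$ extending $\cD_\bullet$ with augmentation $D_{-1}$. Using full faithfulness twice to identify $\Map_\cD(D_{-1},W) \simeq \Map_\cC(C_{-1},GW)$ and $\Map_\cD(\cD_n,W) \simeq \Map_\cC(G\cD_n,GW)$, together with $C_{-1} = \colim G\cD_\bullet$, one checks that $\overline{\cD}_\bullet$ is a colimit cone in $\cD$ --- this is precisely the fact that fully faithful functors reflect colimits \cite[Proposition 2.4.7]{riehl2018elements}. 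Since $G\overline{\cD}_\bullet$ is by construction a colimit cone, $G$ preserves this colimit, and both hypotheses of the monadicity theorem hold, so $\mnd{G}$ is an equivalence.

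The one non-formal step is locating the split colimit $C_{-1}$ within the essential image of $G$; everything afterward is bookkeeping licensed by full faithfulness, so I expect that to be the crux. The naturality-of-$\eta$ argument is the cleanest route and, importantly, avoids any idempotent-completeness or cocompleteness hypothesis on $\cD$. A shorter alternative would sidestep the monadicity theorem and identify $\Mod_T(\cC)$ for the idempotent monad $T$ with the localization of $\cC$ at the $\eta$-equivalences, matching it with $\cD$ along $G$; I would nonetheless keep the monadicity-theorem proof as primary, precisely because it makes transparent that no colimit assumptions on $\cD$ are needed.
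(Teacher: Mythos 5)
Your proof is correct, but it takes a genuinely different route from the paper's. The paper's argument is a two-line appeal to the theory of idempotent monads: since $G$ is fully faithful, the counit $FG \to \Id$ is an equivalence, so the monad $T = GF$ is idempotent, and \cite[Proposition 4.8.2.4]{HA} then identifies a $T$-algebra structure with the \emph{property} of lying in the essential image of $T$ (equivalently, of $G$), giving $\Mod_T(\cC) \simeq \cD$ directly --- this is also what justifies the paper's slogan that a reflective property is a special case of an algebraic structure. You instead verify the hypotheses of the monadicity theorem (\Cref{Monadicity}) by hand, and your crux --- that the essential image of $G$ is characterized by invertibility of the unit and is therefore closed under retracts, hence contains the split colimit $C_{-1}$, after which full faithfulness lets you lift the colimit cone to $\cD$ and reflect its universal property --- is sound: the retract argument via naturality of $\eta$, the lifting of diagrams valued in the essential image along the fully faithful $\Fun(\Delta_+^\op,\cD) \to \Fun(\Delta_+^\op,\cC)$, and the reflection of colimits by fully faithful functors are all correct. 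What each approach buys: the paper's proof is shorter and more conceptual, but outsources the real content to Lurie's identification of algebras over an idempotent monad with a reflective subcategory; yours is longer but makes explicit exactly which colimits exist and why, and makes visibly manifest that no cocompleteness or idempotent-completeness hypothesis on $\cD$ is needed (a fact that is true but hidden in the paper's reference). Your closing alternative --- identifying $\Mod_T(\cC)$ for the idempotent $T$ with a localization --- is essentially the paper's route, so you correctly perceived both options.
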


This can be interpreted as saying that a \textit{reflective property} is a special case of an \textit{algebraic structure}. 
\begin{proof}
    Let $F$ be the left adjoint of $G$. Since $G$ is fully faithful, the counit map is an isomorphism  $FG\iso \Id$. Hence, the associated monad $T = GF$ is \textit{idempotent}, in the sense that the multiplication map is an isomorphism  $T^2 \iso T$. Therefore, by \cite[Proposition 4.8.2.4]{HA}, the structure of a $T$-algebra on an object of $\cC$ is just the \textit{property} of being in the essential image of $T$, which is the same as the essential image of $G$. 
\end{proof}

It is useful to have a criterion for recognizing the construction $G\mapsto\mnd{G}$ in an abstract situation. The next proposition says that $\mnd{G}$ is \textit{characterised} by factoring $G$ through a monadic functor with the same associated monad as $G$. 

\begin{prop}\label{Char_Mnd}
    Let $G\colon \cD \to \cC$ be a composition of right adjoint functors
    \(
        \cD \oto{G_1} \cE \oto{G_0} \cC
    \)
    with left adjoints $F_1$ and $F_0$ respectively. If $G_0$ is monadic and the induced map of monads
    \[
        T_0 := G_0F_0 \to G_0(G_1F_1)F_0 =: T
    \]
    is an isomorphism, then there is an equivalence $\cE \simeq \Mod_T(\cC)$ under which $G_0$ corresponds to $U\colon \Mod_T(\cC) \to \cC$ and $G_1$ corresponds to $\mnd{G}\colon \cD \to \Mod_T(\cC)$. 
\end{prop}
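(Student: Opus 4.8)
The plan is to prove \Cref{Char_Mnd} by reducing it to the monadicity of $G_0$ and then carefully tracking the comparison functor. Since $G_0$ is monadic, there is an equivalence $\cE \simeq \Mod_{T_0}(\cC)$ under which $G_0$ corresponds to the forgetful functor $U_0\colon \Mod_{T_0}(\cC) \to \cC$ and $F_0$ corresponds to the free algebra functor. Under the hypothesis that the induced map $T_0 \to T$ is an isomorphism of monads, we have $\Mod_{T_0}(\cC) \simeq \Mod_T(\cC)$, so $\cE \simeq \Mod_T(\cC)$ with $G_0$ corresponding to $U\colon \Mod_T(\cC)\to\cC$. This identifies the first two claims; the real content is showing that $G_1$ corresponds to $\mnd{G}$ under this identification.

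To handle the last point, first I would observe that the monad associated to the composite adjunction $F = F_1 F_0 \dashv G = G_0 G_1$ is, by the hypothesis, the same monad $T$ that governs $\cE \simeq \Mod_T(\cC)$. The comparison functor $\mnd{G}\colon \cD \to \Mod_T(\cC)$ is the canonical one produced by \cite[Proposition 4.7.3.3]{HA} from the composite adjunction $F \dashv G$, and it satisfies $U \circ \mnd{G} \simeq G$. On the other hand, under the equivalence $\cE \simeq \Mod_T(\cC)$, the functor $G_1\colon \cD \to \cE$ satisfies $U \circ G_1 \simeq G_0 \circ G_1 = G$ as well. So both $G_1$ and $\mnd{G}$ are functors $\cD \to \Mod_T(\cC)$ lying over $G$ via the conservative forgetful functor $U$.

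The cleanest way to finish is to appeal to the universal property of $\Mod_T(\cC)$ as a lax equalizer / the functoriality recorded in the excerpt: the functor $\Mod_{(-)}(\cC)\colon \alg(\End(\cC))^\op \to \Catr_{/\cC}$ together with the fact that, for a fixed monad $T$, the comparison functor $\mnd{G}$ is the \emph{terminal} object among right adjoints $\cD \to \cE'$ over $\cC$ whose associated monad is $T$ and for which $\cE' \to \cC$ is monadic. More concretely, I would produce a natural transformation between $G_1$ and $\mnd{G}$ as maps $\cD \to \Mod_T(\cC)$ compatible with the $U$-structure, coming from the identification of their associated monads, and then invoke conservativity/faithfulness of the relevant structure to upgrade it to an equivalence. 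Equivalently, one checks that $G_1$, being a right adjoint into $\Mod_T(\cC) \simeq \cE$ whose composite with $U$ is $G$ and which is compatible with the free functors (since $F_0$ corresponds to the free $T$-algebra functor and the unit of $F\dashv G$ refines the one of $F_0\dashv G_0$), must agree with the canonical comparison $\mnd{G}$.

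The main obstacle will be articulating this universal property coherently at the $\infty$-categorical level: rather than merely exhibiting $G_1$ and $\mnd{G}$ as two functors over $\cC$ with isomorphic associated monads, I must produce a genuine equivalence \emph{and} check it intertwines $G_1$ with $\mnd{G}$ as witnessed data, not just objectwise. I expect the hypothesis that $T_0 \to T$ is an isomorphism to be exactly what makes the unit and multiplication coherences match up, so that the comparison functor built from the composite adjunction factors through the equivalence $\cE \simeq \Mod_T(\cC)$. Making this factorization precise — ideally by expressing $\mnd{G}$ via the functoriality of $\Mod_{(-)}(\cC)$ applied to the monad map $T_0 \to T$ — is where the care is needed, and where I would spend most of the write-up.
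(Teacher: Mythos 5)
Your plan is essentially the paper's proof: the paper simply writes down the naturality square for the comparison-functor construction (the functoriality of $\Mod_{(-)}(\cC)$ that you invoke, applied to the monad map $T_0 \to T$), in which $\mnd{G}_0\colon \cE \to \Mod_{T_0}(\cC)$ is an equivalence because $G_0$ is monadic and $\Mod_{T}(\cC) \to \Mod_{T_0}(\cC)$ is an equivalence because $T_0 \to T$ is an isomorphism, and this single commutative diagram is exactly the coherent identification of $G_1$ with $\mnd{G}$ whose construction you correctly flag as the main point. One small correction to your informal description: since $\Mod_{(-)}(\cC)$ is a fully faithful right adjoint with the associated-monad functor as its left adjoint reflection, the comparison functor $\mnd{G}$ is the reflection \emph{unit}, hence \emph{initial} (not terminal) among maps from $\cD$ to monadic functors over $\cC$.
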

\begin{proof}
    By naturality, we have a commutative diagram
    \[
        \begin{tikzcd}
        	\cD &&&& \cE \\
        	& {\Mod_{T}(\cC)} && {\Mod_{T_0}(\cC)} \\
        	&& {\cC.}
        	\arrow["{\mnd{G}}", from=1-1, to=2-2]
        	\arrow["{G_1}", from=1-1, to=1-5]
        	\arrow["{\qquad\mnd{G}_0\!\!\!\!\!}"'{pos=0.65}, from=1-5, to=2-4]
        	\arrow["{\sim}",dashed, from=2-2, to=2-4]
        	\arrow[from=2-2, to=3-3]
        	\arrow[from=2-4, to=3-3]
        	\arrow["G"', shift right=1, curve={height=21pt}, from=1-1, to=3-3]
        	\arrow["{G_0}", shift left=1, curve={height=-21pt}, from=1-5, to=3-3]
            \end{tikzcd}
        \]
        The dashed arrow is an equivalence, since it is induced by an isomorphism of monads $T_0 \iso T$ and the functor $\mnd{G}_0$ is an equivalence, since $G_0$ is assumed to be monadic. Thus, the claim follows.
\end{proof}

The celebrated Barr-Beck-Lurie \textit{monadicity theorem} provides necessary and sufficient conditions for a right adjoint to be monadic. To state it, let us recall the following terminology from  \cite[Definition 4.7.2.2]{HA}. A \textit{split} simplicial object is an augmented simplicial object which admits an extra degeneracy. An important property of a split simplicial object is that it always exhibits the augmentation as the colimit of the simplicial diagram and, moreover, this colimit is preserved by \textit{any} functor. Given a functor $G\colon\cD \to \cC$, a \textit{$G$-split} simplicial object is a simplicial object in $\cD$, such that after applying $G$, it can be extended to a split simplicial object in $\cC$.

\begin{thm}[Monadicity Theorem, {\cite[Theorem 4.7.5.3]{HA}}]\label{Monadicity}
    Let $G\colon \cD \to \cC$ be a right adjoint. The functor $G$ is monadic, if and only if $G$ is conservative and $\cD$ admits, and $G$ preserves,  $G$-split simplicial colimits. 
\end{thm}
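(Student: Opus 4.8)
The plan is to analyze the comparison functor $\mnd{G}\colon \cD \to \Mod_T(\cC)$ directly, by producing an explicit candidate inverse and checking that it assembles into an adjoint equivalence. I will treat the two implications separately, as their content is quite different. Throughout, write $F\dashv G$ with unit $\eta\colon \Id_\cC \to T = GF$ and counit $\epsilon\colon FG \to \Id_\cD$. The single structural fact I will lean on repeatedly is that a split simplicial object exhibits its augmentation as a colimit preserved by \emph{every} functor; this is what makes $G$-split colimits tractable.

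For the ``only if'' direction, suppose $G$ is monadic, so that we may identify $G$ with the forgetful functor $U\colon \Mod_T(\cC) \to \cC$. That $U$ is conservative is immediate, since a morphism of $T$-algebras is an equivalence precisely when its underlying morphism in $\cC$ is. It remains to show that $U$ creates, and in particular preserves, colimits of $U$-split simplicial objects. Given such an $M_\bullet$, the simplicial object $U M_\bullet$ extends to a split one in $\cC$, so $\colim U M_\bullet$ exists and is preserved by $T$ (being split, it is preserved by everything). The $T$-algebra structure maps then assemble, using that $T$ commutes with this particular colimit, into a $T$-algebra structure on $\colim U M_\bullet$ realizing it as $\colim M_\bullet$ in $\Mod_T(\cC)$, with $U$ preserving the colimit by construction.

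For the ``if'' direction, assume $G$ is conservative and that $\cD$ admits, and $G$ preserves, $G$-split simplicial colimits. I would construct a left adjoint $L$ to $\mnd{G}$ via the two-sided bar construction: to a $T$-algebra $(M, a\colon TM \to M)$ assign the simplicial object $\mathrm{Bar}_\bullet(F,T,M)$ in $\cD$ with $n$-simplices $F T^n M = F(GF)^n M$, whose faces are built from the multiplication of $T$, the left action $a$, and the right action $\epsilon F\colon FGF \to F$ of $T$ on $F$; then set $L(M) := \colim_{\Delta^\op} \mathrm{Bar}_\bullet(F,T,M)$. Applying $G$ degreewise recovers $\mathrm{Bar}_\bullet(T,T,M)$, the standard bar resolution of the $T$-algebra $M$, which is split via $\eta$; hence $\mathrm{Bar}_\bullet(F,T,M)$ is $G$-split, its realization exists in $\cD$ by hypothesis, and $L$ is well defined. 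Granting that $L$ is genuinely left adjoint to $\mnd{G}$, it remains to verify that the unit and counit of $L\dashv\mnd{G}$ are equivalences. For the counit $L\mnd{G}(Y) \to Y$ at $Y\in\cD$, note that $\mnd{G}(Y)$ is $GY$ with its canonical algebra structure and that $\mathrm{Bar}_\bullet(F,T,GY)$ is exactly the $G$-split canonical resolution of $Y$; thus $G$ preserves its realization and the augmentation to $Y$ becomes an equivalence after applying $G$, whence conservativity of $G$ forces the counit itself to be an equivalence. For the unit $M \to \mnd{G}L(M)$ at a $T$-algebra $M$, it suffices (by conservativity of $U$) to check after applying $U$, and since $\mathrm{Bar}_\bullet(F,T,M)$ is $G$-split we get $GL(M) \simeq \colim_{\Delta^\op} T^{n+1}M \simeq M$ as the realization of a split simplicial object. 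Hence $\mnd{G}$ is an adjoint equivalence.

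The main obstacle is $\infty$-categorical bookkeeping rather than anything conceptual: producing $\mathrm{Bar}_\bullet(F,T,-)$ as a coherent functor valued in simplicial objects of $\cD$, and verifying the triangle identities for $L\dashv\mnd{G}$ up to coherent homotopy, cannot be done by the naive ``check on faces'' argument available in $1$-categories. This is precisely the step where the degreewise formulas above must be upgraded to statements about the relevant bar constructions as coherent functors out of $\Delta^\op$ (together with the splitting data over $\Delta_+^{\op}$), and it is the reason the self-contained treatment in \cite[Section 4.7]{HA} is substantial. The conservativity hypothesis, by contrast, enters only at the two points flagged above, to pass from ``equivalence after $G$ (resp.\ after $U$)'' to ``equivalence''.
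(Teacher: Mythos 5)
Your proposal is correct and follows essentially the same route as the proof this paper relies on: the paper gives no argument of its own, citing \cite[Theorem 4.7.5.3]{HA}, whose proof is exactly this strategy (conservativity plus creation of split simplicial colimits by the forgetful functor for the ``only if'' direction, and realization of $\mathrm{Bar}_\bullet(F,T,-)$ against $G$-split colimits, with conservativity upgrading equivalences-after-$G$ to equivalences, for the ``if'' direction). You also correctly isolate the one step you defer---constructing $\mathrm{Bar}_\bullet(F,T,-)$ as a coherent functor and establishing the adjunction $L\dashv \mnd{G}$---which is precisely the technical content carried out in \cite[Section 4.7]{HA}.
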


\begin{rem}\label{Functorial_Monadicity}
    The monadicity theorem can be viewed as describing the \textit{essential image} of the functor
    \[
        \Mod_{(-)}(\cC)\colon
        \alg(\End(\cC))^\op \to \Catr_{/\cC}.
    \]
    However, this functor is actually \textit{fully faithful} and the operation taking a right adjoint $\cD\to \cC$ to the associated monad $T$ on $\cC$ is its left adjoint reflection (see \cite{haugseng2020lax} and \cite{heine2017equivalence}).
\end{rem}

The monadicity theorem implies the following cancellation property of monadic functors.

\begin{cor}\label{Monadic_Cancel}
    Let $\cD \oto{G} \cC \oto{K} \cC'$ such that $G$ is a right adjoint and $K$ is conservative. If the composition $KG$ is monadic, then $G$ is monadic.
\end{cor}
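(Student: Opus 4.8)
The plan is to verify the three criteria of the monadicity theorem (\Cref{Monadicity}) for $G$: that $G$ is a right adjoint, that $G$ is conservative, and that $\cD$ admits, and $G$ preserves, $G$-split simplicial colimits. The first is given by hypothesis.

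For conservativity, I would use the cancellation property directly: if $G(f)$ is an isomorphism, then $(KG)(f) = K(G(f))$ is an isomorphism, and since $KG$ is monadic and hence conservative, $f$ is an isomorphism. (Notably, this step does not even use that $K$ is conservative.) The key observation for the two colimit conditions is that \emph{every $G$-split simplicial object in $\cD$ is automatically $KG$-split}. Indeed, if $X_\bullet$ is $G$-split, then $G(X_\bullet)$ extends to a split simplicial object in $\cC$, i.e.\ to an augmented simplicial object admitting an extra degeneracy; applying $K$ preserves the extra degeneracy, so $KG(X_\bullet)$ extends to a split simplicial object in $\cC'$, witnessing that $X_\bullet$ is $KG$-split. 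Since $KG$ is monadic, $\cD$ admits all $KG$-split simplicial colimits, and hence in particular all $G$-split ones.

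It then remains to show $G$ preserves a $G$-split simplicial colimit $X = \colim_\Delta X_\bullet$. Because $X_\bullet$ is $G$-split, the simplicial object $G(X_\bullet)$ carries a split augmentation $G(X_\bullet) \to Y$, and split augmented simplicial objects are colimit diagrams preserved by any functor; thus $Y \iso \colim_\Delta G(X_\bullet)$ in $\cC$, and applying $K$ also gives $K(Y) \iso \colim_\Delta KG(X_\bullet)$ in $\cC'$. On the other hand, the augmentation $G(X_\bullet) \to G(X)$ induced by $X = \colim_\Delta X_\bullet$ becomes, after applying $K$, the augmentation $KG(X_\bullet) \to KG(X)$, which is a colimit diagram since $KG$ is monadic and $X_\bullet$ is $KG$-split. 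Comparing the two colimit diagrams over the common underlying simplicial object $KG(X_\bullet)$ shows that the canonical comparison map $K(Y) \to KG(X)$ is an equivalence. Since $K$ is conservative, the map $Y \to G(X)$ is itself an equivalence, so $G(X_\bullet) \to G(X)$ is a colimit diagram and $G$ preserves this colimit. By \Cref{Monadicity}, $G$ is monadic.

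I expect the main obstacle to be precisely this last step: organizing the $\infty$-categorical coherences so that the two augmented simplicial objects sharing the underlying diagram $KG(X_\bullet)$ genuinely produce the canonical comparison as an equivalence, and checking that this comparison agrees with $K$ applied to the map $Y \to G(X)$ so that conservativity of $K$ can be invoked to reflect it. This is the one place where the hypothesis that $K$ is conservative enters, and where the bookkeeping of augmentations and extra degeneracies must be handled carefully; by contrast, the conservativity and existence-of-colimits steps are comparatively formal consequences of the monadicity of $KG$.
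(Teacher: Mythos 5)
Your proof is correct and takes essentially the same approach as the paper's: cancellation of conservativity, the observation that every $G$-split simplicial object is $KG$-split, and the use of the fact that $K$, like any functor, preserves split simplicial colimits and, being conservative, reflects the relevant colimit comparison. The final step you flag as the main obstacle is exactly what the paper compresses into the sentence that $K$ ``preserves split simplicial colimits, and since it is conservative, it also reflects them,'' so no additional idea is needed beyond what you wrote.
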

\begin{proof}
    First, it is clear that if $KG$ is conservative, then $G$ is conservative. Now, every $G$-split simplicial object in $\cD$ is also $KG$-split and hence its colimit exists in $\cD$ and is preserved by $KG$. As any functor, $K$ preserves split simplicial colimits, and since it is conservative, it also reflects them. Thus, $G$ preserves $G$-split simplicial colimits. The result now follows from the monadicity theorem (\Cref{Monadicity}).
\end{proof}

The monadicity theorem is also a useful tool for establishing monadicity in many naturally occurring situations.

\begin{example}\label{Ex_Sp_cn}
    The functor $\Omega^\infty\colon \Sp^\cn \to \Spc$ is conservative and preserves sifted colimits. Hence, by the monadicity theorem, it is monadic.
    Consequently, we have an equivalence of $\infty$-categories 
    \[
        \Sp^\cn \simeq \Mod_{\Omega^\infty\Sigma_+^\infty}(\Spc).
    \]
    The monad $\Omega^\infty\Sigma_+^\infty \in \alg(\End(\Spc))$ is quite complicated however. 
\end{example}

\begin{rem}
    Unlike a general monadic functor, 
    the functor from \Cref{Ex_Sp_cn} preserves \textit{all} sifted colimits. In \cite[Theorem B-7]{gepner2016universality}, it is shown that a monadic functor $\cD \to \Spc$ preserves sifted colimits, if and only if it exhibit $\cD$ as the $\infty$-category of models for a (finitary) \textit{algebraic theory} in the sense of Lawvere. Roughly speaking, this means that objects of $\cD$ are spaces endowed with an algebraic structure, that can be encoded by a collection of \textit{finitary} operations satisfying certain identities (see \cite[Appendix B]{gepner2016universality} for further discussion and examples). In a different direction, one can also characterize the monads on $\Spc$ which come from \textit{$\infty$-operads} as the \textit{analytic} ones (see \cite{gepner2017infty}).
\end{rem}

\subsubsection{coCompleteness}

One can clarify the role played by each of the assumptions in the monadicity theorem by dividing its proof into three steps:
\begin{enumerate}
    \item Assuming only that $\cD$ admits $G$-split simplicial colimits, one shows that $\mnd{G}\colon \cD \to \Mod_T(\cC)$ admits a left adjoint $\mdef{\mnd{F}}\colon \Mod_T(\cC)\to \cD$.
    
    \item Assuming, in addition, that $G$ preserves $G$-split simplicial colimits, one shows that the unit natural transformation $\Id \to \mnd{G}\mnd{F}$ is an isomorphism and hence, that $\mnd{F}$ is fully faithful.
    
    \item Assuming further that $G$ is also conservative, one deduces that $\mnd{G}$ is conservative as well, and hence an equivalence, as it admits a fully faithful left adjoint $\mnd{F}$.
\end{enumerate}

In particular, one can proceed differently after step (1), by asking instead whether the \textit{counit} natural transformation $\mnd{F}\mnd{G}\to \Id$ is an isomorphism. To address this question, consider the associated \textit{comonad}
\[
    \mdef{\comnd}:= FG \qin \coalg(\End(\cD)).
\] 

\begin{defn}\label{Def_Comnd_Cocmpl}
    For a comonad $M$ on an $\infty$-category $\cD$, 
    the \tdef{comonadic (or $M$-)resolution} of an object $Y\in\cD$ is the augmented simplicial object induced by the comonad structure of $\comnd$,
    \[
        \begin{tikzcd}
    	{{}} & {\comnd^3Y} & {\comnd^2Y} & \comnd Y & Y.
    	\arrow[dotted, no head, from=1-1, to=1-2]
    	\arrow[from=1-4, to=1-5]
    	\arrow[shift left=1, from=1-3, to=1-4]
    	\arrow[shift right=1, from=1-3, to=1-4]
    	\arrow[shorten <=4pt, shorten >=4pt, from=1-4, to=1-3]
    	\arrow[shift left=2, from=1-2, to=1-3]
    	\arrow[from=1-2, to=1-3]
    	\arrow[shift left=1, shorten <=4pt, shorten >=4pt, from=1-3, to=1-2]
    	\arrow[shift right=1, shorten <=4pt, shorten >=4pt, from=1-3, to=1-2]
    	\arrow[shift right=2, from=1-2, to=1-3]
    \end{tikzcd}
    \]
    The object $Y$ is called \tdef{($\comnd$-)cocomplete}, if its augmented $\comnd$-resolution is a colimit cone. Namely, if it is the colimit of its $\comnd$-resolution in a canonical way.    
\end{defn}

We observe that after applying $G$, the unit map $\Id \to GF$ provides an extra degeneracy
\[
\begin{tikzcd}
	{{}} & {G\comnd^3Y} & {G\comnd^2Y} & G\comnd Y  & GY.
	\arrow[dotted, no head, from=1-1, to=1-2]
	\arrow[from=1-4, to=1-5]
	\arrow[shift left=1, from=1-3, to=1-4]
	\arrow[shift right=1, from=1-3, to=1-4]
	\arrow[shorten <=4pt, shorten >=4pt, from=1-4, to=1-3]
	\arrow[shift left=2, from=1-2, to=1-3]
	\arrow[from=1-2, to=1-3]
	\arrow[shift left=1, shorten <=4pt, shorten >=4pt, from=1-3, to=1-2]
	\arrow[shift right=1, shorten <=4pt, shorten >=4pt, from=1-3, to=1-2]
	\arrow[shift right=2, from=1-2, to=1-3]
	\arrow[curve={height=18pt}, dashed, from=1-5, to=1-4]
	\arrow[curve={height=18pt}, dashed, from=1-4, to=1-3]
	\arrow[curve={height=18pt}, dashed, from=1-3, to=1-2]
\end{tikzcd}
\]
Hence, the $M$-resolution is always a $G$-split simplicial diagram. In particular, when $G$ is monadic, every object of $\cD$ is $M$-cocomplete by the monadicity theorem. 

\begin{example}
    Let $U\colon\mathrm{Grp} \to \Set$ be the forgetful functor from the (ordinary) category of groups to that of sets. The comonadic resolution of a group $H \in \mathrm{Grp}$ is then the canonical resolution of $H$ by free groups starting with the group freely generated by the underlying set $UH$. Thus, the fact that every group $H$ is the colimit of its canonical free resolution (i.e., it is \textit{cocomplete}) follows from the classical fact that $U$ is \textit{monadic}. 
\end{example}

\begin{rem}\label{Monadic_Colimit}
    A monadic functor $G\colon\cD\to\cC$ is always a conservative right adjoint. For \textit{presentable} $\infty$-categories, this implies that the essential image of the left adjoint $F$ of $G$ generates $\cD$ under colimits (\Cref{Conserv_Generates}). However, the monadicity of $G$ implies that every object of $\cD$ is the colimit of its comonadic resolution. In particular, we get the stronger claim that $\cl{F(\cC)} = \cD$ (in the sense of \Cref{Def_Cl}). This is a rather special property of monadic functors among all conservative right adjoints.
\end{rem}

In general, the colimit of the $M$-resolution can be described in terms of the adjunction $\mnd{F}\dashv \mnd{G}$.
\begin{prop}\label{Super_Sulyma}
    Let $G\colon \cD \to \cC$ be a functor with a left adjoint $F$ and comonad $\comnd=FG$, and assume that $\cD$ admits $G$-split simplicial colimits. For every $Y\in\cD$, the colimit of the $M$-resolution of $Y$, with the induced map to $Y$, is isomorphic to the counit map $\mnd{F}\mnd{G} Y \to Y$. 
\end{prop}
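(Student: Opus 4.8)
\subsubsection*{Proof proposal}

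The plan is to realize the counit $\varepsilon\colon \mnd{F}\mnd{G}Y \to Y$ as the colimit of the $\comnd$-resolution by transporting the \emph{canonical free resolution} of the $T$-algebra $\mnd{G}Y$ across the adjunction $\mnd{F}\dashv\mnd{G}$. Write $A:=\mnd{G}Y\in\Mod_T(\cC)$, whose underlying object is $UA=GY$, and let $\mathrm{Free}_T\dashv U$ be the free--forgetful adjunction for $\Mod_T(\cC)$. The first ingredient is the identification $\mnd{F}\circ\mathrm{Free}_T\simeq F$: both are left adjoints, and the composite adjunction $\mnd{F}\circ\mathrm{Free}_T\dashv U\circ\mnd{G}=G$ has the same right adjoint $G$ as $F$ does, so the two left adjoints agree. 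In particular $\mnd{F}$ carries free $T$-algebras to free objects of $\cD$.

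The second ingredient is that $A$ is the colimit of its canonical resolution. Since the forgetful functor $U\colon\Mod_T(\cC)\to\cC$ is monadic (by construction of $\Mod_T(\cC)$), every $A\in\Mod_T(\cC)$ is cocomplete with respect to the comonad $\mathrm{Free}_T U$, as noted just before the statement; that is, $A\simeq\colim_{[n]\in\Delta^\op}(\mathrm{Free}_T U)^{n+1}A$, with cone given by the counit of $\mathrm{Free}_T\dashv U$, and with $n$-th term the free algebra $\mathrm{Free}_T(T^nGY)$. Applying the \emph{left adjoint} $\mnd{F}$ — which exists by step (1) of the proof of the monadicity theorem and preserves all colimits — gives
\[
    \mnd{F}\mnd{G}Y=\mnd{F}(A)\simeq\colim_{[n]\in\Delta^\op}\mnd{F}\,\mathrm{Free}_T(T^nGY)\simeq\colim_{[n]\in\Delta^\op}F(T^nGY).
\]
Now $F(T^nGY)=F(GF)^nGY=(FG)^{n+1}Y=\comnd^{n+1}Y$, and the face and degeneracy maps match, since the algebra structure $T(GY)\to GY$ of $A$ is $G$ applied to the counit, which is exactly the datum encoding the comonad $\comnd$. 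Thus the underlying simplicial object is that of the $\comnd$-resolution of $Y$, and $\mnd{F}\mnd{G}Y$ is its colimit.

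It remains to match the augmentations, i.e.\ to see that the induced map from this colimit to $Y$ is the counit $\varepsilon$. The canonical resolution of $A$ is augmented over $A$ by the counit of $\mathrm{Free}_T\dashv U$; applying $\mnd{F}$ and then $\varepsilon$ yields an augmentation of $\comnd^{\bullet+1}Y$ over $Y$. Compatibility follows from the counit formula for a composite adjunction: for $F=\mnd{F}\circ\mathrm{Free}_T\dashv U\circ\mnd{G}=G$ one has $\varepsilon^{F}_Y=\varepsilon^{\mnd{F}}_Y\circ\mnd{F}\big(\varepsilon^{\mathrm{Free}_T}_{\mnd{G}Y}\big)$, and $\varepsilon^{F}_Y\colon\comnd Y=FGY\to Y$ is precisely the level-zero augmentation of the $\comnd$-resolution. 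Since an augmentation is determined by its level-zero map together with the simplicial identities, this identifies the two augmented simplicial objects over $Y$, hence the two maps out of the colimit.

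The main obstacle is that everything above is a \emph{levelwise} comparison, whereas in the $\infty$-categorical setting one must produce \emph{coherent} equivalences of functors out of $\Delta^\op$ (and $\Delta_+^\op$ for the augmentations); one cannot simply check matching degreewise. The cleanest way to supply these coherences is to work with the functorial (co)monadic resolutions directly: the assignments $Y\mapsto(\text{augmented }\comnd\text{-resolution of }Y)$ and $A\mapsto(\text{augmented canonical resolution of }A)$ are functors, and the natural equivalence $\mnd{G}\circ F\simeq\mathrm{Free}_T$ of functors $\cC\to\Mod_T(\cC)$ promotes to an equivalence between $\mnd{G}$ applied to the augmented $\comnd$-resolution of $Y$ and the augmented canonical resolution of $\mnd{G}Y$; dually, $\mnd{F}\circ\mathrm{Free}_T\simeq F$ supplies the coherent identification on the $\cD$ side. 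Verifying that these equivalences assemble coherently — rather than any single computation — is where the real work lies.
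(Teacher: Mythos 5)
Your proposal takes essentially the same route as the paper's proof: both write $\mnd{G}Y$ as the colimit of its bar resolution over the tautologically monadic forgetful functor $U\colon \Mod_T(\cC)\to\cC$, push that colimit through the colimit-preserving left adjoint $\mnd{F}$ using the identification of $\mnd{F}$ composed with the free functor with $F$ (in the paper, the isomorphism of comonads $\comnd \simeq \mnd{F}\comnd_T\mnd{G}$), and match the augmentations by factoring the counit $FG \to \Id$ through $\mnd{F}\mnd{G}\to\Id$. The coherence issue you flag at the end is handled in the paper by constructing the comparison as a single map of simplicial objects, induced by whiskering the unit $\Id \to \mnd{G}\mnd{F}$, and then checking that this map is a levelwise isomorphism.
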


\begin{proof}
    Let $T = GF$ be the monad on $\cC$ and let 
    \[
        U\colon\Mod_T(\cC) \adj \cC\colon \mdef{F_T}
    \]
    be the associated free-forgetful adjunction. Consider the associated comonad $\mdef{\comnd_T} := F_TU$ on $\Mod_T(\cC)$.
    We observe that
    \[
        \comnd = FG \simeq 
        (\mnd{F}F_T)(U\mnd{G}) = 
        \mnd{F}\comnd_T\mnd{G}.
    \]
    Moreover, the unit map $\Id \to \mnd{G}\mnd{F}$ induces for all $n\ge1$, maps of the form
    \[
        \mnd{F}\comnd_T^n\mnd{G} \to 
        (\mnd{F}\comnd_T\mnd{G})^n \simeq
        \comnd^n,
    \]
    which assemble into a map of simplicial objects. In fact, these maps are \textit{isomorphisms}. Indeed, the monad induced by $U\colon \Mod_T(\cC)\to \cC$ is canonically isomorphic to $T=GF$. Namely, when we whisker the unit map $\Id \to \mnd{G}\mnd{F}$ by $F_T$ from the right and $U$ from the left we get an isomorphism of monads (see \Cref{Functorial_Monadicity})
    \[
        UF_T \iso U \mnd{G}\mnd{F}F_T = GF.
    \]
    It follows that the $\comnd$-resolution of an object $Y\in\cD$ is isomorphic to $\mnd{F}$ of the $\comnd_T$-resolution of $\mnd{G}Y$. Since the forgetful functor $U$ is (tautologically) monadic, $\mnd{G}Y$ is the colimit of its $\comnd_T$-resolution
    via the augmentation map. Now, since $\mnd{F}$ is colimit preserving, we get by the above that $\mnd{F}\mnd{G}Y$ is the colimit of the $\comnd$-resolution of $Y$. Finally, the counit map $FG Y \to Y$ factors as a composition of counit maps
    \[
        FG Y \simeq \mnd{F}(F_TU)\mnd{G} Y \to \mnd{F}\mnd{G} Y \to Y,
    \]
    which implies the last part of the claim.
\end{proof}

\begin{cor}\label{Sulyma}
    Let $G\colon \cD \to \cC$ be a functor with a left adjoint $F$ and comonad $\comnd=FG$, and assume that $\cD$ admits $G$-split simplicial colimits. An object $Y\in\cD$ is $\comnd$-cocomplete, if and only if the counit map $\mnd{F}\mnd{G}(Y) \to Y$ is an isomorphism.
\end{cor}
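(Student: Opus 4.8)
The plan is to read this off directly from \Cref{Super_Sulyma}; the corollary carries essentially no content beyond it. First I would unwind \Cref{Def_Comnd_Cocmpl}: by definition, $Y$ is $\comnd$-cocomplete precisely when its augmented $\comnd$-resolution is a colimit cone, i.e.\ when the canonical augmentation map
\[
    \colim\bigl(\dots \to \comnd^2 Y \to \comnd Y\bigr) \longrightarrow Y,
\]
from the colimit of the (unaugmented) simplicial diagram to $Y$, is an isomorphism in $\cD$.

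Next I would invoke \Cref{Super_Sulyma}, whose hypotheses — a left adjoint $F$ and the existence of $G$-split simplicial colimits in $\cD$ — are exactly those assumed here. It identifies this colimit \emph{together with its map to $Y$}, that is, as an object of the slice $\cD_{/Y}$, with the counit map $\mnd{F}\mnd{G}Y \to Y$. The one point worth stating carefully is that the comparison is an equivalence of \emph{arrows} over $Y$, not merely of their sources; this is precisely the ``with the induced map to $Y$'' clause in \Cref{Super_Sulyma}, and it is what lets us transport the property of being an isomorphism.

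Finally I would conclude: since being an isomorphism in $\cD$ is invariant under the equivalence of objects of $\cD_{/Y}$ supplied by \Cref{Super_Sulyma}, the augmentation map is an isomorphism if and only if the counit map $\mnd{F}\mnd{G}(Y)\to Y$ is. By the first step, the former condition is exactly the assertion that $Y$ is $\comnd$-cocomplete, which yields the claimed equivalence.

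There is no real obstacle: all the work has already been done in \Cref{Super_Sulyma}, and the corollary is merely its reformulation in the language of \Cref{Def_Comnd_Cocmpl}. The only thing to remain vigilant about is not to conflate ``the colimit object is equivalent to $\mnd{F}\mnd{G}Y$'' with ``the augmentation is an isomorphism'': the former holds automatically by \Cref{Super_Sulyma} irrespective of cocompleteness, whereas the statement we want concerns the \emph{augmentation map} being invertible. Keeping track of the slice $\cD_{/Y}$ throughout is exactly what separates these two, so I would be careful to phrase the comparison at the level of arrows over $Y$.
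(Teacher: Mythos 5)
Your proposal is correct and matches the paper's reasoning exactly: the paper states \Cref{Sulyma} as an immediate consequence of \Cref{Super_Sulyma} (with no separate proof), and the content is precisely the observation you make, that the identification in \Cref{Super_Sulyma} takes place in $\cD_{/Y}$, so invertibility of the augmentation transfers to invertibility of the counit $\mnd{F}\mnd{G}(Y)\to Y$ and conversely. Your care in distinguishing an equivalence of arrows over $Y$ from a mere equivalence of their sources is exactly the right point to emphasize.
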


\begin{rem}
    \Cref{Sulyma} implies that if $Y\in\cD$ is $\comnd$-cocomplete, then $\mnd G$ is fully faithful on maps out of $Y$. In
    {\cite[Theorem 3.14]{sulyma2017categorical}}, it is proved that this holds even without any cocompleteness assumptions on $\cD$.
    We note that the results of \cite{sulyma2017categorical} are not phrased in the framework of \cite{HA}, but rather in the \textit{$\infty$-cosmological} framework of \cite{riehl2016Adjunctions}, but luckily, these two frameworks are now known to be equivalent thanks to \cite{haugseng2020lax}.
\end{rem}

As a special case of \Cref{Sulyma}, the counit map $\mnd{F}\mnd{G} \to \Id$ is an isomorphism on all objects in $F(\cC)$.
\begin{prop}\label{Free2Cmpl}
    Let $G\colon \cD \to \cC$ be a functor with a left adjoint $F$, and assume that $\cD$ admits $G$-split simplicial colimits. The counit
    $\mnd{F}\mnd{G} \to \Id$
    is an isomorphism on every object in $F(\cC)$. 
\end{prop}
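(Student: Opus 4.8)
The plan is to deduce this from \Cref{Sulyma} by showing that every free object $F(X)$, for $X\in\cC$, is $\comnd$-cocomplete. Indeed, by \Cref{Sulyma} the counit $\mnd{F}\mnd{G}(Y)\to Y$ is an isomorphism precisely when $Y$ is $\comnd$-cocomplete, so it suffices to prove that the augmented $\comnd$-resolution of $F(X)$ is a colimit cone.

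Recall from the discussion following \Cref{Def_Comnd_Cocmpl} that for a \emph{general} object $Y$, the augmented $\comnd$-resolution is only guaranteed to be $G$-split, i.e.\ split after applying $G$, with the extra degeneracy supplied by the unit $\eta\colon \Id\to GF$ living in $\cC$. The key observation is that for a \emph{free} object $Y=F(X)$ this extra degeneracy lifts back to $\cD$: using the functoriality of $\comnd=FG$, the maps $\comnd^{\,n+1}(F\eta_X)\colon \comnd^{\,n+1}(FX)\to \comnd^{\,n+2}(FX)$, together with $F\eta_X\colon FX\to\comnd(FX)$ at the augmentation, assemble into an extra degeneracy for the augmented $\comnd$-resolution of $FX$ \emph{in $\cD$ itself}. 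The one nontrivial relation to check at the bottom is $\epsilon_{FX}\circ F\eta_X=\id_{FX}$, which is exactly a triangle identity for the adjunction $F\dashv G$.

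Consequently, the augmented $\comnd$-resolution of $F(X)$ is a \emph{split} augmented simplicial object in $\cD$. As recalled before \Cref{Monadicity}, a split augmented simplicial object always exhibits its augmentation as the colimit of the underlying simplicial diagram; hence $F(X)$ is $\comnd$-cocomplete, and \Cref{Sulyma} gives the result.

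The main obstacle is upgrading this extra degeneracy from a compatible family of maps to a fully coherent datum, i.e.\ genuinely exhibiting the augmented $\comnd$-resolution of $FX$ as split in the sense of \cite[Definition 4.7.2.2]{HA}. This is the standard (if delicate) coherence behind the slogan that \emph{free objects have split comonadic resolutions}, and is where the $\infty$-categorical argument departs from the $1$-categorical bookkeeping of degeneracies. I stress that one cannot sidestep it by merely transporting the split $\comnd_T$-resolution of the free $T$-algebra $\mnd{G}(FX)\simeq F_T(X)$ along the colimit-preserving functor $\mnd{F}$ (as in the proof of \Cref{Super_Sulyma}): that route only re-identifies the colimit of the $\comnd$-resolution of $FX$ with $\mnd{F}\mnd{G}(FX)$, and recognizing that its augmentation to $FX$ is an isomorphism is again precisely the content of \Cref{Sulyma}. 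Thus the genuinely new input is the lift of the splitting to $\cD$ via $F\eta_X$, whose higher coherence is furnished by the functoriality of the bar construction.
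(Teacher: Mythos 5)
Your proof is correct and takes essentially the same route as the paper: the paper likewise observes that the unit $X \to GF(X)$ induces an extra degeneracy exhibiting the $\comnd$-resolution of $FX$ as split in $\cD$ itself, concludes that $FX$ is $\comnd$-cocomplete, and then applies \Cref{Sulyma}. Even your closing coherence caveat mirrors the paper, which flags in a footnote that this splitting is ``similar yet different'' from the $G$-splitting of the resolution of a general object.
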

\begin{proof}
    Let $\comnd=FG$ be the associated comonad on $\cD$. 
    For every $X\in\cC$, the unit map $X \to GF(X)$ induces an extra degeneracy for the augmented simplicial $\comnd$-resolution of $FX$
    \[
    \begin{tikzcd}
    	{{}} & {\comnd^3FX} & {\comnd^2FX} & \comnd FX & FX,
    	\arrow[dotted, no head, from=1-1, to=1-2]
    	\arrow[from=1-4, to=1-5]
    	\arrow[shift left=1, from=1-3, to=1-4]
    	\arrow[shift right=1, from=1-3, to=1-4]
    	\arrow[shorten <=4pt, shorten >=4pt, from=1-4, to=1-3]
    	\arrow[shift left=2, from=1-2, to=1-3]
    	\arrow[from=1-2, to=1-3]
    	\arrow[shift left=1, shorten <=4pt, shorten >=4pt, from=1-3, to=1-2]
    	\arrow[shift right=1, shorten <=4pt, shorten >=4pt, from=1-3, to=1-2]
    	\arrow[shift right=2, from=1-2, to=1-3]
    	\arrow[curve={height=18pt}, dashed, from=1-5, to=1-4]
    	\arrow[curve={height=18pt}, dashed, from=1-4, to=1-3]
    	\arrow[curve={height=18pt}, dashed, from=1-3, to=1-2]
    \end{tikzcd}
    \]
    implying that $FX$ is $\comnd$-cocomplete\footnote{This argument is similar yet different from the one used to show that the $M$-resolution of every object is $G$-split.}. By \Cref{Sulyma}, the counit map $\mnd{F}\mnd{G}(FX) \to FX$ is an isomorphism. 
\end{proof}

Somewhat in the opposite direction, we have the following ``lower bound'' on $F(\cC)$, in terms of objects for which the counit of the \textit{original} adjunction $F G \to \Id$ is an isomorphism. 

\begin{prop}\label{Cmpl2Free}
    Let $G\colon \cD \to \cC$ be a functor with a left adjoint $F$ and assume that $\cC$ and $\cD$ admit all small colimits. Denote by $\cD_0\sseq \cD$ be the full subcategory spanned by objects for which the counit map $F G \to \Id$ is an isomorphism. We have 
    $\cl{\cD}_0 \sseq F(\cC)$. 
\end{prop}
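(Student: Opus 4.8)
The plan is to exhibit, for every object of $\cl{\cD}_0$, an explicit preimage under $F$, thereby placing it in $F(\cC)$. By \Cref{Def_Cl}, any such object has the form $Y \simeq \colim_K p$ for a small diagram $p\colon K \to \cD$ factoring through $\cD_0$. The natural candidate for a preimage is $X := \colim_K (G\circ p) \in \cC$, which exists because $\cC$ is cocomplete, and I claim that $F(X)\simeq Y$.

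First I would record that whiskering the counit $\epsilon\colon FG\to\Id_\cD$ with the diagram $p$ produces a natural transformation $\epsilon * p\colon FGp \to p$ in $\Fun(K,\cD)$ whose component at each vertex $k\in K$ is $\epsilon_{p(k)}$. Since $p$ factors through $\cD_0$, every such component is an isomorphism by the very definition of $\cD_0$; and as a natural transformation in a functor $\infty$-category is an equivalence exactly when it is a pointwise equivalence, $\epsilon * p$ is an equivalence $FGp \simeq p$.

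The second step is to pass to colimits on both sides. Applying the colimit functor $\colim_K\colon \Fun(K,\cD)\to\cD$ to the equivalence $FGp\simeq p$ gives $\colim_K (FGp)\simeq \colim_K p \simeq Y$. On the other hand, $FGp \simeq F\circ(G\circ p)$, and since $F$ is a left adjoint it preserves small colimits; hence $\colim_K\bigl(F\circ(G\circ p)\bigr) \simeq F\!\left(\colim_K (G\circ p)\right) = F(X)$. Chaining these equivalences yields $Y \simeq F(X)$, so that $Y\in F(\cC)$, which is exactly the desired inclusion.

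I expect no serious obstacle here: the argument is essentially formal, and the hypotheses enter only through the cocompleteness of $\cC$ (needed to form $X$) and the colimit-preservation of $F$. The only points demanding the usual $\infty$-categorical care are that the whiskered counit is an equivalence in the relevant functor $\infty$-category (via the pointwise criterion) and that $F$ genuinely commutes with the colimit. It is worth noting that, unlike \Cref{Free2Cmpl}, this direction invokes neither $G$-split colimits nor any monadicity input, which is why the cocompleteness assumption on both $\cC$ and $\cD$ suffices.
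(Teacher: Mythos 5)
Your proof is correct and is essentially the paper's own argument: both write $Y$ as a colimit of a diagram in $\cD_0$, use that the counit is a pointwise isomorphism on that diagram, and conclude via $F(\colim G(Y_k)) \simeq \colim FG(Y_k) \simeq \colim Y_k \simeq Y$. The paper just states this chain of equivalences directly, while you spell out the whiskering and pointwise-equivalence justification that it leaves implicit.
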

\begin{proof}
    Given an object $Y=\colim Y_k \in \cD$, with $Y_k \in \cD_0$ for all $k$, we have 
    \[
        F(\colim G (Y_k)) \simeq 
        \colim(F G (Y_k)) \simeq
        \colim Y_k \simeq Y.
    \]
    Hence, $Y \in F(\cC)$. 
\end{proof}

\subsubsection{Comonadicity and completeness}

By passing to opposite $\infty$-categories, all the discussion above can be dualized to give analogous constructions and statements for \textit{left} adjoints, \textit{comonads} and their \textit{coalgebras}. The structure of a coalgebra over a comonad can be used to encode ``descent data'' in various (algebro-)geometric situations and the dual of the monadicity theorem can be used to established various descent results (see \cite[Section 4.7]{borceux1994handbook} for a basic introduction and \cite[Appendix D]{SAG} for a comprehensive theory).  
The notion of \textit{completeness}, dual to that of \Cref{Def_Comnd_Cocmpl}, is more familiar in this setting.
\begin{example}[Nilpotent Completion]\label{Nil_Completion}
     Let $R\in\alg(\Sp)$ be a ring spectrum and consider the associated free-forgetful adjunction
    \[
        R\otimes(-)\colon \Sp \adj \Mod_R(\Sp)\colon U.
    \]
    Every spectrum $X\in\Sp$ has a \textit{cosimplicial} $R$-resolution
    \[
        \begin{tikzcd}
        	{R\otimes X} & {R^{\otimes 2}\otimes X} & { R^{\otimes3}\otimes X} & {{}}
        	\arrow[shift left=1, from=1-1, to=1-2]
        	\arrow[shift right=1, from=1-1, to=1-2]
        	\arrow[shorten <=4pt, shorten >=4pt, from=1-2, to=1-1]
        	\arrow[shift left=2, from=1-2, to=1-3]
        	\arrow[from=1-2, to=1-3]
        	\arrow[shift right=2, from=1-2, to=1-3]
        	\arrow[shift left=1, shorten <=4pt, shorten >=4pt, from=1-3, to=1-2]
        	\arrow[shift right=1, shorten <=4pt, shorten >=4pt, from=1-3, to=1-2]
        	\arrow[dotted, no head, from=1-3, to=1-4]
        \end{tikzcd}
    \]
    whose limit is known as the \textit{$R$-nilpotent completion} of $X$ (see \cite[Proposition 2.14]{mathew2017nilpotence} for comparison with the more classical \cite[Definition 1.3]{ravenel1984localization} in terms of the $R$-based Adams tower). An object $X$ is called \textit{$R$-nilpotent complete} if the canonical augmentation of the above co-simplicial diagram exhibits $X$ as its own $R$-nilpotent completion. The dual of \Cref{Free2Cmpl} in this case recovers the standard observation that the underlying spectrum of every $R$-module is $R$-nilpotent complete. We should also note that an $R$-nilpotent complete spectrum is always \textit{$R$-local} in the sense of Bousfield (\Cref{Ex_Homology_Loc}), but the converse need not hold in general. In particular, the (conservative) functor
    \[
        R\otimes(-)\colon \Sp{}_R \to \Mod_R(\Sp)
    \]
    need not be \textit{comonadic}. 
\end{example}

\begin{example}[Unstable Nilpotent Completion]\label{Unstable_Nil_Completion}
    As in \Cref{Nil_Completion}, using the composition of the left adjoint functors
    \[
        \Spc \oto{\:\Sigma_+^\infty\:} 
        \Sp \oto{R\otimes(-)}
        \Mod_R(\Sp),
    \]
    one can similarly define the $R$-nilpotent completion $\widehat{X}_R$ of a space $X$ (classically denoted $R_\infty X$ by Bousfield and Kan in \cite{bousfield1972homotopy}). Spaces for which the $R$-nilpotent completion coincides with the $R$-localization are called \textit{$R$-good} and  otherwise, \textit{$R$-bad} (see, for example,  \cite{bousfield1992completion} for a thorough investigation of $\FF_p$-good and $\FF_p$-bad spaces). 
\end{example}

\section{The Monadic Tower} 

In this section, we construct the monadic tower of a right adjoint functor (\Cref{Def_Monadic_Tower}) and the associated (transfinite) colocalization sequence (\Cref{Def_Coloc_Sequence}). We then prove our main result regarding the convergence of the monadic tower under the assumption of presentability (\Cref{Monadic_Convergence}). For conservative right adjoints, we deduce a characterization as transfinite compositions of monadic functors (\Cref{Trans_Monad}) and for general right adjoints, we deduce an iterated colimit formula for the coreflection part (\Cref{Iterated_Colimit_Functorial}).
We conclude with a discussion about the relation of our results to the long homology localization tower of \cite{dror1977long}.

\subsubsection{Construction}

Under mild cocompleteness assumptions, we can factor a general right adjoint functor $G\colon \cD \to \cC$ through a (possibly transfinite) composition of monadic functors, by iterating the construction $G \mapsto \mnd{G}$. 
Let $\Ord \in \widehat{\Cat}$ be the (large) poset of small ordinals.

\begin{defn}[Monadic Tower]\label{Def_Monadic_Tower} 
    Let $G\colon\cD \to \cC$ in $\Catr$, such that $\cD$ admits small sifted colimits. The \tdef{monadic tower} of $G$ is a diagram,
    \[
        \mdef{\up{\cC}{\bullet}}\colon 
        \Ord^\op \longrightarrow \Catr_{\cD/},
    \]
    which is defined as follows:
    \begin{enumerate}
        \item For the \textit{initial} ordinal $\alpha = 0$, we set $\mdef{\cC_0}:=\cC$ and $\mdef{G_0}:=G$. 
        
        \item For a \textit{successive} ordinal $\alpha + 1$, we first let $\up{F}{\alpha}$ be the left adjoint of $\up{G}{\alpha}$ and define the associated monad
        \[
            \up{T}{\alpha} := 
            \up{G}{\alpha}\up{F}{\alpha} \qin 
            \alg(\End(\up{\cC}{\alpha})).
        \]
        We then extend the tower by
        \[
            \xymatrix@C=10ex{ & \qquad\Mod_{\up{T}{\alpha}}(\up{\cC}{\alpha}) =: \mdef{\cC_{\alpha+1}}\ar[d]^{\up{U}{\alpha}}\\
            \cD\ar[r]_{\up{G}{\alpha}}\ar[ru]^{\mdef{G_{\alpha+1}}:=\mnd{G}_{\alpha}\quad} & \up{\cC}{\alpha}.
            }
        \]
        Since $\cD$ admits simplicial colimits, the functor $G_{\alpha+1}$ is a right adjoint. 
        
        \item For a \textit{limit} ordinal $\alpha$, we define 
        $\mdef{\cC_\alpha} := \invlim_{\beta<\alpha}\up{\cC}{\beta}.$
        Since $\cD$ admits $\alpha$-shaped colimits, by \cite[Theorem B]{horev2017conjugates}, we have an induced right adjoint functor $\mdef{G_\alpha}\colon \cD \to \up{\cC}{\alpha}$ extending the tower constructed thus far. 
    \end{enumerate}
\end{defn}

In the situation of \Cref{Def_Monadic_Tower}, we have for every $\alpha\in\Ord$ also a comonad 
$\mdef{\comnd_\alpha} := \up{F}{\alpha} \up{G}{\alpha}$
on $\cD$ with a counit $\comnd_\alpha \to \Id_\cD$. For successive ordinals, 
the factorization of $\up{G}{\alpha}$ via $\up{G}{\alpha+1}$ induces a map 
$\comnd_\alpha \to \comnd_{\alpha+1}$
compatible with the respective counits. Furthermore, by \Cref{Super_Sulyma}, we have for every $Y\in\cD$ a simplicial colimit diagram (over $Y$)
\[
    \begin{tikzcd}
    	{{}} & {\comnd_{\alpha}^3Y} & {\comnd_{\alpha}^2Y} & \comnd_{\alpha} Y & \mdef{\comnd_{\alpha+1}Y} \\
    	&&&& Y.
    	\arrow[dotted, no head, from=1-1, to=1-2]
    	\arrow[dashed, from=1-4, to=1-5]
    	\arrow[shift left=1, from=1-3, to=1-4]
    	\arrow[shift right=1, from=1-3, to=1-4]
    	\arrow[shorten <=4pt, shorten >=4pt, from=1-4, to=1-3]
    	\arrow[shift left=2, from=1-2, to=1-3]
    	\arrow[from=1-2, to=1-3]
    	\arrow[shift left=1, shorten <=4pt, shorten >=4pt, from=1-3, to=1-2]
    	\arrow[shift right=1, shorten <=4pt, shorten >=4pt, from=1-3, to=1-2]
    	\arrow[shift right=2, from=1-2, to=1-3]
    	\arrow[dashed,from=1-5, to=2-5]
    	\arrow[from=1-4, to=2-5]
    \end{tikzcd}
\]
For a limit ordinal $\alpha$, \cite[Theorem B]{horev2017conjugates} provides a description of the counit map of $\comnd_\alpha$ as the colimit over $\beta<\alpha$ of the counit maps of the $\comnd_\beta$-s. Namely, for every $Y\in\cD$, the following diagram is a colimit diagram in $\cD_{/Y}$:
\[
    \begin{tikzcd}
    	{\comnd_0 Y} & {\comnd_1 Y} & {\dots} & {\comnd_\beta Y} & {\dots } & {\mdef{\comnd_\alpha Y}} \\
    	&&&&& {Y.}
    	\arrow[from=1-1, to=1-2]
    	\arrow[from=1-2, to=1-3]
    	\arrow[from=1-3, to=1-4]
    	\arrow[from=1-4, to=1-5]
    	\arrow[dashed, from=1-5, to=1-6]
    	\arrow[curve={height=12pt}, from=1-1, to=2-6]
    	\arrow[curve={height=6pt}, from=1-2, to=2-6]
    	\arrow[curve={height=6pt}, from=1-4, to=2-6]
    	\arrow[dashed, from=1-6, to=2-6]
    \end{tikzcd}
\]

\begin{defn}[Colocalization Sequence]\label{Def_Coloc_Sequence}
    For every $Y\in\cD$, the \tdef{(long) colocalization sequence} of $Y$ is the $\Ord$-shaped diagram in $\cD_{/Y}$, that extends the above 
    $(\alpha+1)$-shaped diagram over all ordinals $\alpha\in\Ord$.
\end{defn}

\begin{rem}\label{Functorial_Colimit}
    Each term in the colocalization sequence is given by an iterated colimit of objects in the essential image of $F\colon \cC \to \cD$. Moreover, this presentation is \textit{functorial} in the object.  
\end{rem}

\subsubsection{Higher cocompleteness}
To study the convergence of the monadic tower, we first consider the convergence of the colocalization sequence. For this, we introduce the following generalization of the notion of \textit{cocompleteness} from \Cref{Def_Comnd_Cocmpl}.   

\begin{defn}
    In the situation of \Cref{Def_Monadic_Tower},
    we say that $Y\in\cD$ is \tdef{$\alpha$-cocomplete} for $\alpha\in\Ord$, if the $\alpha$-th counit map $\comnd_\alpha Y \to Y$ is an isomorphism and that it is \tdef{$\infty$-cocomplete}, if it is $\alpha$-cocomplete for some $\alpha\in\Ord$.
\end{defn}

\begin{rem}
    By \Cref{Sulyma}, an object in $\cD$ is $1$-cocomplete, if and only if it is $\comnd$-cocomplete in the sense of
    \Cref{Def_Comnd_Cocmpl}.
\end{rem}

For $\infty$-cocomplete objects, the colocalization sequence converges in a very strong sense.  

\begin{prop}\label{Coloc_Seq_Stabilize}
    For an $\alpha$-cocomplete object $Y\in\cD$, the colocalization sequence of $Y$ stabilizes (up to isomorphism) on $Y$ at $\alpha \in \Ord$. 
\end{prop}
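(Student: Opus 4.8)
The plan is to prove, by transfinite induction on $\beta \ge \alpha$, that $Y$ is $\beta$-cocomplete, i.e.\ that the $\beta$-th counit map $\comnd_\beta Y \to Y$ is an isomorphism. Since all maps in the colocalization sequence lie over $Y$, once this is established every transition map $\comnd_\beta Y \to \comnd_{\beta'} Y$ with $\alpha \le \beta \le \beta'$ is forced to be an isomorphism as well (both legs to $Y$ are isomorphisms), which is exactly the asserted stabilization on $Y$ from $\alpha$ onward. The base case $\beta = \alpha$ is precisely the hypothesis that $Y$ is $\alpha$-cocomplete.

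For the successor step, suppose $Y$ is $\beta$-cocomplete, so that the counit $\comnd_\beta Y = \up{F}{\beta}\up{G}{\beta}Y \to Y$ is an isomorphism. In particular $Y$ then lies in the essential image $\up{F}{\beta}(\up{\cC}{\beta})$. I would apply \Cref{Free2Cmpl} to the adjunction $\up{F}{\beta} \dashv \up{G}{\beta}$; its hypotheses hold, since $\cD$ admits small sifted colimits and hence the relevant $\up{G}{\beta}$-split simplicial colimits. The conclusion is that the counit of the comonad $\up{F}{\beta+1}\up{G}{\beta+1} = \comnd_{\beta+1}$ is an isomorphism on every object of $\up{F}{\beta}(\up{\cC}{\beta})$. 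Evaluating at $Y$ shows that $\comnd_{\beta+1} Y \to Y$ is an isomorphism, i.e.\ $Y$ is $(\beta+1)$-cocomplete.

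For a limit ordinal $\gamma > \alpha$ with $Y$ known to be $\beta$-cocomplete for all $\alpha \le \beta < \gamma$, I would use the description (from \cite[Theorem B]{horev2017conjugates}, recalled in the construction of the tower) of the map $\comnd_\gamma Y \to Y$ as the colimit in $\cD_{/Y}$ of the maps $\comnd_\beta Y \to Y$ over $\beta < \gamma$. The half-open interval $[\alpha,\gamma)$ is a non-empty up-set in the chain $[0,\gamma)$, hence cofinal, so it computes the same colimit; along this cofinal part every leg $\comnd_\beta Y \to Y$ is an isomorphism by the inductive hypothesis, and therefore so is every transition map. The colimit of such a diagram of isomorphisms over the weakly contractible (filtered) index $[\alpha,\gamma)$ is $Y$ itself, with comparison map the isomorphism, so $Y$ is $\gamma$-cocomplete. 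The only delicate point in the whole argument is this limit step: one must first discard the initial segment $[0,\alpha)$—where the counits need not be invertible—by the cofinality argument, and only then invoke the elementary fact that a colimit of isomorphisms over a weakly contractible diagram recovers the common value.
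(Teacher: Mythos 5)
Your proof is correct and follows essentially the same route as the paper's: the successor step via \Cref{Free2Cmpl} applied to the adjunction $F_\beta \dashv G_\beta$ (using that $\beta$-cocompleteness puts $Y$ in $F_\beta(\cC_\beta)$), 2-out-of-3 for the transition maps, and the colimit description of $\comnd_\gamma$ at limit ordinals. The paper compresses the transfinite induction into the single phrase ``a transfinite composition of isomorphisms is an isomorphism''; your explicit cofinality argument at limit stages merely spells out the detail that phrase glosses over.
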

\begin{proof}
    If $Y$ is $\alpha$-cocomplete, then in particular $Y\simeq F_\alpha G_\alpha(Y) \in F_\alpha(\cC_\alpha)$. Thus, by \Cref{Free2Cmpl}, the object $Y$ is also $(\alpha+1)$-cocomplete. It follows by 2-out-of-3 that the map $\comnd_\alpha X \to \comnd_{\alpha+1}X$ is an isomorphism. As a transfinite composition of isomorphisms is an isomorphism, we deduce that the colocalization sequence of $Y$ stabilizes on $Y$ from $\alpha$ on.  
\end{proof}

From now on, we shall restrict our attention to \textit{presentable} $\infty$-categories. Every $G\colon \cD \to \cC$ in $\Prr$ is accessible and hence its associated monad on $\cC$ is also accessible. Combining \Cref{ModT_Presentable} with the fact that the inclusion $\Prr \into \Catr$ preserves limits (\cite[Theorem 5.5.3.18]{htt}), we get that the entire monadic tower of $G$ lifts to presentable $\infty$-categories
\[
    \cC_\bullet\colon 
    \Ord^\op \longrightarrow \Prr_{\cD/}.
\]
In this situation, we get that the full subcategory of   $\infty$-cocomplete objects has a familiar description.

\begin{prop}\label{cocomplete_Is_Generated}
    Let $G\colon \cD \to \cC$ in $\Prr$ with a left adjoint $F$. An object of $\cD$ is $\infty$-cocomplete, if and only if it belongs to the subcategory generated under small colimits by $F(\cC)$. In particular, $G$ is conservative, if and only if all objects of $\cD$ are $\infty$-cocomplete.  
\end{prop}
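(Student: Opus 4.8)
The plan is to identify the full subcategory $\mathcal{S}\sseq\cD$ of $\infty$-cocomplete objects with the subcategory $\cD_\infty\sseq\cD$ generated under small colimits by $F(\cC)$, by establishing the two inclusions separately. The ``in particular'' clause will then drop out by comparison with \Cref{Conserv_Generates}.

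The inclusion $\mathcal{S}\sseq\cD_\infty$ is the easy direction and needs nothing beyond the construction of the tower. If $Y\in\cD$ is $\alpha$-cocomplete, then $\comnd_\alpha Y\iso Y$, and by the colocalization sequence (\Cref{Functorial_Colimit}) the object $\comnd_\alpha Y$ is an iterated small colimit of objects of $F(\cC)$; as $\cD_\infty$ is closed under small colimits and contains $F(\cC)$, it contains $\comnd_\alpha Y\iso Y$.

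For the reverse inclusion $\cD_\infty\sseq\mathcal{S}$, which is the substantial part, I would exploit that $\cD_\infty$ is by definition the smallest full subcategory of $\cD$ containing $F(\cC)$ and closed under small colimits; hence it suffices to verify that $\mathcal{S}$ enjoys both properties. That $F(\cC)\sseq\mathcal{S}$ is immediate from \Cref{Free2Cmpl} applied to the stage-zero adjunction $F\dashv G$: every object of $F(\cC)$ is $1$-cocomplete. The real content is closure under small colimits. Given a small diagram with $Y=\colim_i Y_i$ and each $Y_i$ being $\alpha_i$-cocomplete, the smallness of the index lets me set $\alpha:=\sup_i\alpha_i$, again a small ordinal, and \Cref{Coloc_Seq_Stabilize} promotes every $Y_i$ to being $\alpha$-cocomplete. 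Thus $Y$ is a single colimit of objects lying in the full subcategory of $\alpha$-cocomplete objects, and I would feed this into the two one-step results at stage $\alpha$.

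The key manoeuvre — and the step I expect to be the main obstacle, precisely because the comonad $\comnd_\alpha=F_\alpha G_\alpha$ is \emph{not} colimit-preserving (its right-adjoint factor $G_\alpha$ destroys colimits) — is to change stages rather than to push the colimit through $\comnd_\alpha$ directly. Applying \Cref{Cmpl2Free} to the adjunction $F_\alpha\dashv G_\alpha$, which is legitimate since $\cC_\alpha$ and $\cD$ are presentable and hence cocomplete, the $\alpha$-cocomplete objects are exactly those on which the counit of $\comnd_\alpha$ is an isomorphism, so their one-step colimit closure $\cl{(-)}$ lands inside the essential image $F_\alpha(\cC_\alpha)$; in particular $Y\in F_\alpha(\cC_\alpha)$. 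Now \Cref{Free2Cmpl}, applied once more at stage $\alpha$, shows the counit $\comnd_{\alpha+1}\to\Id$ is an isomorphism on all of $F_\alpha(\cC_\alpha)$, so $Y$ is $(\alpha+1)$-cocomplete and lies in $\mathcal{S}$. This gives closure under colimits, hence $\cD_\infty\sseq\mathcal{S}$ and equality. Finally, for the ``in particular'', \Cref{Conserv_Generates} says $G$ is conservative exactly when $F(\cC)$ generates $\cD$ under colimits, i.e.\ when $\cD_\infty=\cD$, which by the identification just proved is exactly when every object of $\cD$ is $\infty$-cocomplete.
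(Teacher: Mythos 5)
Your proof is correct, and for the substantive direction --- that the colimit-closure of $F(\cC)$ consists of $\infty$-cocomplete objects --- it is essentially the paper's own argument: the chain $\cl{\cD}_\alpha \sseq F_\alpha(\cC_\alpha) \sseq \cD_{\alpha+1}$ obtained from \Cref{Cmpl2Free} and \Cref{Free2Cmpl} applied at stage $\alpha$, combined with the observation that a small diagram of $\infty$-cocomplete objects lands, for a single small ordinal $\alpha$ (by \Cref{Coloc_Seq_Stabilize} and smallness of the index), in the $\alpha$-cocomplete objects. The one place you genuinely diverge is the converse inclusion $\mathcal{S}\sseq\cD_\infty$: you dispatch it by citing \Cref{Functorial_Colimit}, whereas the paper does not lean on that remark but instead proves the needed statement --- every object of $F_\alpha(\cC_\alpha)$ lies in the colimit-closure of $F(\cC)$ --- by an explicit transfinite induction: at a successor stage it uses that every object of $\cC_{\alpha+1}=\Mod_{T_\alpha}(\cC_\alpha)$ is a colimit of free $T_\alpha$-algebras (\Cref{Monadic_Colimit}) together with the identification of $F_{\alpha+1}$ on free algebras ($F_{\alpha+1}$ of the free $T_\alpha$-algebra on $X$ is $F_\alpha(X)$), and at a limit stage it uses the formula for $F_\alpha$ from \cite[Theorem B]{horev2017conjugates}. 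Your shortcut is not circular, since \Cref{Functorial_Colimit} can be established by exactly this induction on the construction of the colocalization sequence, independently of the present proposition; but as that remark is stated in the paper without proof, your write-up implicitly defers this nontrivial induction to the reader, while the paper's proof supplies it and is therefore self-contained. The remaining pieces (stage-zero use of \Cref{Free2Cmpl} to get $F(\cC)\sseq\mathcal{S}$, and \Cref{Conserv_Generates} for the ``in particular'' clause) match the paper exactly.
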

\begin{proof}
    For every $\alpha \in \Ord$, 
    let $\cD_{\alpha} \sseq \cD$ be the full subcategory spanned by the $\alpha$-cocomplete objects.
    On the one hand, by \Cref{Free2Cmpl}, we have $F_\alpha(\cC_\alpha) \sseq \up{\cD}{\alpha+1}.$
    On the other hand, by \Cref{Cmpl2Free}, we have 
    $\cl{\cD}_\alpha \sseq F_\alpha(\cC_\alpha)$ (in the sense of \Cref{Def_Cl}). 
    We therefore get,
    \[
        \cl{\cD}_{\alpha} \sseq 
        \up{F}{\alpha}(\up{\cC}{\alpha}) \sseq
        \cD_{\alpha+1}.
    \]
    Every small diagram in 
    \(
        \cD_\infty =
        \bigcup_{\alpha\in\Ord} \cD_{\alpha}
    \)
    factors through $\cD_\alpha$ for some $\alpha \in \Ord$. Since $\cl{\cD}_{\alpha} \sseq \cD_{\alpha+1}$, we get that $\cD_\infty$ is closed under small colimits. It remains to show that every object of $\cD_\infty$ is generated under colimits by $F(\cC)$.
    Since we also have 
    \(
        \cD_\infty = 
         \bigcup_{\alpha\in\Ord} F_\alpha(\cC_\alpha),
    \)
    it suffices to show, by transfinite induction on $\alpha$, that every object of $F_\alpha(\cC_\alpha)$ is generated under colimits by $F(\cC)$.
    For $\alpha=0$, we have $F_0=F$ and hence the claim holds. Let $\alpha+1$ be a successive ordinal. We recall that every object in  $\cC_{\alpha+1}=\Mod_{T_\alpha}(\cC_\alpha)$
    is a colimit of a diagram of free $T_\alpha$-algebras (see \Cref{Monadic_Colimit}). In addition, the functor $F_{\alpha+1}$ preserves colimits and its value on a free $T_\alpha$-algebra on $X\in\cC_\alpha$ is isomorphic to $F_\alpha(X)$. Thus,
    \[
        F_{\alpha+1}(\cC_{\alpha+1}) \sseq \cl{F_\alpha(\cC_\alpha)}.
    \]
    We deduce, by the inductive hypothesis, that every object of $F_{\alpha+1}(\cC_{\alpha+1})$ is generated under small colimits by $F(\cC)$.
    For a limit ordinal $\alpha$, the explicit description of $F_\alpha$, provided by \cite[Theorem B]{horev2017conjugates}, implies immediately that
    \[
        F_\alpha(\cC_\alpha) \sseq 
        \cl{\bigcup_{\beta<\alpha}F_\beta(\cC_\beta)}.
    \]
    By the inductive hypothesis, every object of $F_\beta(\cC_\beta)$ for $\beta<\alpha$ is generated under colimits by $F(\cC)$, hence so is every object of $F_\alpha(\cC_\alpha)$. Finally, the last claim follows from \Cref{Conserv_Generates}.
\end{proof}

Another consequence of presentability is a uniform bound on the level of cocompleteness of $\infty$-cocomplete objects.

\begin{prop}\label{Uniform_Cmpl}
    Let $G\colon \cD \to \cC$ in $\Prr$. There exists $\mdef{\alpha_0} \in \Ord$, such that every $\infty$-cocomplete object $Y\in \cD$ is already $\alpha_0$-cocomplete. 
\end{prop}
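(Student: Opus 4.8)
The plan is to exploit the fact that the $\infty$-cocomplete objects form a \emph{presentable} subcategory, so that every such object is built from an essentially small set of compact objects in a \emph{single} colimit step, which keeps the level of cocompleteness under control.

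First I would invoke \Cref{cocomplete_Is_Generated} to identify the $\infty$-cocomplete objects with the objects of the full subcategory $\cD_\infty \sseq \cD$ generated under colimits by $F(\cC)$, and \Cref{Gen_Presentable} to see that $\cD_\infty$ is presentable. I would then fix a regular cardinal $\kappa$ for which $\cD_\infty$ is $\kappa$-compactly generated and choose a small set $P$ of representatives for its $\kappa$-compact objects. Each $p \in P$ is $\infty$-cocomplete, hence $\alpha_p$-cocomplete for some $\alpha_p \in \Ord$; since $P$ is small, $\alpha_0 := \sup_{p\in P}\alpha_p$ is again a small ordinal, and because \Cref{Coloc_Seq_Stabilize} shows that $\alpha_p$-cocompleteness implies $\beta$-cocompleteness for all $\beta \ge \alpha_p$, every $p \in P$ is $\alpha_0$-cocomplete. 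Thus $P$ is contained in the subcategory $\cD_{\alpha_0}$ of $\alpha_0$-cocomplete objects. Now I would use that in a $\kappa$-compactly generated $\infty$-category every object $Y \in \cD_\infty$ is the colimit of its canonical $\kappa$-filtered diagram of $\kappa$-compact objects, all of which lie in $P \sseq \cD_{\alpha_0}$; by \Cref{Def_Cl} this means $Y \in \cl{\cD}_{\alpha_0}$. Combining \Cref{Cmpl2Free} and \Cref{Free2Cmpl} at level $\alpha_0$ yields the sandwich $\cl{\cD}_{\alpha_0} \sseq F_{\alpha_0}(\cC_{\alpha_0}) \sseq \cD_{\alpha_0 + 1}$, so $Y$ is $(\alpha_0+1)$-cocomplete. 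Hence every $\infty$-cocomplete object is $(\alpha_0+1)$-cocomplete, proving the proposition with the uniform bound $\alpha_0 + 1$.

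The crux — and the place where presentability does the real work — is the observation that one needs only a \emph{single} colimit step to pass from the generators to an arbitrary object of $\cD_\infty$. A priori, generating $\cD_\infty$ from $F(\cC)$ under colimits requires iterating the operation $S \mapsto \cl{S}$ transfinitely, and since each application of $\cl{(-)}$ can raise the level of cocompleteness by one (this is exactly the inclusion $\cl{\cD}_\alpha \sseq \cD_{\alpha+1}$), an unbounded number of iterations would yield no uniform bound. Presentability collapses this: $\kappa$-compact generation writes every object as one $\kappa$-filtered colimit of members of a \emph{fixed} small set $P$ sharing the common cocompleteness level $\alpha_0$. The only routine points left to verify are that $P$ may be taken small, so the supremum is a small ordinal, and that the colimits in question are computed identically in $\cD_\infty$ and in $\cD$, the latter because $\cD_\infty$ is closed under colimits in $\cD$.
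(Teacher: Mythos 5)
Your proof is correct and follows essentially the same route as the paper's: identify the $\infty$-cocomplete objects with the presentable subcategory $\cD_\infty$, take a small set of $\kappa$-compact generators, bound their cocompleteness level by a small ordinal $\alpha_0$, and conclude via the sandwich $\cl{\cD}_{\alpha_0} \sseq F_{\alpha_0}(\cC_{\alpha_0}) \sseq \cD_{\alpha_0+1}$ from \Cref{Cmpl2Free} and \Cref{Free2Cmpl}. The only cosmetic difference is that you justify the upward monotonicity of cocompleteness explicitly via \Cref{Coloc_Seq_Stabilize}, a point the paper leaves implicit when it chooses $\alpha_1$ with $S \sseq \cD_{\alpha_1}$.
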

\begin{proof}
    Let $\cD_\alpha \sseq \cD$ be the full subcategory spanned by the $\alpha$-cocomplete objects. By propositions \ref{cocomplete_Is_Generated} and \ref{Gen_Presentable},
    the $\infty$-category $\cD_\infty = \bigcup_{\alpha\in\Ord}\cD_\alpha$ of $\infty$-cocomplete objects is closed under colimits in $\cD$ and is itself presentable. Hence, it is $\kappa$-compactly generated for some small regular cardinal $\kappa$. Thus, there is a small set $S\sseq \cD_\infty$ of $\kappa$-compact objects, such that each object of $\cD_\infty$ is a colimit of a $\kappa$-filtered diagram with values in $S$. In particular, we have $\cl{S} = \cD_\infty$. Let $\alpha_1 \in \Ord$ be large enough such that $S \sseq \cD_{\alpha_1}$ and hence, $\cl{\cD}_{\alpha_1} = \cD_\infty$. We get, by Propositions \ref{Free2Cmpl} and \ref{Cmpl2Free}, that 
    \(
        \cl{\cD}_{\alpha_1} \sseq \cD_{{\alpha_1}+1},
    \)
    so we can take $\alpha_0 = \alpha_1+1$.
    
\end{proof}

\subsubsection{Convergence}

We are now ready to prove our main theorem regarding the convergence of the monadic tower. 
\begin{thm}\label{Monadic_Convergence}
    Let $G\colon\cD \to \cC$ in $\Prr$ with left adjoint $F$.
    \begin{enumerate}
        \item The monadic tower of $G$ stabilizes for $\alpha\gg0$ on $\mdef{\cC_\infty} := \invlim_\alpha \cC_\alpha$.
        \item The induced functor 
            \(
                \cD \to \cC_\infty
            \)
            admits a fully faithful left adjoint $\cC_\infty \into \cD_\infty$.
        \item  The image of $\cC_\infty \into \cD$ is the subcategory generated under small colimits by $F(\cC)\sseq \cD$. 
    \end{enumerate}
\end{thm}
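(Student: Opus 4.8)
The plan is to combine the three preparatory propositions I already have into a single convergence statement. The key inputs are \Cref{cocomplete_Is_Generated}, which identifies the $\infty$-cocomplete objects $\cD_\infty$ with the subcategory generated under small colimits by $F(\cC)$, and \Cref{Uniform_Cmpl}, which supplies a uniform ordinal $\alpha_0$ at which every $\infty$-cocomplete object is already $\alpha_0$-cocomplete. The third ingredient I would use is \Cref{Coloc_Seq_Stabilize}, guaranteeing that once an object is $\alpha_0$-cocomplete, its colocalization sequence has stabilized from $\alpha_0$ onward. The idea is that $\alpha_0$ serves as a single stabilization threshold for the entire tower, not merely object-by-object.

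\medskip
\textbf{Proof of (1).}
First I would show that the comonads $\comnd_\alpha = F_\alpha G_\alpha$ stabilize for $\alpha \geq \alpha_0$. The point is that for \emph{every} $Y\in\cD$, the object $\comnd_{\alpha_0}(Y) = F_{\alpha_0}G_{\alpha_0}(Y)$ lies in $F_{\alpha_0}(\cC_{\alpha_0})\sseq \cD_\infty$, hence is $\infty$-cocomplete and therefore $\alpha_0$-cocomplete by \Cref{Uniform_Cmpl}. By \Cref{Coloc_Seq_Stabilize} the colocalization sequence of \emph{this} object stabilizes, which forces the transition maps $\comnd_\alpha(Y)\to\comnd_{\alpha+1}(Y)$ to be isomorphisms for $\alpha\ge\alpha_0$. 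Since this holds naturally in $Y$, the counit maps $\comnd_\alpha \to \comnd_{\alpha+1}$ are isomorphisms of comonads for $\alpha\ge\alpha_0$. I would then transfer this stabilization of comonads on the fixed category $\cD$ to a stabilization of the tower $\cC_\bullet$ itself: the successive functors $\cC_{\alpha+1}\to\cC_\alpha$ become equivalences once the associated monads $T_\alpha = G_\alpha F_\alpha$ stop changing, and at limit stages $\cC_\alpha = \invlim_{\beta<\alpha}\cC_\beta$ collapses onto the already-stable value. Hence the tower stabilizes for $\alpha\gg 0$ on $\cC_\infty = \invlim_\alpha\cC_\alpha$.

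\medskip
\textbf{Proof of (2) and (3).}
For (2), once the tower has stabilized at $\alpha_0$, I identify $\cC_\infty$ with $\cC_{\alpha_0}$ and the induced functor $\cD\to\cC_\infty$ with $G_{\alpha_0}$. Its left adjoint is $F_{\alpha_0}$, and I must show $F_{\alpha_0}$ is fully faithful, equivalently that the counit $F_{\alpha_0}G_{\alpha_0}\to\Id$ is an isomorphism on the essential image of $F_{\alpha_0}$ in a way that makes the adjunction idempotent. The cleanest route is to observe that the stabilization $\comnd_{\alpha_0}\iso\comnd_{\alpha_0+1}$ combined with \Cref{Free2Cmpl} (applied at stage $\alpha_0$) shows every object of $F_{\alpha_0}(\cC_{\alpha_0})$ is $\alpha_0$-cocomplete, i.e. the counit $\comnd_{\alpha_0}\to\Id$ is an isomorphism there; by \Cref{Sulyma} this is exactly full faithfulness of the relevant left adjoint. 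Statement (3) is then immediate: the image of $\cC_\infty\into\cD$ is $F_{\alpha_0}(\cC_{\alpha_0})$, whose closure under colimits is $\cD_\infty$, which by \Cref{cocomplete_Is_Generated} is precisely the subcategory generated under small colimits by $F(\cC)$.

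\medskip
\textbf{Main obstacle.}
The delicate step is passing from pointwise/natural stabilization of the comonads $\comnd_\alpha$ on $\cD$ to genuine stabilization of the tower of categories $\cC_\bullet$ and its limit $\cC_\infty$, and then correctly identifying the left adjoint of $\cD\to\cC_\infty$ as fully faithful. The subtlety is that $\cC_\infty$ is defined as an inverse limit, so I must carefully argue that the limit is computed already at the finite stage $\alpha_0$ and that the resulting adjunction is idempotent rather than merely giving a conservative right adjoint; here the interplay between \Cref{Free2Cmpl} (upper bound, giving cocompleteness of free objects) and \Cref{Cmpl2Free} (lower bound) is what pins down the essential image exactly, and the presentability assumption is essential for invoking \Cref{Uniform_Cmpl}.
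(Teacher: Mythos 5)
There is a genuine gap, and it sits exactly where you flag the ``main obstacle'': nothing in your argument actually passes from stabilization of the comonads $\comnd_\alpha$ on $\cD$ to stabilization of the tower of \emph{categories} $\cC_\bullet$. Your sentence ``the successive functors $\cC_{\alpha+1}\to\cC_\alpha$ become equivalences once the associated monads $T_\alpha = G_\alpha F_\alpha$ stop changing'' is not meaningful as stated (the $T_\alpha$ live on different categories, so ``stop changing'' presupposes the identification you are trying to prove), and the implication is not available from object-level data: knowing that the counits $\comnd_\alpha Y \to Y$ have stabilized says nothing, by itself, about whether $U_\alpha\colon \Mod_{T_\alpha}(\cC_\alpha)\to\cC_\alpha$ is eventually an equivalence. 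The paper supplies precisely the missing mechanism, and it is categorical rather than object-wise: since $\cC_{\alpha_0}\to\cC$ is conservative, $G_{\alpha_0}$ inverts the same maps as $G$ and hence factors through the coreflection $G_\infty\colon\cD\to\cD_\infty$ of \Cref{PrR_Factor_Sys} (cf.\ \Cref{Abstract_Loc}); the restriction $\cl{G}_{\alpha_0}\colon\cD_\infty\to\cC_{\alpha_0}$ is fully faithful because the counit is an isomorphism on $\alpha_0$-cocomplete objects, hence \emph{monadic} by \Cref{Idemp_Monad}; its monad agrees with $T_{\alpha_0}$ because $G_\infty F_\infty\simeq\Id$; and then \Cref{Char_Mnd} identifies $\cC_{\alpha_0+1}=\Mod_{T_{\alpha_0}}(\cC_{\alpha_0})$ with $\cD_\infty$ and $G_{\alpha_0+1}$ with $G_\infty$. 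Only after this identification does the tower stabilize (now $T_{\alpha_0+1}\simeq\Id$, so all later stages are constant), and (2), (3) become immediate because $F_\infty$ is the inclusion $\cD_\infty\into\cD$. Your toolkit (\Cref{Uniform_Cmpl}, \Cref{Coloc_Seq_Stabilize}, \Cref{Free2Cmpl}, \Cref{Cmpl2Free}) cannot produce this step, since it only controls objects of $\cD$, not the $\infty$-categories $\cC_\alpha$.

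Two of your concrete claims are moreover false. First, $\cC_\infty$ is $\cC_{\alpha_0+1}$, not $\cC_{\alpha_0}$, and $F_{\alpha_0}$ need not be fully faithful: take $G$ to be the fully faithful inclusion $\Sp_E\into\Sp$ of Bousfield-local spectra; then every object of $\cD=\Sp_E$ is $0$-cocomplete, so $\alpha_0=0$, yet $F_0=L_E$ is far from fully faithful, and the tower stabilizes only at stage $1$, on $\cC_1\simeq\Sp_E$. Second, the appeal to \Cref{Sulyma} confuses the two adjoints: the counit $\comnd_{\alpha}\to\Id$ being an isomorphism on a class of objects gives full faithfulness of the \emph{right} adjoint $G_{\alpha+1}$ on maps out of those objects, not full faithfulness of the \emph{left} adjoint $F_{\alpha+1}$; the latter requires, in addition, that $G_{\alpha_0+1}$ restricted to $\cD_\infty$ hit every $T_{\alpha_0}$-algebra, which is again exactly what \Cref{Char_Mnd} delivers. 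Your own phrasing of (3) --- the image is $F_{\alpha_0}(\cC_{\alpha_0})$, ``whose closure under colimits is $\cD_\infty$'' --- concedes the same point: the theorem needs the image to \emph{equal} $\cD_\infty$, not merely to generate it. (A smaller, fixable issue: your deduction that the maps $\comnd_\alpha Y\to\comnd_{\alpha+1}Y$ are isomorphisms does not follow from \Cref{Coloc_Seq_Stabilize} applied to $\comnd_{\alpha_0}Y$, since that controls $\comnd_\beta\comnd_{\alpha_0}Y$ rather than $\comnd_\beta Y$; the correct route is to note that $\epsilon\comnd_{\alpha_0}$ being a natural isomorphism makes $\comnd_{\alpha_0}$ idempotent, so its resolution is essentially constant and \Cref{Super_Sulyma} gives $\comnd_{\alpha_0}\iso\comnd_{\alpha_0+1}$.)
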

\begin{proof}
    Let $\cD_\infty \sseq \cD$ be the subcategory generated by $F(\cC)\sseq \cD$ under colimits. By \Cref{cocomplete_Is_Generated}, $\cD_\infty$ is also the full subcategory spanned by the  $\infty$-complete objects. Moreover, by \Cref{Uniform_Cmpl}, there exists an ordinal $\alpha \in \Ord$, such that every $\infty$-complete object is already $\alpha$-complete. Consider now the functor $G_\alpha \colon \cD \to \cC_\alpha$. Since composition of conservative functors is conservative, the functor $\cC_\alpha \to \cC$ is conservative. Hence, the functor $G_\alpha$ inverts the same morphisms as the functor $G$. It follows that we have a factorization of $G_\alpha$ as
    \[
        \cD \oto{G_\infty} 
        \cD_\infty \oto{\cl{G}_\alpha} 
        \cC_\alpha,
    \]
    with corresponding left adjoints $F_\infty$ and $\cl{F}_\alpha$. By construction, the $\alpha$-th counit map 
    \(
        F_\alpha G_\alpha(Y) \to Y
    \)
    is an isomorphism for all $Y\in \cD_\infty$ and hence, so is the counit map $\cl{F}_\alpha \cl{G}_\alpha(Y) \to Y$. We deduce that $\cl{G}_\alpha$ is \textit{fully faithful} and so by \Cref{Idemp_Monad}, it is \textit{monadic}. Namely, $\cl{G}_\alpha$ induces an equivalence between $\cD_\infty$ and the $\infty$-category of algebras over the monad $\cl{G}_\alpha \cl{F}_\alpha$ on $\cC_\alpha$. However, since $G_\infty$ is a coreflection, we have an equivalence of monads
    \[
        \cl{G}_\alpha \cl{F}_\alpha \iso
        \cl{G}_\alpha (G_\infty F_\infty) \cl{F}_\alpha \simeq
        G_\alpha F_\alpha.
    \]
    Thus, 
    $G_{\alpha+1}\colon \cD \to \cC_{\alpha+1}$
    factors as (see \Cref{Char_Mnd})
    \[
        \cD \oto{G_\infty} 
        \cD_\infty \iso
        \cC_{\alpha+1}.
    \]
    Since the equivalence $\cD_\infty \iso \cC_{\alpha+1}$ is in particular fully faithful, the tower stabilizes from this point, by the same argument as before, and we get claim (1) and  $\cC_\infty \simeq \cC_{\alpha+1}$. Finally, under the equivalence $\cD_\infty \iso \cC_\infty$, the induced functor $\cD \to \cC_\infty$ is exactly $G_\infty$, which proves (2) and (3).
\end{proof}

\begin{cor}\label{Trans_Monad}
    Let $G\colon\cD \to \cC$ in $\Prr$. If $G$ is conservative, its monadic tower $\up{\cC}{\bullet}$ stabilizes and gives an equivalence 
    \[
        \up{G}{\alpha}\colon \cD \iso \up{\cC}{\alpha}
    \]
    for all $\alpha \gg 0$.
\end{cor}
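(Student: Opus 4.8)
The plan is to deduce this directly from the Monadic Convergence theorem (\Cref{Monadic_Convergence}) together with the conservativity criterion, since the substantive analytic work has already been carried out there. First I would recall, via \Cref{Conserv_Generates}, that the conservativity of $G$ is equivalent to the essential image $F(\cC)$ generating $\cD$ under small colimits; equivalently, by \Cref{cocomplete_Is_Generated}, every object of $\cD$ is $\infty$-cocomplete. In the notation of \Cref{Monadic_Convergence}, this says precisely that $\cD_\infty = \cD$, where $\cD_\infty \sseq \cD$ is the subcategory generated under colimits by $F(\cC)$.

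Next I would apply \Cref{Monadic_Convergence}. Part (1) yields an ordinal $\alpha_0$ beyond which the tower stabilizes, with $\cC_\alpha \simeq \cC_\infty$ for all $\alpha \ge \alpha_0$. Part (2) provides a fully faithful left adjoint $\cC_\infty \into \cD$ of the induced functor $\cD \to \cC_\infty$, and by part (3) its essential image is exactly $\cD_\infty$. Having observed above that $\cD_\infty = \cD$, this fully faithful functor is also essentially surjective, hence an equivalence. Its right adjoint $\cD \to \cC_\infty$, which is by construction the coreflection $G_\infty$, is therefore an equivalence as well (being the inverse of an equivalence).

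Finally I would unwind the stabilization to recover the statement at each level. For every $\alpha \ge \alpha_0$ the transition functor $\cC_\alpha \to \cC_\infty$ is an equivalence, and $G_\alpha \colon \cD \to \cC_\alpha$ agrees with $G_\infty$ up to composition with this equivalence; hence $G_\alpha$ is an equivalence for all $\alpha \gg 0$, as claimed.

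I do not anticipate a genuine obstacle here, as all the delicate content—the uniform cocompleteness bound, the identification of $\cC_\infty$ with a stabilized term $\cC_{\alpha+1}$, and the full faithfulness of the coreflection—is packaged into \Cref{Monadic_Convergence}. The only point demanding care is bookkeeping: confirming that the functor $\cD \to \cC_\infty$ furnished by part (2) is indeed the coreflection $G_\infty$ and that the stabilized terms $\cC_\alpha$ are identified with $\cC_\infty$ compatibly with the $G_\alpha$, both of which are already built into the statement and proof of the main theorem.
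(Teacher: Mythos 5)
Your proposal is correct, and its core is the same as the paper's: both deduce the corollary immediately from \Cref{Monadic_Convergence}, with all the real work already packaged there. The only divergence is in the closing step. The paper's proof is slightly more economical: it observes that since $G$ factors through $G_\alpha$, conservativity of $G$ forces $G_\alpha$ to be conservative, and a conservative functor admitting a fully faithful left adjoint (a conservative coreflection) is automatically an equivalence; this needs neither part (3) of the theorem nor \Cref{Conserv_Generates}. You instead use \Cref{Conserv_Generates} (equivalently, \Cref{cocomplete_Is_Generated}) to identify $\cD_\infty = \cD$, so that the fully faithful left adjoint $\cC_\infty \into \cD$ of part (2) is essentially surjective by part (3), hence an equivalence, whence its right adjoint $G_\infty$ and all the stabilized $G_\alpha$ are equivalences. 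Both mechanisms are sound and essentially dual ways of finishing: the paper checks that the counit is invertible using conservativity, while you check essential surjectivity of the left adjoint using generation; your version has the small expository benefit of making the identification $\cD_\infty = \cD$ explicit.
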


\begin{proof}
    By \Cref{Monadic_Convergence}, the monadic tower of $G$ stabilizes at some $\alpha\in\Ord$ and the induced functor $G_\alpha \colon \cD \to \cC_\alpha$ is a coreflection. Since $G$ is conservative so is $G_\alpha$ and a conservative coreflection is an equivalence. 
\end{proof}

Since monadic functors are conservative right adjoints and the latter are closed under composition, conservativity is clearly a \textit{necessary} condition for a functor to be a composition of monadic functors. \Cref{Trans_Monad} implies that for presentable $\infty$-categories this condition is also \textit{sufficient}.

The next corollary was suggested to me by Tomer Schlank. Loosely speaking, it says that every presentable $\infty$-category can be viewed as an $\infty$-category of spaces with an \textit{essentially algebraic structure}.
\begin{cor}
    For every presentable $\infty$-category $\cC$, there is an $\alpha \in \Ord$ and a tower 
    \[
        \cC_\bullet \colon 
        \Ord_{\le \alpha}^\op \longrightarrow \Prr,
    \]
    such that
\begin{enumerate}
    \item $\cC_0 = \Spc$ and $\cC_\alpha = \cC$.
    
    \item For every ordinal $\beta < \alpha$, the $\infty$-category $\cC_{\beta+1}$ is monadic over $\cC_\beta$.
    
    \item For every limit ordinal $\beta\le \alpha$, we have 
    $\cC_\beta \simeq \lim_{\gamma < \beta} \cC_\gamma.$ 
\end{enumerate}
\end{cor}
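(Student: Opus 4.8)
The plan is to deduce this from the Transfinite Monadicity corollary (\Cref{Trans_Monad}). That result takes as input a \emph{conservative} right adjoint $G\colon \cC \to \Spc$ between presentable $\infty$-categories and produces its monadic tower, which by construction has base $\cC_0 = \Spc$, has successive stages $\cC_{\beta+1} = \Mod_{T_\beta}(\cC_\beta)$ that are (tautologically) monadic over $\cC_\beta$, has limit stages $\cC_\beta \simeq \invlim_{\gamma<\beta}\cC_\gamma$, and which stabilizes at some $\alpha \gg 0$ with $G_\alpha\colon \cC \iso \cC_\alpha$. Transporting along this equivalence so that $\cC_\alpha = \cC$, and restricting the tower to $\Ord_{\le\alpha}^\op$, yields a diagram satisfying exactly (1), (2) and (3). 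Thus the only real content of the corollary is the existence of a conservative right adjoint from an arbitrary presentable $\cC$ to $\Spc$.

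To produce such a functor, first I would use presentability to choose a small set $S\sseq \cC$ of objects generating $\cC$ under colimits, and set $W := \coprod_{Z\in S} Z \in \cC$. The corepresentable functor $G := \Map_\cC(W,-)\colon \cC \to \Spc$ lands in small spaces (a small product of small spaces), preserves all limits, and is accessible; hence by the adjoint functor theorem it lies in $\Prr$. Concretely, its left adjoint is the copower $K \mapsto K \otimes W$, using that a presentable $\infty$-category is tensored over $\Spc$.

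It then remains to verify that $G$ is conservative. Since $W$ is a coproduct, there is a natural equivalence $\Map_\cC(W,-) \simeq \prod_{Z\in S}\Map_\cC(Z,-)$, and a product of maps of spaces is an equivalence if and only if each factor is. Hence a morphism $f$ of $\cC$ is inverted by $G$ if and only if it is inverted by every $\Map_\cC(Z,-)$ with $Z\in S$. By \Cref{Gen_Crit}, the fact that $S$ generates $\cC$ under colimits is equivalent to the family $\{\Map_\cC(Z,-)\}_{Z\in S}$ being jointly conservative, so $G$ inverts only equivalences. Applying \Cref{Trans_Monad} to this $G$ then completes the proof.

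The argument is short precisely because all of the serious work is already packaged in \Cref{Trans_Monad}; the single genuine step is the construction of a conservative right adjoint to $\Spc$, and the only point requiring care is recognizing that a coproduct of colimit-generators corepresents a functor that is simultaneously a right adjoint and conservative. I do not anticipate a real obstacle here, provided one invokes presentability twice---once for the small generating set $S$ and once for the tensoring over $\Spc$ that furnishes the left adjoint $-\otimes W$.
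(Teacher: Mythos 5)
Your strategy coincides with the paper's own proof: reduce via \Cref{Trans_Monad} to producing a conservative right adjoint $G\colon \cC \to \Spc$, and build $G$ from a small generating set $S$ by corepresentables. Your functor $\Map_\cC(\coprod_{Z\in S} Z, -)$ is canonically equivalent to the paper's $\prod_{Z\in S}\Map_\cC(Z,-)$, and your identification of its left adjoint as $K\mapsto K\otimes W$ is correct. The gap is the assertion that ``a product of maps of spaces is an equivalence if and only if each factor is.'' The ``only if'' direction is false once empty factors are allowed: if one factor is $\emptyset\to\emptyset$, the entire product is $\emptyset\to\emptyset$, an equivalence, regardless of the other factors. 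So joint conservativity of the family $\{\Map_\cC(Z,-)\}_{Z\in S}$, which is all that \Cref{Gen_Crit} provides, does not imply conservativity of the single functor $\Map_\cC(\coprod_{Z\in S}Z,-)$; and since mapping spaces in an unpointed presentable $\infty$-category can be empty, this is a genuine gap, not a cosmetic one.

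In fact the gap cannot be repaired, because the corollary fails in this generality. Take $\cC = \Spc\times\Spc$ with $S=\{(\ast,\emptyset),(\emptyset,\ast)\}$: then $W=(\ast,\ast)$ and your $G$ is $(X,Y)\mapsto X\times Y$, which inverts every map $(\emptyset,Y)\to(\emptyset,Y')$. Worse, \emph{any} right adjoint $\Spc\times\Spc\to\Spc$ has a colimit-preserving left adjoint, which is determined by its value $(P,Q)$ on the point; hence the right adjoint is of the form $\Map(P,-)\times\Map(Q,-)$, and no such functor is conservative: if $P\not\simeq\emptyset$ it sends every object $(\emptyset,Y)$ to $\emptyset$, and if $P\simeq\emptyset$ it is independent of the first coordinate. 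Since a tower as in the statement would exhibit the composite $\cC=\cC_\alpha\to\cC_0=\Spc$ as a conservative right adjoint (monadic functors are conservative, conservativity is closed under composition, and equivalences in a limit of $\infty$-categories are detected by the projections), no such tower can exist for $\Spc\times\Spc$. For what it is worth, the paper's own proof commits exactly the same error, citing \Cref{Gen_Crit} to conclude that $\prod_{Z\in S}\Map(Z,-)$ is conservative, so you have faithfully reproduced its argument, flaw included. Both the statement and the argument become correct if $\cC$ is pointed (then all mapping spaces are nonempty, and for maps between nonempty spaces the product claim does hold), or if the base of the tower is taken to be the power $\Spc^S$ rather than $\Spc$, against which $X\mapsto(\Map_\cC(Z,X))_{Z\in S}$ is indeed a conservative right adjoint.
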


\begin{proof}
    By \Cref{Trans_Monad}, it suffices to construct a conservative right adjoint functor $G\colon\cC \to \Spc$, as then we can take its monadic tower. Since $\cC$ is presentable, it is $\kappa$-compactly generated for some $\kappa$. Thus, there is a small set $S \sseq \cC$ of $\kappa$-compact object, which generates $\cC$ under colimits. It follows, by \Cref{Gen_Crit}, that 
    \[
        \mdef{G} := \prod_{X\in S} \Map(X,-) \colon \cC \longrightarrow \Spc
    \]
    is a conservative right adjoint functor. 
\end{proof}

Consider now a general, not necessarily conservative, functor $G\colon \cC \to \cD$ in $\Prr$. By \Cref{PrR_Factor_Sys}, the functor $G$ admits a coreflection-conservative factorization
\[
    \cD \oto{G_\infty} \cD_\infty \oto{\;\cl{G}\;} \cC.
\]
The left adjoint $F_\infty$ of $G_\infty$ is fully faithful and the idempotent comonad $M_\infty = F_\infty G_\infty$ on $\cD$ provides a coreflection onto $F_\infty (\cD_\infty) \sseq \cD$.
\Cref{Monadic_Convergence} provides the following ``formula'' for this coreflection.

\begin{cor}\label{Iterated_Colimit_Functorial}
    Let $G\colon \cC \to \cD$ in $\Prr$ with a coreflection-conservative factorization
    \[
        \cD \oto{G_\infty} \cD_\infty \oto{\;\cl{G}\;} \cC.
    \]
    There exists $\alpha_0 \in \Ord$, such that for every $Y\in\cD$, the colocalization sequence
    \[
        \comnd_0 Y \lto \comnd_1 Y \lto \dots \lto \comnd_\alpha Y \lto \dots
        \qin \cD
    \]
    stabilizes on $\comnd_\infty Y$ for $\alpha \ge \alpha_0$.
\end{cor}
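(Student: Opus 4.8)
The plan is to read the statement off from the stabilization of the monadic tower established in \Cref{Monadic_Convergence}, using that the colocalization sequence of $Y$ is precisely the sequence of comonad values $\comnd_\alpha Y = F_\alpha G_\alpha(Y)$, and that these comonads stabilize \emph{as endofunctors} of $\cD$ as soon as the tower of categories does. Thus the whole point is to upgrade the stabilization of the tower (a statement about $\infty$-categories) to a stabilization of the sequence $\comnd_\alpha Y$ (a statement about objects), uniformly in $Y$.

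First I would apply \Cref{Monadic_Convergence}(1) to fix an ordinal $\alpha_0\in\Ord$ past which the tower is constant: for every $\beta\ge\alpha_0$ the transition functor $U_\beta\colon\cC_{\beta+1}\to\cC_\beta$ is an equivalence and $\cC_\beta\simeq\cC_\infty\simeq\cD_\infty$. (One may take for $\alpha_0$ the uniform cocompleteness bound of \Cref{Uniform_Cmpl}, which is exactly where uniformity in $Y$ is secured.) Passing to adjoints, when $U_\beta$ is an equivalence the associated monad satisfies $T_\beta\iso\Id_{\cC_\beta}$, and one finds $G_{\beta+1}=\mnd{G}_\beta\simeq U_\beta^{-1}G_\beta$ together with $F_{\beta+1}=\mnd{F}_\beta\simeq F_\beta U_\beta$, whence
\[
    \comnd_{\beta+1} = F_{\beta+1}G_{\beta+1} \simeq F_\beta U_\beta U_\beta^{-1} G_\beta \simeq F_\beta G_\beta = \comnd_\beta
\]
as comonads on $\cD$, compatibly with the transition map $\comnd_\beta\to\comnd_{\beta+1}$ supplied by \Cref{Super_Sulyma} and with the counits to $\Id_\cD$. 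Hence $\comnd_\beta\iso\comnd_\infty:=F_\infty G_\infty$ for all $\beta\ge\alpha_0$.

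Because this is an isomorphism of \emph{functors} $\cD\to\cD$, evaluating at any fixed $Y$ shows that every transition map $\comnd_\beta Y\to\comnd_{\beta+1}Y$ in the colocalization sequence is an isomorphism for $\beta\ge\alpha_0$; and at a limit ordinal $\alpha>\alpha_0$ the term $\comnd_\alpha Y$ is, by its very construction, the colimit over $\beta<\alpha$ of a diagram that is eventually the constant value $\comnd_\infty Y$, hence equal to $\comnd_\infty Y$. Therefore the sequence stabilizes on $\comnd_\infty Y = F_\infty G_\infty(Y)$ from $\alpha_0$ on, with $\alpha_0$ independent of $Y$.

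The hard part is the \emph{uniformity} of $\alpha_0$ over all $Y$ — which is precisely why this is a corollary of \Cref{Monadic_Convergence} rather than a self-standing argument. An object-by-object stabilization would already follow from \Cref{Coloc_Seq_Stabilize} applied to $F_\infty G_\infty(Y)$, which is $\infty$-cocomplete by \Cref{cocomplete_Is_Generated}; but extracting a single ordinal that works for every $Y$ requires the stabilization of the entire tower of categories, and this rests on the uniform cocompleteness bound of \Cref{Uniform_Cmpl}, itself a consequence of presentability through $\kappa$-compact generation. With that input in hand the corollary is a formal repackaging of \Cref{Monadic_Convergence}, and no further estimates are needed.
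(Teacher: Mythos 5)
Your proof is correct and takes essentially the same route as the paper: both read the statement off \Cref{Monadic_Convergence}, with the uniformity of $\alpha_0$ in $Y$ secured by presentability through \Cref{Uniform_Cmpl}. The paper's own proof packages the last step more directly---once $G_\alpha$ is identified with the abstract coreflection $G_\infty$ for $\alpha\ge\alpha_0$, the counit $\comnd_\alpha Y \to Y$ exhibits $\comnd_\alpha Y$ as the coreflection of $Y$ onto $\cD_\infty$, so $\comnd_\alpha Y \iso \comnd_\infty Y$ over $Y$ by the universal property of coreflections---which makes your explicit bookkeeping of transition functors, their adjoints, and the separate treatment of successor and limit ordinals unnecessary (and avoids the slight off-by-one in your parenthetical: at the bound of \Cref{Uniform_Cmpl} itself, the transition functor is only fully faithful, not yet an equivalence, though the comonad has already stabilized).
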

\begin{proof}
    By \Cref{Monadic_Convergence}, there exists an $\alpha_0\in\Ord$, such that the abstract coreflection $\cD \to \cD_\infty$ is canonically identified with the functor $\cD \to \cC_{\alpha}$, induced by the monadic tower of $G$, for all $\alpha \ge \alpha_0$. Thus, for those $\alpha$-s, the counit maps $\comnd_\alpha Y \to Y$ exhibit $\comnd_\alpha Y$ as the coreflection of $Y$ onto the essential image of $\cD_\infty$. Hence, we get $M_\alpha Y \iso M_\infty Y$ over $Y$, for all $\alpha \ge \alpha_0$.
\end{proof}

Since $F_\infty(\cD_\infty)\sseq \cD$ is the subcategory generated by $F(\cC)$ under colimits, one should think of the coreflection map $\comnd_\infty Y \to Y$ as a universal left approximation of $Y$ by an \textit{iterated} colimit of objects in $F(\cC)$.
Recall that for every $\alpha\in\Ord$, the object $M_\alpha Y$, in position $\alpha$ in the colocalization sequence, is canonically a colimit of a diagram of objects in the essential images of the $F_\beta$-s for $\beta < \alpha$ (see \Cref{Functorial_Colimit}). Thus, \Cref{Iterated_Colimit_Functorial} provides a presentation of $\comnd_\infty Y$ as an iterated colimit of objects in $F(\cC)$, which is canonical and functorial in $Y\in\cD$.

\subsubsection{Homology localization tower}

As always, we can also consider the dual situation. Namely, given a functor $F\colon\cC \to \cD$ in $\Prl$, we can apply the dual of \Cref{Def_Monadic_Tower}, to produce the \tdef{comonadic tower} of $F$, which consists of left adjoints $\mdef{F_\alpha} \colon \cC \to \cD_\alpha$. Similarly, for every $X\in\cC$, the dual of \Cref{Def_Coloc_Sequence} gives the \tdef{long localization tower}
\[
    \begin{tikzcd}
    	& X \\
    	\dots & {T_\alpha X} & \dots & {T_2X} & {T_1X} & {T_0X,}
    	\arrow[from=2-5, to=2-6]
    	\arrow[from=2-2, to=2-3]
    	\arrow[from=2-1, to=2-2]
    	\arrow[from=1-2, to=2-2]
    	\arrow[curve={height=-9pt}, from=1-2, to=2-5]
    	\arrow[curve={height=-12pt}, from=1-2, to=2-6]
    	\arrow[from=2-4, to=2-5]
    	\arrow[from=2-3, to=2-4]
    	\arrow[curve={height=-6pt}, from=1-2, to=2-4]
    \end{tikzcd}
\]
where $\mdef{T_\alpha} := G_\alpha F_\alpha$ are the associated monads on $\cC$.
In view of \Cref{PrR_Epi_Mono}, it seems reasonable to expect that suitable variants of the results of the proceeding section carry over to this dual setting. For specific examples, one can verify by hand that this is indeed the case. We shall recall the one prototypical such example, of Farjoun and Dwyer, which inspired this work.

\begin{example}[Long homology localization tower]\label{Ex_Homology_Loc_Tower}
    For $R\in\alg(\Sp)$, let $R[-]$ denote the composition of the functors 
    \[
        \begin{tikzcd}
        	\Spc & \Sp & \Mod_R(\Sp).
        	\arrow["{\Sigma^\infty_+}", from=1-1, to=1-2]
        	\arrow["{R\otimes(-)}", from=1-2, to=1-3]
        \end{tikzcd}
    \]
    A map of spaces is sent to an isomorphism by $R[-]$, if and only if it is an $R$-homology equivalence. Thus, $R[-]$ factors through $R$-localization $L_R \colon \Spc \to \Spc_R$  (\Cref{Ex_Homology_Loc}). 
    For every space $X\in\Spc$, we get an associated localization tower $X \to T_\bullet X$ of spaces under $X$, which factors through $L_R X$. The first two terms of this tower have familiar descriptions
    \[
        T_0X = \Omega^\infty R[X] \quad,\quad
        T_1 X = \widehat{X}_R,
    \]
    where $\widehat{X}_R$ is the $R$-nilpotent completion of $X$ (\Cref{Unstable_Nil_Completion}). 
    If $X$ is \textit{$R$-good}, then (by definition) the canonical map $L_R X \to \widehat{X}_R$ is an isomorphism and the tower stabilizes. If however $X$ is \textit{$R$-bad}, the tower might not stabilize at $\alpha=1$. 
\end{example}

In \cite{dror1977long}, Dwyer and Farjoun construct the homology localization tower of $X$ with respect to $R$ (for $R\sseq \QQ$ or $R=\FF_p$), without explicit reference to monadic resolutions. Instead, they exploit the fact that the limit of a cosimplicial diagram is isomorphic to the limit of the underling \textit{semi}-cosimplicial diagram. Thus, for a monad $T$, the underling semi-cosimplicial diagram of a $T$-resolution of an object depends only on the unit augmentation $\Id \to T$ and not on the multiplication map $T^2\to T$. In this way, they were able to construct the localization tower directly, using only the augmentations $\Id \to T_\alpha$, using the explicit limit formula for $T_\alpha$ in terms of the $T_\beta$-s with $\beta<\alpha$.\footnote{Note, however, that one can not get the associated monadic tower of \textit{$\infty$-categories} in such a way.} Furthermore, they show in \cite[Proposition 1.2]{dror1977long}, that for every space $X$, there exists $\alpha \in \Ord$, such that the localization tower of $X$ stabilizes at step $\alpha$ on $L_R X$. The proof goes by an explicit analysis of the algebraic structure of the fundamental group and the higher homotopy groups as modules over it.

\bibliographystyle{alpha}
\phantomsection\addcontentsline{toc}{section}{\refname}
\bibliography{TransMonad}

\begin{thebibliography}{BMW12}

\bibitem[AHT89]{adamek1989monadic}
Ji{\v{r}}{\'\i} Ad{\'a}mek, Horst Herrlich, and Walter Tholen.
\newblock Monadic decompositions.
\newblock {\em Journal of Pure and Applied Algebra}, 59(2):111--123, 1989.

\bibitem[AR89]{adamek1989reflections}
Ji{\v{r}}{\'\i} Ad{\'a}mek and Ji{\v{r}}{\'\i} Rosick{\`y}.
\newblock Reflections in locally presentable categories.
\newblock {\em Archivum Mathematicum}, 25(1):89--94, 1989.

\bibitem[AT70]{applegate1970iterated}
Harry Applegate and Myles Tierney.
\newblock Iterated cotriples.
\newblock In {\em Reports of the Midwest Category Seminar IV}, pages 56--99.
  Springer, 1970.

\bibitem[BK72]{bousfield1972homotopy}
Aldridge~K Bousfield and Daniel~M Kan.
\newblock {\em Homotopy limits, completions and localizations}, volume 304.
\newblock Springer Science \& Business Media, 1972.

\bibitem[BMW12]{berger2012monads}
Clemens Berger, Paul-Andr{\'e} Mellies, and Mark Weber.
\newblock Monads with arities and their associated theories.
\newblock {\em Journal of Pure and Applied Algebra}, 216(8-9):2029--2048, 2012.

\bibitem[Bor94]{borceux1994handbook}
Francis Borceux.
\newblock {\em Handbook of Categorical Algebra: Volume 2, Categories and
  Structures}, volume~2.
\newblock Cambridge University Press, 1994.

\bibitem[Bou75]{bousfield1975localization}
Aldridge~K Bousfield.
\newblock The localization of spaces with respect to homology.
\newblock {\em Topology}, 14(2):133--150, 1975.

\bibitem[Bou79]{bousfield1979localization}
Aldridge~K Bousfield.
\newblock The localization of spectra with respect to homology.
\newblock {\em Topology}, 18(4):257--281, 1979.

\bibitem[Bou92]{bousfield1992completion}
Aldridge~K Bousfield.
\newblock {On the p-adic completions of nonnilpotent spaces}.
\newblock {\em Transactions of the American Mathematical Society},
  331(1):335--359, 1992.

\bibitem[BS10]{baez2010lectures}
John~C Baez and Michael Shulman.
\newblock {Lectures on $n$-categories and cohomology}.
\newblock In {\em Towards higher categories}, pages 1--68. Springer, 2010.

\bibitem[BT90]{borger1990total}
Reinhard B{\"o}rger and Walter Tholen.
\newblock Total categories and solid functors.
\newblock {\em Canadian Journal of Mathematics}, 42(2):213--229, 1990.

\bibitem[CF99]{casacuberta1999localizations}
Carles Casacuberta and Armin Frei.
\newblock Localizations as idempotent approximations to completions.
\newblock {\em Journal of Pure and Applied Algebra}, 142(1):25--33, 1999.

\bibitem[DD77]{dror1977long}
Emmanuel Dror and William~G Dwyer.
\newblock A long homology localization tower.
\newblock {\em Commentarii Mathematici Helvetici}, 52(1):185--210, 1977.

\bibitem[Dub70]{dubuc2006kan}
Eduardo~J Dubuc.
\newblock {\em Kan extensions in enriched category theory}, volume 145.
\newblock Springer, 1970.

\bibitem[Fre72]{freyd1972aspects}
Peter Freyd.
\newblock Aspects of topoi.
\newblock {\em Bulletin of the Australian Mathematical Society}, 7(1):1--76,
  1972.

\bibitem[GGN16]{gepner2016universality}
David Gepner, Moritz Groth, and Thomas Nikolaus.
\newblock Universality of multiplicative infinite loop space machines.
\newblock {\em Algebraic \& Geometric Topology}, 15(6):3107--3153, 2016.

\bibitem[GHK17]{gepner2017infty}
David Gepner, Rune Haugseng, and Joachim Kock.
\newblock {$\infty $-Operads as Analytic Monads}.
\newblock {\em arXiv preprint arXiv:1712.06469}, 2017.

\bibitem[Hau20]{haugseng2020lax}
Rune Haugseng.
\newblock {On lax transformations, adjunctions, and monads in $(\infty,
  2)$-categories}.
\newblock {\em arXiv preprint arXiv:2002.01037}, 2020.

\bibitem[Hei17]{heine2017equivalence}
Hadrian Heine.
\newblock About the equivalence between monads and monadic functors.
\newblock {\em arXiv preprint arXiv:1712.00555}, 2017.

\bibitem[HY17]{horev2017conjugates}
Asaf Horev and Lior Yanovski.
\newblock On conjugates and adjoint descent.
\newblock {\em Topology and its Applications}, 232:140--154, 2017.

\bibitem[Lei13]{leinster2013codensity}
Tom Leinster.
\newblock Codensity and the ultrafilter monad.
\newblock {\em Theory and Applications of Categories}, 28(13):332--370, 2013.

\bibitem[Lura]{HA}
Jacob Lurie.
\newblock Higher algebra.
\newblock {http://www.math.harvard.edu/~lurie/}.

\bibitem[Lurb]{SAG}
Jacob Lurie.
\newblock Spectral algebraic geometry.
\newblock {http://www.math.harvard.edu/~lurie/}.

\bibitem[Lur09a]{htt}
Jacob Lurie.
\newblock {\em Higher topos theory}, volume 170 of {\em Annals of Mathematics
  Studies}.
\newblock Princeton University Press, Princeton, NJ, 2009.

\bibitem[Lur09b]{lurie2009infinity}
Jacob Lurie.
\newblock {$(\infty, 2)$-Categories and the Goodwillie Calculus I}.
\newblock {\em arXiv preprint arXiv:0905.0462}, 2009.

\bibitem[Lur18]{kerodon}
Jacob Lurie.
\newblock Kerodon.
\newblock \url{https://kerodon.net}, 2018.

\bibitem[MNN17]{mathew2017nilpotence}
Akhil Mathew, Niko Naumann, and Justin Noel.
\newblock Nilpotence and descent in equivariant stable homotopy theory.
\newblock {\em Advances in Mathematics}, 305:994--1084, 2017.

\bibitem[MS82]{macdonald1982tower}
John~L MacDonald and Arthur Stone.
\newblock The tower and regular decomposition.
\newblock {\em Cahiers de topologie et g{\'e}om{\'e}trie diff{\'e}rentielle
  cat{\'e}goriques}, 23(2):197--213, 1982.

\bibitem[Rav84]{ravenel1984localization}
Douglas~C Ravenel.
\newblock Localization with respect to certain periodic homology theories.
\newblock {\em American Journal of Mathematics}, 106(2):351--414, 1984.

\bibitem[RT03]{rosicky2003left}
Jir{\i} Rosick{\`y} and Walter Tholen.
\newblock Left-determined model categories and universal homotopy theories.
\newblock {\em Transactions of the American Mathematical Society},
  355(9):3611--3623, 2003.

\bibitem[RV16]{riehl2016Adjunctions}
Emily Riehl and Dominic Verity.
\newblock Homotopy coherent adjunctions and the formal theory of monads.
\newblock {\em Advances in Mathematics}, 286:802--888, 2016.

\bibitem[RV18]{riehl2018elements}
Emily Riehl and Dominic Verity.
\newblock {Elements of $\infty$-category theory}.
\newblock {\em Preprint available at www. math. jhu. edu/\~{} eriehl/elements.
  pdf}, 2018.

\bibitem[Sul17]{sulyma2017categorical}
Yuri~JF Sulyma.
\newblock {$\infty$}-categorical monadicity and descent.
\newblock {\em New York Journal of Mathematics}, 23, 2017.

\end{thebibliography}

\end{document}